\definecolor{refkey}{rgb}{0.0,0.0,1.0}
\definecolor{labelkey}{rgb}{0.0,0.0,1.0}
\theoremstyle{plain}
\newtheorem{theorem}{Theorem}[section]
\newtheorem{proposition}[theorem]{Proposition}
\newtheorem{lemma}[theorem]{Lemma}
\newtheorem{corollary}[theorem]{Corollary}
\theoremstyle{definition}
\newtheorem{assumption}[theorem]{Assumption}
\newtheorem{example}[theorem]{Example}
\theoremstyle{remark}
\newtheorem{remark}[theorem]{Remark}
\numberwithin{equation}{section}
\newcommand{\dd}{\mathrm{d}}
\newcommand{\ii}{\mathrm{i}}
\newcommand{\ee}{\mathrm{e}}
\newcommand{\Bc}{\mathcal B}
\newcommand{\Fc}{\mathcal F}
\newcommand{\Ic}{\mathcal I}
\newcommand{\Lc}{\mathcal L}
\providecommand{\Mc}{}
\renewcommand{\Mc}{\mathcal M}
\newcommand{\Nc}{\mathcal N}
\newcommand{\Sc}{\mathcal S}
\newcommand{\Xc}{\mathcal X}
\newcommand{\Zc}{\mathcal Z}
\newcommand{\D}{\mathbb D}
\newcommand{\N}{\mathbb N}
\newcommand{\R}{\mathbb R}
\newcommand{\Z}{\mathbb Z}
\newcommand{\Eb}{\mathbf{E}}
\newcommand{\Pb}{\mathbf{P}}
\newcommand{\Xb}{\mathbf{X}}
\newcommand{\Xs}{\mathscr X}
\newcommand{\Zs}{\mathscr Z}
\newcommand{\mf}{\mathfrak m}
\newcommand{\1}{\mathbf 1}
\renewcommand{\epsilon}{\varepsilon}
\renewcommand{\phi}{\varphi}
\newcommand{\argmin}{\operatornamewithlimits{argmin}}
\newcommand{\KL}{\mathrm{KL}}
\newcommand{\TV}{\mathrm{TV}}
\newcommand{\Cov}{\operatorname{Cov}}
\newcommand{\defeq}{\vcentcolon=}
\newcommand{\eqdef}{=\vcentcolon}
 \newcommand{\trace}{\mathrm{tr}}
\renewcommand{\complement}{\mathsf{c}}
\newcommand{\Xbar}{\bar X}
\newcommand{\xtilde}{\widetilde x}
\newcommand{\xtildeprime}{\xtilde^\prime}
\newcommand{\Xtilde}{\widetilde X}
\newcommand{\Xtildebar}{\bar{\Xtilde}}
\newcommand{\xibar}{\overline \xi}
\newcommand{\e}{\boldsymbol{\mathrm{e}}}
\newcommand{\knast}{k_n^{\ast}}
\newcommand{\absj}{\lvert j\rvert}
\newcommand{\Zbar}{\bar Z}
\newcommand{\Zprimebarn}{\overline{Z}^\prime_n}
\newcommand{\Xbarn}{\bar X_n}
\newcommand{\xibarn}{\bar \xi_n}
\newcommand{\Ihat}{\widehat I}
\newcommand{\ahat}{\widehat a}
\newcommand{\fhat}{\widehat f}
\newcommand{\pen}{\operatorname{pen}}
\newcommand{\pentilde}{\widetilde \pen}
\newcommand{\mfhat}{\widehat \mf}
\newcommand{\mfprime}{\mf^\prime}
\newcommand{\ftilde}{\widetilde f}
\newcommand{\Acomplement}{A^\complement}
\newcommand{\privpar}{\alpha}
\newcommand{\Psinpp}{\Psi_{n,\privpar}}
\newcommand{\Laplace}{\Lc}
\newcommand{\ftheta}{f^\theta}
\newcommand{\spn}{\operatorname{span}}
\newcommand{\rbar}{\bar r}
\newcommand{\mfast}{\mf^\ast}
\newcommand{\Itilde}{\widetilde I}
\newcommand{\Gtilde}{\widetilde G}
\newcommand{\Xitilde}{\widetilde \Xi}
\newcommand{\Sbar}{\overline S}
\newcommand{\vhat}{\widehat v}
\newcommand{\kappatilde}{\widetilde \kappa}
\title[]{Adaptive spectral density estimation by model selection under local differential privacy}
\author{Martin Kroll}
\address{Ruhr-Universität Bochum\\Fakultät für Mathematik\\D-44780 Bochum}
\email{martin.kroll-k9x@rub.de}
\date{\today}
\subjclass[2010]{62G05 (primary), and 62M10 (secondary)} 
\keywords{Differential privacy, spectral density estimation, orthonormal series estimator, adaptive estimation, model selection.}
\begin{document}

\begin{abstract}
We study spectral density estimation under local differential privacy. A\-no\-ny\-mi\-zation is achieved through truncation followed by Laplace perturbation. We select our estimator from a set of candidate estimators by a penalized contrast criterion. This estimator is shown to attain nearly the same rate of convergence as the best estimator from the candidate set. A key ingredient of the proof are recent results on concentration of quadratic forms in terms of sub-exponential random variables obtained in \cite{goetze2019concentration}. We illustrate our findings in a small simulation study.
\end{abstract} 
\maketitle

\section{Introduction}

Let $X=(X_t)_{t \in \Z}$ be a stationary time series with autocorrelation function $\gamma \colon \Z \to \R$ defined via $\gamma_k = \gamma(k) = \Cov(X_t, X_{t+k})$ for any $t \in \Z$.
Then, under the assumption that the series $(\gamma_k)_{k \in \Z}$ is absolutely convergent, its spectral density $f$ can be represented by the inversion formula as
\begin{equation}\label{eq:spec:dens}
f(\omega) = \frac{1}{2\pi} \sum_{k \in \Z} \gamma_k e^{-\ii \omega k}, \qquad \omega \in [-\pi,\pi].
\end{equation}

The present paper treats the nonparametric estimation of $f$ from a finite snippet $X_{1:n} = (X_1,\ldots,X_n)$ of length $n$ under privacy constraints.
More precisely, we assume that each $X_i$, $i = 1,\ldots,n$ belongs to another data holder who is willing to publish an anonymized version $Z_i$ of the actual value $X_i$ only.
Then, the complete snippet $X_{1:n}$ is not accessible to the statistician, and estimation can be done based on the privatized snippet $Z_{1:n} = (Z_1,\ldots,Z_n)$ solely.
Such a situation might, for instance, be of relevance for so-called random walk survey designs \cite{bennett1991simplified,lemeshow1985surveys} where people are successively interviewed and the next person to be interviewed is determined by a precisely defined random walk strategy.
Hence, one might suppose that data of consecutively interviewed persons should incorporate some amount of dependency since they live nearby each other.
Since survey interviews often aim at collecting data concerning sensitive social and biological data concerning health \cite{flynn2013modified} there is certainly need for anonymization.

As our mathematical setup for privacy we use the notion of local $\privpar$-differential privacy that has gained increasing popularity in the statistics community in recent years.
Until now, theoretical research in this framework has focused on models with independent observations and estimation tasks like density estimation \cite{duchi2018minimax,butucea2020local}, estimation of functionals \cite{rohde2020geometrizing,butucea2020interactive}, testing \cite{berrett2020locally,lam-weil2020minimax,butucea2020interactive}, and classification \cite{berrett2019classification}.
To the best of our knowledge, this work is the first one that applies the concept of differential privacy to time series data and the task of estimating the dependency structure of a process under privacy restrictions. 

Of course, in the classical scenario without any privacy restriction there exists an overwhelming amount of literature on spectral density estimation from a snippet of finite length in both parametric and nonparametric models \cite{comte2001adaptive,dahlhaus1989efficient,davies1973asymptotic,efromovich1998data,fox1986large,golubev1993nonparametric,neumann1996spectral,soulier2000estimation,taniguchi1987minimum}.
From a methodological point of view we explicitly point out the paper \cite{comte2001adaptive} that is our point of origin and uses the penalized contrast approach (that we will also use later in Section~\ref{s:adaptation}) in the non-private framework.
For this reason, we give in the following a recap of this technique.

\subsection*{The non-private model selection device from \cite{comte2001adaptive} in a nutshell}

The so-called periodogram is the point of origin of many procedures for spectral density estimation but it is has to be smoothed in order to obtain consistency or even rate optimal estimators. 
The \emph{centred periodogram}, based on $n$ consecutive observations of the time series, is defined via  
\begin{equation}\label{eq:def:I_n^X}
	I_n^X(\omega) = \frac{1}{2\pi n} \left\lvert \sum_{t=1}^n (X_t - \Xbar_n) e^{-\ii t \omega} \right\rvert^2,
\end{equation}
where $\Xbar_n$ is the sample mean of the observed snippet $X_{1:n}$.

One possibility to obtain a smoothed version of the periodogram is projection of $I_n^X$ to a finite-dimensional subspace $S_\mf$ of $L^2([-\pi,\pi])$, say $S_\mf = \spn(\phi_i)_{i \in \Ic_\mf}$, where $(\phi_i)_{i \in \Ic_\mf}$ is an orthonormal basis of $L^2([-\pi,\pi])$ and $\Ic_\mf$ a finite subset.
We denote $D_{\mf} = \dim(S_\mf) = \lvert \Ic_\mf \rvert$.
The choice of a subspace $S_\mf$ might be interpreted as the choice of a finite-dimensional model $\mf$ (which explains the choice of the letter $\mf$ here).
Potential models include spaces generated by trigonometric functions, regular piecewise polynomials, general piecewise polynomials, regular compactly supported periodic wavelets, and general compactly supported periodic wavelets; see Section~2.2 in \cite{comte2001adaptive} for a detailed description of all of these models.
With such a model we associate the estimator
\begin{equation*}
	\fhat_{\mf}^X = \sum_{i \in \Ic_\mf} \widehat a_i^X \phi_i
\end{equation*}
with coefficients
\begin{equation*}
	\ahat_i^X = \int_{-\pi}^{\pi} I_n^X(\omega) \phi_i(\omega)\dd \omega.
\end{equation*}
Note that so defined estimator minimizes the quantity
\begin{equation*}
	\int_{-\pi}^{\pi} (I_n(\omega) - t(\omega))^2 \dd \omega, 
\end{equation*}
or, equivalently, the contrast $\Upsilon_n(t) = \int_{-\pi}^{\pi} t^2(\omega)\dd \omega - 2 \int_{-\pi}^{\pi} t(\omega) I_n^X(\omega) \dd \omega$, over all $t \in S_\mf$.

An upper risk bound for the estimator $\fhat^X_\mf$ can be derived from the following decomposition (defining $f_\mf$ as the projection of $f$ onto the space $S_\mf$):
\begin{align}
	\Eb  \lVert f - \fhat_\mf^X \rVert^2 &= \lVert f - f_\mf \rVert^2 + \Eb \lVert \fhat_\mf^X - f_\mf \rVert^2 \notag\\
	&\leq \lVert f - f_\mf \rVert^2 + C(f) \cdot \frac{D_ \mf}{n},\label{eq:upper:nonprivate}
\end{align}
where the inequality is taken from Equation~(5) in \cite{comte2001adaptive}.
It is based on the following assumption (Assumption~2 in \cite{comte2001adaptive}) that we will adopt for this work.
\begin{assumption}\label{ASS:GAMMA_K}
	The autocovariance function $\gamma$ of the time series $X$ is such that $\sum_{k \in \Z} \lvert \gamma_k \rvert = M < + \infty$ and $\sum_{k \in \Z} \lvert k\gamma_k^2 \rvert = M_1 < +\infty$.
\end{assumption}

From the results in \cite{comte2001adaptive} it is easy to see that the optimal rate that can be achieved for smooth spectral density functions belonging to a Sobolev space with smoothness parameter $s$ is $n^{-2s/(2s+1)}$.
However, as often in nonparametric statistics, the optimal model from a set of potential models that has to be selected to reach this rate can be chosen directly only if a priori knowledge concerning the smoothness is available.
Since such knowledge is usually not given, one has to find a method for model selection that is completely data-driven.
A by now classical method for this purpose is model selection \cite{birge1997from,barron1999risk,massart2007concentration}.
This general toolbox has been used by F. Comte in \cite{comte2001adaptive} for adaptive spectral density estimation in the non-private case where $X_{1:n}$ is observable. Her method consists in choosing a model as the minimizer $\mfhat$ of a penalized contrast criterion over a set $\Mc_n$ of potential models, that is,
\begin{equation*}
	\mfhat = \argmin_{\mf \in \Mc_n} \Upsilon_n(\fhat_\mf^X) + \pen(\mf).
\end{equation*}
Here $\Upsilon_n$ is a contrast function (for instance, the one defined above) and $\pen \colon \Mc_n \to [0,\infty)$ a penalty function that penalizes too complex potential models.
Usually, $\pen$ is a monotone function in the dimension $D_\mf$ of the space $S_\mf$.
In \cite{comte2001adaptive} it has been shown that, under sufficiently mild assumptions, the estimator $\fhat_{\mfhat}^X$ behaves nearly as well as the oracle given by the optimal model from the collection:
\begin{equation}\label{eq:oracle:comte}
	\Eb \lVert \fhat_{\mfhat}^X - f \rVert^2 \leq C_1 \inf_{\mf \in \Mc_n} \{ \lVert f - f_{\mf} \rVert^2 + \pen(\mf) \} + \frac{C_2}{n}. 
\end{equation}
Here, the constant $C_1$ is purely numerical whereas $C_2$ might depend on $f$ through its sup-norm, and additionally on quantities related to the class $\Mc_n$ of potential models (but, of course, not on $n$).
If the penalty term can be chosen of the same order as the variance term $D_\mf/n$ in \eqref{eq:upper:nonprivate} (maybe up to logarithmic factors), then the adaptive estimator attains the same rate as the best possible estimator over all potential models (up to logarithmic factors).

\subsection*{Contributions of the paper}

The principal purpose of this work is to derive an oracle inequality in the spirit of \eqref{eq:oracle:comte} when only anonymized data are available.
The main difficulty in this scenario is that the periodogram $I_n^X$ is not directly available.
Hence, one approach would be to define differentially private $Z_i$ in a way such that a suitable substitute $I_n^Z$ for $I_n^X$ can be defined in terms of the $Z_i$ only.
We introduce a procedure to define such $Z_{1:n}$ in the framework of $\privpar$-differential privacy by a combination of truncation and Laplace perturbation.
Using the privatized version of the periodogram, one can then apply the general toolbox as in the non-private case.
We first consider upper bounds in the spirit of \eqref{eq:upper:nonprivate} for projections of $I_n^Z$ to finite-dimensional spaces $S_\mf$ for fixed models $\mf$.
In the specific case where the privacy level $\privpar$ is fixed and interpreted as a constant whereas $n$ tends to $+\infty$, the rate of convergence over Sobolev ellipsoids that we obtain is the same as in the non-private setup up to logarithmic factors. 
Complementary to these upper bounds, we also state some first lower bound results that show that in some cases there might be a loss in the rate caused by the privacy level $\privpar$ when it is allowed to vary with $n$.
The main theoretical result of this paper is an oracle inequality in the spirit of \eqref{eq:oracle:comte} for private data.
For our completely data-driven estimator $\ftilde = \fhat_{\mfhat}^Z$ with the model $\mfhat$ determined via a model selection device (with a penalty that is adapted to the privacy framework), we derive
\begin{equation*}
	\Eb \lVert \ftilde - f \rVert^2 \leq C_1 \inf_{\mf \in \Mc_n} [ \lVert f - f_{\mf} \rVert^2 + \pen(\mf) ] + C_2 \max \left\lbrace \frac{1}{n}, \frac{\log^2(n)}{n^3 \privpar^4} \right\rbrace
\end{equation*}
where $\privpar$ is the privacy parameter (see Section~\ref{s:privacy} for the significance of this parameter).
In contrast to \eqref{eq:oracle:comte}, also the penalty depends on the privacy parameter $\privpar$ in our case.
However, as in the non-private setup, the adaptive estimator suffers at most from an additional loss in extra logarithmic terms in contrast to the optimal possible estimator taken from the considered collection of models.
Remarkably, recent results on the concentration of quadratic forms in sub-exponential random variables \cite{goetze2019concentration} turn out to be useful for our theoretical analysis.
From a methodological point of view the present work might be of interest since it is, to the best of the author's knowledge, the first paper where model selection has been used to perform adaptive estimation under privacy constraints (\cite{butucea2020local} uses wavelet estimators to achieve adaptation in the privacy setup).

\subsection*{Notation}

For real numbers $a,b$ we set $\llbracket a,b\rrbracket = [a,b] \cap \Z$.
By $\Laplace(b)$ we denote the Laplace distribution with parameter $b$, by $\Nc(\mu,\Sigma)$ the normal distribution with mean $\mu$ and covariance matrix $\Sigma$.
With $P_H v$ we denote the projection of a vector $v$ to some subspace $H$.
With $E_n$ we denote the $n \times n$-identity matrix, with $\boldsymbol 0_n$ the $n \times n$-zero matrix, and with $\vec c$ the $n \times 1$-vector containing only the value $c \in \R$ as entry.
$\rho(A)$ denotes the spectral radius of a matrix $A$.

For any real-valued random variable $X$ and $\beta > 0$ define the (quasi-)norm
\begin{equation*}
	\lVert X \rVert_{\psi_\beta} \defeq \inf \left\lbrace t > 0 : \Eb \exp \left( \frac{\lvert X \rvert^\beta}{t^\beta} \right) \leq 2 \right\rbrace 
\end{equation*}
(as usual, one puts $\inf \emptyset = +\infty$).
The (quasi-)norms $\lVert \cdot \rVert_{\psi_\beta}$ are called \emph{exponential Orlicz norms}.
By $\lVert \cdot \rVert$ we denote the usual $L^2$-norm, by $\lVert \cdot \rVert_{\mathrm{op}}$ the operator norm of a matrix.

We write $a_n \lesssim b_n$ if $a_n \leq C b_n$ for some purely numerical constant $C$ and all sufficiently large $n$.
Throughout the paper, $C$ denotes a generic constant whose value might change with every appearance.
By writing $C(\ldots)$ we indicate the dependence of a numerical constant on one or several parameters that are listed within the brackets.

\subsection*{Organization of the paper}
The paper is organized as follows. Section~\ref{s:privacy} introduces the notion of $\privpar$-differential privacy and we introduce our algorithm to anonymize time series data.
Section~\ref{s:minimax} is devoted to the derivation of upper risk bounds for fixed models $\mf$, and we also give some lower bound results.
In the main Section~\ref{s:adaptation} we state the oracle inequality for privatized time series data.
A small sample simulation study is presented in Section~\ref{s:sim} followed by a summary in Section~\ref{s:discussion} where we also indicate directions for further research. 
\section{Privacy}\label{s:privacy}

\subsection*{The notion of local $\privpar$-differential privacy}
Let us denote by $X_1,\ldots,X_n$ the un\-anony\-mized random variables, that is $X_{1:n} = (X_1,\ldots,X_n)$ is a snippet from the stationary time series $X$ whose spectral density is the quantity of interest.
We assume that each $X_i$ belongs to a certain data holder who does not want to publish the value $X_i$ but only an anonymized version of it, which will be denoted with $Z_i$.
A theoretical framework for formalizing the vague catchwords \emph{anonymization} and \emph{privacy} is $\privpar$-differential privacy which originally goes back to \cite{dwork2006automata} and has obtained increasing interest in the statistics community within the last decade.
There is a distinction between \emph{global} differential privacy (for instance, considered in \cite{hall2013differential}) where a trusted curator is given access to the complete data (that is, in our case, the snippet $X_{1:n}$) and a privatized version of standard estimators can be published, and \emph{local} differential privacy where the original data are anonymized directly by the data holders and estimation has to be performed using the resulting private snippet $Z_{1:n}$.
We will stick to this latter framework of local differential privacy in this paper.
Under local privacy, data are successively obtained applying appropriate Markov kernels.
More precisely, given $X_i = x_i$ and $Z_j = z_j$ for $j = 1,\ldots,i-1$, the $i$-th privatized output $Z_i$ is drawn as
\begin{equation}\label{EQ:DEF:Z_i:GEN}
	Z_i \sim Q_i (\cdot \mid X_i = x_i, Z_1=z_1,\ldots,Z_{i-1} = z_{i-1})
\end{equation}
for Markov kernels $Q_i: \Zs \times (\Xc \times \Zc^{i-1}) \to [0,1]$ with $(\Xc,\Xs)$, $(\Zc,\Zs)$ denoting the measure spaces of non-private and private data, respectively (cf.~Figure~2 in \cite{duchi2018minimax} for a representation of this sampling scheme as a graphical model).
In this paper, we propose a \emph{non-interactive} algorithm where the random value $Z_i$ depends on $X_i$ only: thus, there is no dependence on previously generated $Z_i$'s on the right-hand side of Equation~\eqref{EQ:DEF:Z_i:GEN}.
We also dispense with the dependence of $Q_i$ on $i$, that is, we consider procedures with
\begin{equation*}
	Z_i \sim Q(\cdot \, | \, X_i = x_i)
\end{equation*}
for all $i\in \llbracket 1,\ldots,n \rrbracket$.

\smallskip

The quantification of privacy is achieved via the notion of $\privpar$-differential privacy.
In our context, this notion means that the estimate
\begin{equation}\label{eq:def:privpar:privacy}
	\sup_{A \in \Zs} \frac{Q_i(A \, | \, X_i = x)}{Q_i(A \, | \, X_i = x')} \leq \exp(\privpar)
\end{equation}
is supposed to hold for all $x,x' \in \Xc$.
If there exist densities $q(z \, | \, X = x)$ for the Markov kernel for all $x \in \Xc$ it is easy to verify that condition \eqref{eq:def:privpar:privacy} is equivalent to
\begin{equation}\label{eq:def:privpar:privacy:dens}
	\sup_{z \in \Zc} \frac{q(z \, | \, X_i = x)}{q(z \, | \, X_i = x')} \leq \exp(\privpar)
\end{equation}
for all $x,x' \in \Xc$.

\subsection*{Anonymization procedure}
It well-known that adding centred Laplace distributed noise on bounded random variables with sufficiently large variance guarantees $\privpar$-differential privacy \cite{duchi2018minimax,rohde2020geometrizing}.
We will use this general technique but have to transform the $X_i$ in a first step because we do not want to impose a boundedness assumption on the $X_i$ in general since this is obviously not satisfied in the most important case of Gaussian time series.
This transformation consists in a truncation of the original $X_i$.
More precisely, we put
\begin{equation}\label{eq:def:Xtilde_i}
	\Xtilde_i = (X_i \wedge \tau_n) \vee (-\tau_n), \qquad i \in \llbracket 1,n\rrbracket
\end{equation}
where $\tau_n > 0$.
Note that the truncation can be performed locally by the data holders once all of them have agreed on the value $\tau_n$.
Our estimators are quite sensitive to the choice of the threshold $\tau_n$.
On the one hand, we want $\tau_n$ to tend to $+\infty$ in order to bound the probability that truncation occurs for at least one variable $X_i$ by the rate of convergence that we aim at.
On the other hand, $\tau_n$ arises in the rates of convergence and should be as small as possible.
For our purposes, a logarithmically increasing (in terms of the snippet length $n$) sequence $\tau_n$ will turn out to be convenient.

By construction, we trivially have $\Xtilde_i \in [-\tau_n, \tau_n]$, and the above mentioned Laplace technique can be applied on the transformed data.

\begin{lemma} The random variables
	\begin{equation}\label{eq:def:Z_i}
	Z_i = \Xtilde_i + \xi_i
	\end{equation}
	with $\xi_i \text{ i.i.d.} \sim \Laplace(2\tau_n/\privpar)$ are $\privpar$-differentially private views of the original $X_i$.
\end{lemma}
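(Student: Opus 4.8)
The plan is to verify the density-based privacy criterion~\eqref{eq:def:privpar:privacy:dens}, which is equivalent to the defining condition~\eqref{eq:def:privpar:privacy} as noted above. Since the mechanism draws $Z_i$ from a kernel $Q(\cdot\mid X_i=x)$ that depends on $X_i$ alone (and on none of the previously released outputs), it is non-interactive, so it suffices to exhibit a bound for this single kernel $Q$, uniformly over $x,x'\in\Xc$. The reason the bound will go through is that, after the preliminary truncation, the statistic whose value is perturbed takes values in the bounded interval $[-\tau_n,\tau_n]$, so its global sensitivity is at most $2\tau_n$; the Laplace scale $2\tau_n/\privpar$ is then precisely sensitivity divided by the privacy budget.

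First I would write down the conditional density of $Z_i$. Let $t\colon\R\to[-\tau_n,\tau_n]$ denote the truncation map $t(x)=(x\wedge\tau_n)\vee(-\tau_n)$, so that $\Xtilde_i=t(X_i)$ by~\eqref{eq:def:Xtilde_i}. Because $\xi_i\sim\Laplace(2\tau_n/\privpar)$ is independent of $X_i$ and $\Xtilde_i$ is a deterministic function of $X_i$, the density of $Z_i=\Xtilde_i+\xi_i$ given $X_i=x$ is
\[
q(z\mid X_i=x)=\frac{\privpar}{4\tau_n}\exp\!\left(-\frac{\privpar\,\abs{z-t(x)}}{2\tau_n}\right),\qquad z\in\R.
\]
Then, for arbitrary $x,x'\in\Xc$ and any $z\in\Zc=\R$, the reverse triangle inequality gives
\[
\frac{q(z\mid X_i=x)}{q(z\mid X_i=x')}=\exp\!\left(\frac{\privpar}{2\tau_n}\bigl(\abs{z-t(x')}-\abs{z-t(x)}\bigr)\right)\le\exp\!\left(\frac{\privpar}{2\tau_n}\,\abs{t(x)-t(x')}\right).
\]
Since $t$ takes values in $[-\tau_n,\tau_n]$ we have $\abs{t(x)-t(x')}\le 2\tau_n$, so the right-hand side is bounded by $\exp(\privpar)$; taking the supremum over $z$ and then over $x,x'$ yields exactly~\eqref{eq:def:privpar:privacy:dens}, which finishes the argument.

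There is essentially no technical obstacle here — this is the standard Laplace mechanism — and the only point worth stressing is why the truncation step in~\eqref{eq:def:Xtilde_i} is indispensable. The privacy inequality has to hold for \emph{all} pairs $x,x'\in\Xc$, including those with $\abs{x},\abs{x'}$ arbitrarily large, which genuinely occur for Gaussian time series; for the untruncated variables the analogue of $\abs{t(x)-t(x')}$ is unbounded, so no fixed Laplace scale could produce a finite likelihood ratio. Truncating first caps the sensitivity at $2\tau_n$, at the price of the bias caused by the event $\{X_i\neq\Xtilde_i\}$, whose probability is controlled later on by letting $\tau_n\to\infty$ at a logarithmic rate.
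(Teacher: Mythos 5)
Your argument is correct and follows essentially the same route as the paper's proof: verify the density criterion \eqref{eq:def:privpar:privacy:dens}, write out the Laplace conditional density of $Z_i$ given $X_i = x$, apply the reverse triangle inequality, and bound the resulting exponent by $\privpar\lvert \xtilde - \xtildeprime\rvert/(2\tau_n) \leq \privpar$ using that the truncated values lie in $[-\tau_n,\tau_n]$. The extra remarks on sensitivity and the necessity of truncation are accurate but add nothing beyond the paper's reasoning.
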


\begin{proof}
	We only have to check \eqref{eq:def:privpar:privacy:dens}.
	Recall that the density of a centred Laplace distributed random variable with scale parameter $b>0$ is given by $1/(2b) \exp \left( - \lvert x \rvert/b \right)$. 
	Put $\xtilde = (x \wedge \tau_n) \vee (-\tau_n)$ and $\xtildeprime = (x' \wedge \tau_n) \vee (-\tau_n)$.
	By the reverse triangle inequality, we have
	\begin{align*}
		\sup_{z \in \Zc} \frac{q(z \, | \, X = x )}{q(z \, | \, X = x^\prime )} &= \sup_{z \in \Zc} \exp \left( -\privpar \cdot \frac{\lvert z - \xtilde \rvert}{2\tau_n} + \privpar \cdot \frac{\lvert z - \xtildeprime \rvert}{2 \tau_n} \right) \\
		&\leq \exp \left( \privpar \cdot \frac{\lvert \xtilde - \xtildeprime \rvert}{2\tau_n} \right)\\
		&\leq \exp(\privpar),
	\end{align*}
	and \eqref{eq:def:privpar:privacy:dens} holds.
\end{proof}

\begin{remark}
	Let use mention that the privacy mechanism defining the $Z_i$ in \eqref{eq:def:Z_i} is convenient for our purposes in this paper but not optimal in other scenarios.
	For instance, imagine that the $X_i$ are i.i.d. and the statistician wants to estimate the underlying probability density function $f$.
	Then, apart from the additional threshold, \eqref{eq:def:Z_i} defines a convolution model with Laplace distributed error density.
	Convolution models are well studied and it is known that the rate of convergence for $s$-smooth functions based on observations $Z_i$ is at least $n^{-2s/(2s+3)}$ \cite{fan1991optimal}.
	However, the optimal rate under local differential privacy (considering $\privpar$ as a fixed constant), that can only be attained using privacy mechanisms different from \eqref{eq:def:Z_i}), is $n^{-s/(s+1)}$ as has been shown in \cite{duchi2018minimax} and \cite{butucea2020local}.
	This emphasizes the fact that the privacy mechanisms to be used should not only depend on the available data but also on the statistical problem at hand.
\end{remark} 
\section{Risk bounds for fixed models}\label{s:minimax}

In this section, we propose an estimator of the spectral density $f$ based only on observations of the privatized data $Z_i$ as defined in \eqref{eq:def:Z_i}.
In this case, we derive an upper risk bound similar to \eqref{eq:upper:nonprivate} for any fixed model $\mf$.
As a consequence we obtain that, regarding the privacy parameter $\privpar$ as a fixed numerical constant, the proposed estimator attains the nearly same rate of convergence in terms of the snippet length $n$ as in the non-private setup up to an additional logarithmic factor.
Our estimator is based on the \emph{privatized periodogram}
\begin{equation*}
	I_n^Z(\omega) = \frac{1}{2\pi n} \left\lvert \sum_{t=1}^{n} (Z_t - \Zbar_n) e^{-\ii t \omega} \right\rvert^2.
\end{equation*}
The function $I_n^Z$ formally resembles the definition of the periodogram in \eqref{eq:def:I_n^X} with $X_i$ being replaced with $Z_i$.
Put $Z'_i = X_i + \xi_i$.
Then $Z_i' = Z_i$ holds whenever $X_i = \Xtilde_i$, that is, the value $X_i$ is not modified in the truncation step \eqref{eq:def:Xtilde_i}.
It is intuitively clear that in this 'nice' case one can hope to extract much more information from the dependency structure of the time series than in the case where truncation leads to a value $\Xtilde_i$ different from $X_i$.
This 'nice' event is formalized in the proofs of Theorems~\ref{thm:upper:minimax} and \ref{thm:adaptation} below via the event $A =  \{  X_i = \Xtilde_i \text{ for all } i \in \llbracket 1,n\rrbracket \}$.
For $i,j \in \llbracket 1,n\rrbracket$, the covariance between $Z'_i$ and $Z'_j$ can be calculated as
\begin{align*}
\Cov (Z'_i, Z'_j) &= \Cov \left( X_i + \xi_i, X_j + \xi_j \right)\\
&= \Cov(X_i,X_j) + \Cov(X_i,\xi_j) + \Cov(\xi_i,X_j) + \Cov(\xi_i, \xi_j)\\
&= \gamma_{\lvert i-j \rvert}  + \frac{8\tau_n^2}{\privpar^2} \, \delta_{ij},
\end{align*}
where $\delta_{ij}$ is the Kronecker delta.
Thus, by the inversion formula \eqref{eq:spec:dens}, we have
\begin{equation}\label{eq:fZprime}
	f^{Z'}(\omega) = f(\omega) + \frac{8\tau_n^2}{\privpar^2}
\end{equation}
with $f^{Z'}$ denoting the spectral density of the stationary time series $(Z'_t)_{t \in \Z}$.
There is only hope to be able to estimate this quantity if we can observe the $Z_i'$ for a significant amount of $i$.
This is the more likely the larger the threshold $\tau_n$ is chosen.
Under our technical assumptions that will be introduced below, a logarithmically increasing sequence $\tau_n$ guarantees that $Z_i = Z_i'$ for all $i \in \llbracket 1,n \rrbracket$ with sufficiently high probability.
In this scenario, it then turns out convenient to \emph{define}  
\begin{equation*}
	\Ihat_n(\omega) = I_n^Z(\omega) - \frac{8\tau_n^2}{\privpar^2},
\end{equation*}
which can be seen as an substitute of the quantity $I_n^{X}$.
Based on the definition of $\Ihat_n$ we can now proceed as in the non-private case.
For a fixed model $\mf$, we put
\begin{equation}\label{eq:def:fhat}
	\fhat_{\mf} = \sum_{i \in S_\mf} \widehat a_i \phi_i,
\end{equation}
where
\begin{equation}\label{eq:def:aihat}
\ahat_i = \int_{-\pi}^{\pi} \Ihat_n(\omega) \phi_i(\omega) \dd \omega.
\end{equation}

The following assumption is used in the proof of Theorem~\ref{thm:upper:minimax} to bound the probability of the event that $Z_i \neq Z_i'$ for at least one index $i$.
\begin{assumption}\label{ASS:SUBGAUSS}[Sub-Gaussianity, see Section~2.3 in \cite{boucheron2013concentration}]
	Let $\mu$ denote the (unknown) mean of the time series $X$.
	The marginals $X_t - \mu$ of the stationary time series $(X_t - \mu)_{t \in \Z}$ are sub-Gaussian with variance factor $\nu > 0$, that is, 
	\begin{equation*}
		\psi_{X_t - \mu}(\lambda) \leq \frac{\lambda^2 \nu}{2} \qquad \forall \lambda \in \R,
	\end{equation*}
	where $\psi_{X_t-\mu}(\lambda) = \log \Eb e^{\lambda (X_t- \mu)}$ denotes the logarithmic moment-generating function of the random variable $X_t-\mu$.
\end{assumption}

Note that we do not assume the mean $\mu$ to be known for our analysis.
A direct consequence of Assumption~\ref{ASS:SUBGAUSS} is the bound
\begin{equation}\label{EQ:CONC:INEQ:SUBGAUSS}
	\Pb( \lvert X_t -\mu \rvert > x) \leq 2 e^{-x^2/(2\nu)} \qquad \text{for all } x >0,
\end{equation}
see, for instance, \cite{boucheron2013concentration}, Theorem~2.1.
We will only need this bound for our further results.

\begin{theorem}[Upper bound]\label{thm:upper:minimax}
	Let Assumptions \ref{ASS:GAMMA_K} and \ref{ASS:SUBGAUSS} hold.
	Further assume that the model $\mf$ is given by a subspace $S_\mf$ of symmetric functions that satisfies $\lVert \phi_i \rVert_\infty \leq C \sqrt{n}$ for all $i \in \Ic_\mf$. 
	Let $Z_i$ be defined as in~\eqref{eq:def:Z_i} with $\tau_n^2 = 56\nu \log(n)$.
	Consider the estimator $\fhat_\mf$ defined through Equations \eqref{eq:def:fhat} and \eqref{eq:def:aihat}.
	Then,
	\begin{equation}\label{eq:upper}
		\Eb \lVert \fhat_\mf - f \rVert^2 \leq \lVert f - f_\mf \rVert^2 + C D_\mf (1 + \log(n)) \left[ \frac{1}{n} \vee \frac{\tau_n^4}{n\privpar^4} \right] 
	\end{equation}
	where $f_\mf$ denotes the projection of $f$ on the space $S_\mf$.
\end{theorem}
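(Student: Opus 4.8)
Since $\fhat_\mf - f_\mf$ lies in $S_\mf$ while $f - f_\mf$ is orthogonal to $S_\mf$, Pythagoras gives $\Eb\lVert\fhat_\mf - f\rVert^2 = \lVert f - f_\mf\rVert^2 + \sum_{i\in\Ic_\mf}\Eb(\ahat_i - a_i)^2$ with $a_i = \int_{-\pi}^{\pi} f\phi_i$, so the task reduces to bounding $\sum_{i\in\Ic_\mf}\Eb(\ahat_i-a_i)^2$ by $CD_\mf(1+\log n)\bigl(\tfrac1n\vee\tfrac{\tau_n^4}{n\privpar^4}\bigr)$. The first step is to isolate the good event $A = \{X_t = \Xtilde_t \text{ for all } t\}$, on which $Z_t = Z_t' = X_t + \xi_t$ and hence $\ahat_i = \ahat_i' := \int_{-\pi}^{\pi}(I_n^{Z'} - 8\tau_n^2/\privpar^2)\phi_i$. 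On the complement I would use Cauchy--Schwarz, $\Eb[(\ahat_i-a_i)^2\1_{\Acomplement}] \le (\Eb(\ahat_i-a_i)^4)^{1/2}\Pb(\Acomplement)^{1/2}$, where the fourth moment is a fixed power of $n$ times a power of $\tau_n^2/\privpar^2$ (as $\ahat_i$ is a quadratic form in bounded truncated observations perturbed by Laplace noise of scale $2\tau_n/\privpar$), while the sub-Gaussian tail \eqref{EQ:CONC:INEQ:SUBGAUSS} together with the calibrated choice $\tau_n^2 = 56\nu\log n$ forces $\Pb(\Acomplement) \le \sum_{t=1}^n\Pb(\lvert X_t\rvert>\tau_n) \lesssim n^{-c}$ with $c$ as large as we like once $n$ is large; the product is then negligible against the claimed bound.

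It remains to control $\sum_{i\in\Ic_\mf}\Eb(\ahat_i' - a_i)^2$, where the truncation has disappeared and $Z'$ is an honest stationary process with autocovariances $\gamma_k^{Z'} = \gamma_k + (8\tau_n^2/\privpar^2)\delta_{k0}$. I would split this into bias and variance, $\Eb(\ahat_i'-a_i)^2 = (\Eb\ahat_i' - a_i)^2 + \Var(\ahat_i')$. For the bias, writing $\Eb\ahat_i' - a_i = \int_{-\pi}^{\pi}(\Eb\Ihat_n' - f)\phi_i$ with $\Ihat_n' = I_n^{Z'} - 8\tau_n^2/\privpar^2$ gives $\sum_i(\Eb\ahat_i'-a_i)^2 \le \lVert\Eb\Ihat_n' - f\rVert^2$; expanding $\Eb I_n^{Z'}$ in the usual way (Fejér weights plus the correction from subtracting the empirical mean) and invoking Assumption~\ref{ASS:GAMMA_K} bounds this by $C/n$ up to a term of order $\tau_n^4/(n^2\privpar^4)$, both dominated by the right-hand side.

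For the variance I would compare $\ahat_i'$ with Comte's non-private coefficient $\ahat_i^X = \int_{-\pi}^{\pi} I_n^X\phi_i$. Inserting $Z_t' = X_t + \xi_t$ and expanding the modulus square gives $I_n^{Z'} = I_n^X + I_n^\xi + (\text{cross term})$, hence $\ahat_i' - a_i = (\ahat_i^X - a_i) + \bigl(\int I_n^\xi\phi_i - c_i\bigr) + \int(\text{cross})\phi_i$ for a deterministic constant $c_i$, and therefore $\Var(\ahat_i') \lesssim \Var(\ahat_i^X) + \Var(\int I_n^\xi\phi_i) + \Eb(\int(\text{cross})\phi_i)^2$. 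Summed over $i \in \Ic_\mf$: the first term is handled by the non-private bound \eqref{eq:upper:nonprivate}, $\sum_i\Var(\ahat_i^X) \le \Eb\lVert\fhat_\mf^X - f\rVert^2 - \lVert f - f_\mf\rVert^2 \le C(f)D_\mf/n$; the cross term is a bilinear form in $\Xb = (X_1,\dots,X_n)$ and $\xib = (\xi_1,\dots,\xi_n)$ with $\xib$ independent of $\Xb$, so conditioning on $\Xb$ its second moment equals $(8\tau_n^2/\privpar^2)\trace(M_iM_i^\transposed\Cov(\Xb))$, which together with $\lVert\Cov(\Xb)\rVert_{\mathrm{op}}\le\sum_k\lvert\gamma_k\rvert$ and $\sum_i\lVert M_i\rVert_F^2\lesssim D_\mf/n$ yields a contribution of order $\tau_n^2 D_\mf/(n\privpar^2)$.

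The essential point is the pure-noise term $\int_{-\pi}^{\pi} I_n^\xi\phi_i = \xib^\transposed\widetilde B_i\xib$, a quadratic form in the i.i.d.\ Laplace vector $\xib$ with variance $\sigma^2 = 8\tau_n^2/\privpar^2$; here I would invoke the concentration inequalities for quadratic forms in sub-exponential random variables from \cite{goetze2019concentration} to obtain $\Var(\xib^\transposed\widetilde B_i\xib) \lesssim \sigma^4\lVert\widetilde B_i\rVert_F^2$ (up to the logarithmic factor that the statement allows). Since $\widetilde B_i$ is, up to the mean-centering projection, $1/n$ times the $n\times n$ Toeplitz matrix with symbol $\phi_i$, Parseval's identity gives $\lVert\widetilde B_i\rVert_F^2 \le (2\pi n)^{-1}$ and $\sum_{i\in\Ic_\mf}\lVert\widetilde B_i\rVert_F^2\le D_\mf/(2\pi n)$, and the hypothesis $\lVert\phi_i\rVert_\infty\le C\sqrt n$ enters precisely through the bound $\lVert\widetilde B_i\rVert_{\mathrm{op}}\le C/\sqrt n$ used in the sub-exponential regime of the concentration inequality. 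This gives $\sum_i\Var(\int I_n^\xi\phi_i) \lesssim \tau_n^4 D_\mf/(n\privpar^4)$. Collecting all contributions and absorbing the middle term via $\tau_n^2/(n\privpar^2) \le \tfrac12(\tfrac1n + \tfrac{\tau_n^4}{n\privpar^4}) \le \tfrac1n\vee\tfrac{\tau_n^4}{n\privpar^4}$ yields \eqref{eq:upper}. The main obstacle is exactly this last block: obtaining a variance bound for $\xib^\transposed\widetilde B_i\xib$ in heavy-tailed Laplace noise with the sharp dependence on $\tau_n$ and $\privpar$, for which the machinery of \cite{goetze2019concentration} is indispensable; the remaining pieces --- the periodogram-bias computation, the reduction to the event $A$ with the calibrated threshold $\tau_n^2 = 56\nu\log n$, and the comparison with Comte's estimator --- are comparatively routine.
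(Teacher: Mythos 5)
Your proposal is correct and its global skeleton coincides with the paper's proof: Pythagoras, the good event $A=\{X_t=\Xtilde_t \ \forall t\}$ on which $\ahat_i$ agrees with the $Z'$-based coefficient, Cauchy--Schwarz plus the sub-Gaussian tail \eqref{EQ:CONC:INEQ:SUBGAUSS} on $\Acomplement$, and on $A$ the decomposition $I_n^{Z'}=I_n^X+I_n^\xi+\Itilde_n$ together with Proposition~\ref{prop:ex:comte} applied to the series $Z'$ (with $M^{Z'}=M+8\tau_n^2/\privpar^2$). Where you genuinely diverge is in how the three stochastic terms are controlled: you use direct second-moment computations --- Comte's bound \eqref{eq:upper:nonprivate} for the $I_n^X$-part, conditioning on $X$ and a trace bound for the bilinear cross term (then absorbing $\tau_n^2/(n\privpar^2)$ by AM--GM), and a variance bound $\lesssim\sigma^4\lVert \widetilde B_i\rVert_{\mathrm{HS}}^2$ for the pure Laplace quadratic form, obtainable either by a fourth-moment calculation or by integrating the tail in Proposition~\ref{prop:goetze} --- whereas the paper routes all three terms through the chaining-based deviation Lemmata~\ref{l:E:GX}, \ref{l:E:Gxi}, \ref{l:E:Gtilde} (Hanson--Wright and \cite{goetze2019concentration}). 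For a fixed model your route is more elementary and in fact sharper: it produces the bound without the $(1+\log(n))$ factor, which is exactly the alternative the paper's own footnote alludes to (citing p.~294 of \cite{comte2001adaptive} for the $X$-term and ``tedious calculations'' for the rest) but does not carry out; the paper's heavier route buys bounds on positive parts $(G^2-\pen/32)_+$ uniformly over the model collection, which is what Theorem~\ref{thm:adaptation} needs and why the machinery is developed anyway. Two small corrections to your sketch, neither fatal: the bias of the noise periodogram is \emph{not} of order $\tau_n^4/(n^2\privpar^4)$ --- the mean-centering/Fej\'er correction for the pure noise part contributes $\sigma^4/n\asymp\tau_n^4/(n\privpar^4)$, which is still admissible in \eqref{eq:upper} since $D_\mf\geq 1$ (this is exactly what Proposition~\ref{prop:ex:comte} applied to $Z'$ delivers); and with the \emph{fixed} calibration $\tau_n^2=56\nu\log(n)$ you cannot make $\Pb(\Acomplement)\lesssim n^{-c}$ with ``$c$ as large as we like'' --- you get a fixed power, $\Pb(\Acomplement)\lesssim n^{-6}$, so one must check (as the paper does) that $n^{-3}$ after the square root indeed dominates the polynomial growth $\asymp n^2\tau_n^4/(1\wedge\privpar^4)$ of the fourth-moment factor; it does, but this matching of exponents is a required verification rather than a free parameter.
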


\begin{remark}
	Of course, if the time series $X$ is known to be bounded, say $\lvert X_t\rvert \leq K$ for all $t \in \Z$, the quantity $\tau_n$ in this section can be replaced with $K$ which removes at least some of the logarithmic factors (the ones arising via $\tau_n$) in the upper bound.
\end{remark}

\begin{remark}
	In the proof of Theorem~\ref{thm:upper:minimax}, Assumption~\ref{ASS:SUBGAUSS} is only needed to bound the probability of the event $\{ \exists i : X_i \neq \Xtilde_i \}$.
	For this purpose, the assumption of sub-Gaussianity might be replaced with assuming subexponential tails for the marginals.
	This would lead to a slightly different (but still logarithmic in terms of $n$) definition of the truncation threshold $\tau_n$.
	However, for Theorem~\ref{thm:adaptation} we will have to impose Gaussian marginals.
\end{remark}

\begin{remark}
	The quantity $\nu$ in Assumption~\ref{ASS:SUBGAUSS} is usually not given to the statistician but can be easily replaced by taking an estimator for this upper variance bound instead.
\end{remark}

\begin{example}[Sobolev ellipsoids and analytic functions]\label{ex:rates}
	In order to illustrate the upper bound~\eqref{eq:upper}, we consider the case where each model can be identified with a natural number: we have $\mf \in \N_0$, set $\Ic_\mf  = \llbracket -\mf,\mf \rrbracket$, and $S_\mf = \spn( \e_j )_{j \in \Ic_\mf}$ with $\e_j(\omega) = \exp(-\ii j \omega)$ denoting the (complex) Fourier basis functions.
	In terms of these basis functions, smoothness may be expressed by assuming membership of $f$ to an ellipsoid
	\begin{equation*}
		\Fc(\beta, L) = \left\{ f = \sum_{j \in \Z} f_j \e_j : f \geq 0 \text{ and } \sum_{j \in \Z} f_j^2 \beta_j^2 \leq L^2 \right\}
	\end{equation*}
	where $L > 0$ and $\beta = (\beta_j)_{j \in \Z}$ is a strictly positive symmetric sequence such that $\beta_0 = 1$ and $(\beta_n)_{n \in \N_0}$ is non-decreasing.
	Typical examples of sequences include the cases where $\beta_j \asymp \lvert j \rvert^s$ (Sobolev ellipsoids) and $\beta_j \asymp \exp(p \lvert j \rvert)$ for some $p \geq 0$ (class of analytic functions).
	Under our assumptions, the squared bias in the upper bound~\eqref{eq:upper} may be bounded as
	\begin{equation*}
		\lVert f_\mf - f \rVert^2 = \sum_{\lvert j \rvert > \mf} f_j^2 \leq \beta_\mf^{-2} \sum_{\lvert j \rvert > \mf} f_j^2 \beta_j^2 \leq L^2 \beta_\mf^{-2}.
	\end{equation*}
	Thus, the trade-off between squared bias and variance is equivalent to the best compromise between $\beta_\mf^{-2}$ and $(2\mf + 1) \cdot (1 + \log(n)) \left[ 1/n \vee \tau_n^4/(n\privpar^4) \right] $.
	In the polynomial case $\beta_j = \lvert j \rvert^s$, the best compromise is realized by choosing $\mfast \asymp \left[ (1 + \log(n)) \left( 1/n \vee \tau_n^4/(n\privpar^4) \right)  \right]^{-1/(2s+1)}$ leading to the rate $\left[ (1 + \log(n)) \left( 1/n \vee \tau_n^4/(n\privpar^4) \right)  \right]^{2s/(2s+1)}$.
	It is remarkable in the setup of spectral density estimation that also the part of the rate in terms of the privacy parameter $\privpar$ does not suffer from a loss in the exponent whereas in the setup of density estimation the optimal non-private rate $n^{-2s/(2s+1)}$ deteriorates to $n^{-2s/(2s+1)} \vee (n\privpar^2)^{-s/(s+1)}$ under differential privacy.
	In the case where $\beta_j = \exp(p\lvert j \rvert)$, we take $\mfast \asymp \log(n) + \log(\privpar)$ to obtain the rate $(\log(n) + \log(\privpar)) \cdot (1 + \log(n)) \left[ 1/n \vee \tau_n^4/(n\privpar^4) \right] $.
\end{example}

\subsection*{Lower bounds}

In this subsection, we derive minimax lower bounds for function classes that can be written as ellipsoids in terms of the Fourier coefficients of the function, that is, the classes $\Fc(\beta,L)$ introduced in Example~\ref{ex:rates}.
As discussed above, this general approach includes Sobolev ellipsoids and classes of analytic functions.
We determine both a non-private and and private lower bound, the former one valid already in the framework where a snippet from the original time series $X$ can be observed, the second one being special to the considered privacy scenario with observation $Z_{1:n}$.

\begin{theorem}[Lower bound]\label{thm:lower}
	Assume that the time series $X$ is Gaussian, and consider the class $\Fc(\beta, L)$ of potential spectral densities introduced in Example~\ref{ex:rates}.
	Further assume that anonymized data $Z_{1:n}$ are generated via a (potentially interactive) channel $Q$ ensuring local differential privacy.
	\begin{enumerate}[a)]
		\item\label{it:nonprivate:lower}
		Assume that $B \defeq \sum_{j \in \Z} \beta_j^{-2} < \infty$.
		Define $\knast$ and $\Psi_n$ via
		\begin{align*}
			 \knast &= \argmin_{k \in \N} \left[ \max \left(  \beta_k^{-2}, \frac{2k+1}{n} \right)  \right],\\
			 \Psi_n &= \max \left(  \beta_{\knast}^{-2}, \frac{2\knast+1}{n} \right),
		\end{align*}
		and assume that there is a positive constant $\eta$ such that
		$$0 < \eta^{-1} \leq \Psi_n^{-1} \min \left\{
		\beta_{\knast}^{-2} , \frac{2\knast + 1}{n} \right\}.$$
		Then,
		\begin{equation*}
			\inf_{\ftilde} \sup_{f \in \Fc(\beta, L)} \Eb \lVert \ftilde - f \rVert^2 \gtrsim \frac{2\knast + 1}{n}
		\end{equation*}
		holds where the infimum is taken over all estimators $\ftilde$ of $f$ based on the privatized sample $Z_{1:n}$.
		\item\label{it:private:lower}
		It holds
		\begin{equation*}
		\inf_{\ftilde} \sup_{f \in \Fc(\beta, L)} \Eb \lVert \ftilde - f \rVert^2 \gtrsim  \min \left\{  \frac{\pi}{n(e^\privpar - 1)^2} , \frac{L^2}{4}  \right\},
		\end{equation*}
		where the infimum is taken over all estimator $\ftilde$ of $f$ based on the privatized sample $Z_{1:n}$.
	\end{enumerate}
\end{theorem}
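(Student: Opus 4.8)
The plan is to prove the two parts by independent reductions, both starting from the same observation: since $Z_{1:n}$ is produced from $X_{1:n}$ through the (possibly interactive) channel $Q$, every estimator $\ftilde$ based on $Z_{1:n}$ is a randomised estimator based on $X_{1:n}$. For part~(a) this makes it enough to prove the bound in the idealised model where $X_{1:n}$ is observed directly, the channel only making the problem harder; for part~(b) the channel must be kept, but it will enter only through the quantitative Kullback--Leibler contraction for privatised data, which forces us to use hypotheses under which the $X_i$ are i.i.d.

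For part~(a) I would apply Assouad's lemma at resolution $k=\knast$. Fix a small constant $c_0>0$ (depending on $L$), put $f_0\equiv c_0$, and for $\theta\in\{0,1\}^{k}$ set $f_\theta=f_0+\gamma\sum_{j=1}^{k}\theta_j(\e_j+\e_{-j})$, a real symmetric trigonometric polynomial, with amplitude $\gamma=\epsilon\,n^{-1/2}$ and $\epsilon>0$ a small numerical constant. Three facts are needed, all consequences of the comparability hypothesis $\beta_{\knast}^{-2}\asymp(2\knast+1)/n$ together with $B=\sum_j\beta_j^{-2}<\infty$: first, $f_\theta$ stays in a fixed compact subset of $(0,\infty)$, because $\gamma k\lesssim\epsilon(k^2/n)^{1/2}$ and $k^2/n\lesssim k\beta_k^{-2}\le B$ by the comparability and the monotonicity of $\beta$; second, $f_\theta\in\Fc(\beta,L)$, since $\sum_m(f_\theta)_m^2\beta_m^2\le c_0^2+2\gamma^2\sum_{j\le k}\beta_j^2\le c_0^2+2\gamma^2 k\beta_k^2\lesssim c_0^2+\epsilon^2(k/n)\beta_k^2\lesssim c_0^2+\epsilon^2$, which is $\le L^2$ once $c_0$ and $\epsilon$ are small; third, $\norm{f_\theta-f_{\theta'}}^2=4\pi\gamma^2\,\rho_H(\theta,\theta')$ with $\rho_H$ the Hamming distance. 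Under the Gaussian assumption, observing $X_{1:n}$ amounts to observing $\Nc(0,T_n(f_\theta))$ with $T_n(f_\theta)$ the Toeplitz covariance of $f_\theta$; since the $f_\theta$ are uniformly pinched between two positive constants, the spectrum of $T_n(f_\theta)$ lies in a fixed compact subset of $(0,\infty)$, and the classical estimate for Gaussian vectors with such covariances gives $\KL(\Nc(0,T_n(f_\theta))\,\|\,\Nc(0,T_n(f_{\theta'})))\le C\,n\,\norm{f_\theta-f_{\theta'}}^2$. Hence the per-coordinate divergence is $\lesssim n\gamma^2\asymp\epsilon^2$, which can be made as small as needed, and Assouad's lemma yields $\inf_{\ftilde}\sup_{f\in\Fc(\beta,L)}\E\norm{\ftilde-f}^2\gtrsim k\gamma^2\asymp k/n\asymp(2\knast+1)/n$.

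For part~(b) I would use Le Cam's two-point method with \emph{white-noise} hypotheses, so that the raw data are genuinely i.i.d. Take constant spectral densities $f_0\equiv c_0$ and $f_1\equiv c_0+\delta$; under $f_\nu$ the snippet consists of i.i.d.\ $\Nc(0,2\pi c_\nu)$ variables, $\nu\in\{0,1\}$, and $f_\nu\in\Fc(\beta,L)$ as soon as $c_0+\delta\le L$ (only the zeroth Fourier coefficient is non-zero and $\beta_0=1$). Writing $M_0^n,M_1^n$ for the laws of $Z_{1:n}$ under the two hypotheses, the privatised Kullback--Leibler contraction for $\privpar$-locally private sequentially interactive channels gives $\KL(M_0^n\|M_1^n)\lesssim(e^{\privpar}-1)^2\,n\,\TV(\Nc(0,2\pi c_0),\Nc(0,2\pi(c_0+\delta)))^2$, and the total variation between the two centred Gaussians is $\lesssim\delta/c_0$. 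Since $\norm{f_1-f_0}^2=2\pi\delta^2$, Le Cam's inequality combined with Pinsker's inequality gives $\inf_{\ftilde}\sup_f\E\norm{\ftilde-f}^2\gtrsim\delta^2$ whenever $(e^{\privpar}-1)^2 n(\delta/c_0)^2$ is below a small absolute constant. Taking $\delta\asymp c_0(n(e^{\privpar}-1)^2)^{-1/2}$ and capping $\delta$ at order $L$ produces the two regimes: if this choice is $\lesssim L$ one takes $c_0$ of constant order and obtains a bound of order $(n(e^{\privpar}-1)^2)^{-1}$, and otherwise one sets $c_0\asymp\delta\asymp L$ and obtains a bound of order $L^2$; tracking the constants through $\norm{\e_0}^2=2\pi$, the contraction inequality and Le Cam's bound yields the stated $\pi$ and $1/4$.

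I expect the main obstacle to be the Kullback--Leibler estimate for Toeplitz-covariance Gaussians in part~(a): one needs the localisation of the spectrum of $T_n(f)$ for $f$ bounded away from $0$ and $\infty$ together with the Hilbert--Schmidt bound $\sum_{s,t\le n}\lvert\gamma_{s-t}(f)-\gamma_{s-t}(g)\rvert^2\lesssim n\norm{f-g}^2$ for the autocovariances, which follows from Parseval's identity. Once this estimate is in place, the remainder of part~(a) is bookkeeping with the constants $\eta$ and $B$, and in part~(b) the only delicate point is tracking numerical constants, the structural content --- a factor $(e^{\privpar}-1)^{-2}$ in the variance term --- being immediate from the contraction inequality.
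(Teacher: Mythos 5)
Your proposal is correct and follows essentially the same route as the paper: in part~(a) you reduce to the non-private model and use a hypercube of trigonometric perturbations of a constant spectral density together with a Gaussian time-series KL bound of the form $n\lVert f-g\rVert^2/(\inf f)^2$ (which you prove via Toeplitz eigenvalue localisation and Parseval, where the paper cites Bentkus' Lemma~3.4 and runs the coordinatewise Hellinger version of the Assouad argument), and in part~(b) you use exactly the paper's device of two constant densities to make the raw data i.i.d.\ and then the Duchi--Jordan--Wainwright KL contraction plus a two-point bound. The remaining differences are cosmetic (Assouad's lemma versus the per-coordinate two-point reduction in (a); bounding the Gaussian total variation directly instead of via $\TV^2\le\KL$ in (b)), and, like the paper's own proof, your argument yields constants that may depend on $L$, which is all the stated $\gtrsim$ requires.
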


\begin{remark}
	The proof of statement~\ref{it:nonprivate:lower} of Theorem~\ref{thm:lower} is based on a reduction to estimators in terms of the original sample $X_{1:n}$.
	Indeed, any lower bound valid for estimators in terms of the original sample stays valid in the privacy case since working with differentially private data can equivalently be interpreted as restricting the set of potentially available estimators from the set of all measurable functions in terms of $X_{1:n}$ to the set of functions of the form $\ftilde \circ Q$ where $Q$ is a channel that yields differential privacy and $\ftilde$ any measurable function in terms of $Z_{1:n}$.
	In the appendix, we give the complete proof since we were not able to find a good reference in the existing literature (the articles \cite{bentkus1985rate} and \cite{efromovich1998data} consider different function classes).
\end{remark}

By combining the non-private and the private lower bound we directly obtain the following corollary.
\begin{corollary} Under the Assumptions of Theorem~\ref{thm:lower} we have
	\begin{equation*}
		\inf_{\ftilde} \sup_{f \in \Fc(\beta, L)} \Eb \lVert \ftilde - f \rVert^2 \gtrsim \max \left\{ \Psi_n^2 , \min \left\{ 1, \frac{1}{n(e^\alpha - 1)^2}  \right\} \right\}.
	\end{equation*}
\end{corollary}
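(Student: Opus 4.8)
The plan is to derive the corollary as a purely formal consequence of the two lower bounds in Theorem~\ref{thm:lower}, after massaging each of them into the shape appearing on the right-hand side of the claimed inequality. The key structural observation that makes the combination work is that the minimax risk on the left is a single number: if I can show separately that $\inf_{\ftilde}\sup_{f\in\Fc(\beta,L)}\Eb\lVert\ftilde-f\rVert^2\gtrsim\Psi_n^2$ and that it is also $\gtrsim\min\{1,1/(n(e^\privpar-1)^2)\}$, then it is automatically $\gtrsim$ the maximum of the two, since from $R\ge c_1A$ and $R\ge c_2B$ one gets $R\ge\tfrac12(c_1A+c_2B)\ge\tfrac12\min(c_1,c_2)\max(A,B)$. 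So the task reduces to the two one-sided bounds.

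For the non-private one I would start from part~\ref{it:nonprivate:lower} of Theorem~\ref{thm:lower}, which gives $\inf_{\ftilde}\sup_{f}\Eb\lVert\ftilde-f\rVert^2\gtrsim(2\knast+1)/n$, and argue that $(2\knast+1)/n\gtrsim\Psi_n^2$. Two observations suffice. First, evaluating the quantity minimised in the definition of $\knast$ at $k=0$ gives $\max(\beta_0^{-2},1/n)=\max(1,1/n)=1$ because $\beta_0=1$; hence $\Psi_n\le 1$, and in particular $\Psi_n^2\le\Psi_n$. Second, the standing hypothesis $0<\eta^{-1}\le\Psi_n^{-1}\min\{\beta_{\knast}^{-2},(2\knast+1)/n\}$ rearranges to $\min\{\beta_{\knast}^{-2},(2\knast+1)/n\}\ge\eta^{-1}\Psi_n$, and since $(2\knast+1)/n$ is one of the two terms of this minimum we get $(2\knast+1)/n\ge\eta^{-1}\Psi_n\ge\eta^{-1}\Psi_n^2$. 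Treating $\eta$ as the fixed constant supplied by the hypothesis, this is the desired bound.

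For the private one I would start from part~\ref{it:private:lower}, namely $\inf_{\ftilde}\sup_{f}\Eb\lVert\ftilde-f\rVert^2\gtrsim\min\{\pi/(n(e^\privpar-1)^2),L^2/4\}$, and apply the elementary inequality $\min\{ac_1,c_2\}\ge\min\{c_1,c_2\}\min\{a,1\}$, valid for positive $a,c_1,c_2$ (distinguish $a\le 1$ and $a>1$). With $a=1/(n(e^\privpar-1)^2)$, $c_1=\pi$, $c_2=L^2/4$ and using $\pi\ge 1$, the right-hand side is at least $\min\{\pi,L^2/4\}\cdot\min\{1/(n(e^\privpar-1)^2),1\}$, which is $\gtrsim\min\{1,1/(n(e^\privpar-1)^2)\}$ once the class radius $L$ is regarded as fixed.

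I do not anticipate a genuine obstacle here: the statement is a formal corollary of Theorem~\ref{thm:lower} and the bulk of the work is in that theorem. The only step that needs more than a line is the passage from $(2\knast+1)/n$ to $\Psi_n^2$, which relies precisely on the balancing hypothesis built around $\eta$ together with the trivial bound $\Psi_n\le 1$; everything else is bookkeeping with minima, maxima and the $\gtrsim$ relation, under the understanding (implicit throughout this section) that the suppressed constants may depend on the fixed quantities $\eta$ and $L$.
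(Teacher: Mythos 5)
Your proposal is correct and follows essentially the same route the paper intends: the corollary is obtained by simply combining the two lower bounds of Theorem~\ref{thm:lower}, noting that the balance hypothesis involving $\eta$ gives $(2\knast+1)/n \geq \eta^{-1}\Psi_n \geq \eta^{-1}\Psi_n^2$ (since $\Psi_n \leq 1$) and that the private bound dominates $\min\{1,1/(n(e^\privpar-1)^2)\}$ up to a constant depending on $L$. The only micro-correction: the minimisation defining $\knast$ runs over $k\in\N$, so to see $\Psi_n\le 1$ evaluate the objective at $k=1$ (using $\beta_1\ge\beta_0=1$ and $3/n\le1$ for large $n$) rather than at $k=0$; this changes nothing in the argument.
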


\begin{remark}
	Up to logarithmic factors the lower bounds determined coincide with the given upper bounds.
	However, our results here do not give a complete answer concerning the exact dependence of the optimal convergence rate in terms of the privacy parameter $\privpar$.
	Intuitively, part \ref{it:private:lower} states only the loss that can be explained from the constant basis function when the spectral density is written in terms of the trigonometric basis.
	It can already be seen here that a deterioration of the usual rate (given by part \ref{it:nonprivate:lower}) is unavoidable if $\privpar$ is too small.
	In this case, one can obtain a lower bound by comparing distributions characterized by two different but constant spectral densities (see the proof of part \ref{it:private:lower} in the appendix).
	Then, there is no dependence between the $X_t$, that is, we have access to an i.i.d. sample and the well-known information theoretic inequalities for differential privacy from the paper \cite{duchi2018minimax} are available.
	These data processing inequalities do not longer hold for dependent $X_t$.
	Developing tools in this direction that help to understand the exact scaling behaviour represent an interesting point of departure for further investigations.
	Note also that even for the Fourier coefficient associated with the constant basis function we do not have coincidence for the scaling in terms of $\privpar$: we have a term $1/(n(e^\privpar - 1)^2)$ in the lower bound (which behaves as $1/(n\privpar^2)$ for small $\privpar$) but a term of order $1/(n\privpar^4)$ in the upper bound (plus extra logarithmic factors).
	This last issue might be tackled by publishing an anonymized version of $X_t^2$ in addition to the privatized version of $X_t$ since for computation of the empirical correlation coefficient associated with the constant basis function no interaction between the data holders is necessary.
\end{remark} 
\section{Risk bound for the adaptive estimator}\label{s:adaptation}

In Section~\ref{s:minimax} we have derived the upper risk bound \eqref{eq:upper} for fixed models $\mf$.
The near optimality for the class of Sobolev ellipsoids was equally illustrated in Example~\ref{ex:rates} and the accompanying lower bounds established in Theorem~\ref{thm:lower}.
The performance of the rate optimal estimators hinges on the choice of a suitable approximating model $\mf$ the choice of which depends on both the sample size $n$ and the regularity of the functions in the considered function class.
Since such regularity assumptions are usually not realistic to be fulfilled, there is need to obtain a suitable model in completely data-driven way.

In order to define the adaptive estimator, first introduce the contrast
\begin{equation*}
	\Upsilon_n(t) = \int_{-\pi}^{\pi} t^2(\omega) \dd \omega - 2 \int_{-\pi}^{\pi} t(\omega) \Ihat_n(\omega) \dd \omega.
\end{equation*}
Note that, the estimator $\fhat_\mf$ associated with the fixed model $\mf$ in Section~\ref{s:minimax} satisfies
\begin{equation*}
	\Upsilon_n(\fhat_\mf) = \min_{t \in \Sc_\mf} \Upsilon_n(t). 
\end{equation*}
The model selection step is performed by putting
\begin{equation}\label{eq:def:mhat}
	\mfhat = \argmin_{\mf \in \Mc_n} \, \{ \Upsilon_n(\fhat_\mf) + \pen(\mf) \}
\end{equation}
where $\Mc_n$ is some set of potential models, and $\pen \colon \Mc_n \to [0,\infty)$ a penalty function is given by
\begin{equation}\label{eq:def:pen}
	\pen(\mf) = C  D_\mf \max \left\lbrace \frac{1}{n}, \frac{\tau_n^4}{n\privpar^4} \right\rbrace \cdot (L_{\mf}^4 + L_\mf + \log(n) ) \cdot (1 + \lVert f \rVert_\infty)^2
\end{equation}
for some constant $C > 0$ that has to be chosen large enough.
Finally, the adaptive estimator of the spectral density $f$ is defined as
\begin{equation*}
	\ftilde = \fhat_{\mfhat}.
\end{equation*}

Before we can state our main result, we have to impose the following assumptions on the collection $\Mc_n$ of models.
These assumptions are already present in the work of \cite{comte2001adaptive}, and no extra assumptions on the models are needed in the privacy framework.

\begin{assumption}\label{ass:model:dim}
	Each $\Sc_\mf$ is a linear finite-dimensional subspace of $L^2([-\pi,\pi])$ containing symmetric functions with dimension $\dim(S_\mf) = D_\mf \geq 1$.
	Moreover, $D_n \defeq \max_{\mf \in \Mc_n} D_\mf \leq n$.
\end{assumption} 

\begin{assumption}\label{ass:rbar}
	Let $(\phi_i)_{i \in \Ic_{\mf}}$ be an orthonormal basis of $S_\mf$, and $\beta = (\beta_i)_{i \in \Ic_{\mf}} \in \R^{D_\mf}$. Set $\lvert \beta \rvert_\infty = \sup_{i \in \Ic_{\mf}} \lvert \beta_i \rvert$.
	Then, for all $\mf \in \Mc_n$,
	\begin{align*}
	\rbar_{\mf} \defeq \frac{1}{\sqrt{D_\mf}} \sup_{\beta \neq 0} \frac{\lVert \sum_{i \in \Ic_{\mf}} \beta_i \phi_i \rVert_\infty}{\lvert \beta \rvert_\infty} \leq C_{\rbar} \sqrt{\frac{n}{D_\mf}}.
	\end{align*}
\end{assumption}

\begin{assumption}\label{ass:weights}
	$\sum_{\mf \in \Mc_n} e^{-L_\mf D_\mf} \leq C_L < \infty$ for some positive weights $L_\mf$.
\end{assumption}

Remarks 2.3--2.6 from \cite{comte2001adaptive} show that Assumptions~\ref{ass:rbar} and \ref{ass:weights} are satisfied for the models mentioned in the introduction for suitable values $C_{\rbar}$ and $L_{\mf}$. 

\begin{theorem}\label{thm:adaptation}
	Let Assumption \ref{ASS:GAMMA_K}  hold.
	Let $Z_i$ be defined as in~\eqref{eq:def:Z_i} with $\tau_n^2 = 56\nu \log(n)$.
	Consider the estimator $\ftilde = \fhat_{\mfhat}$ defined through Equations \eqref{eq:def:fhat}, \eqref{eq:def:aihat}, and \eqref{eq:def:mhat} where the penalty function is defined in \eqref{eq:def:pen}.
	Let Assumptions~\ref{ass:model:dim}--\ref{ass:weights} hold.
	Then, 
	\begin{equation*}
		\Eb \lVert \ftilde - f \rVert^2 \lesssim \inf_{\mf \in \Mc_n} [ \lVert f - f_\mf \rVert^2 + \pen(\mf) ]\\
		+ C(C_{\rbar},\lVert f \rVert_\infty) \max \left\lbrace \frac{1}{n}, \frac{\tau_n^4}{n^3 \privpar^4} \right\rbrace.
	\end{equation*}
\end{theorem}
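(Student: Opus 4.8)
The plan is to follow the classical penalized model selection scheme of Birgé–Massart as adapted by Comte \cite{comte2001adaptive}, but carried out on the event $A = \{X_i = \Xtilde_i \text{ for all } i \in \llbracket 1,n\rrbracket\}$ where the privatized periodogram behaves like the periodogram of the (untruncated, Laplace-perturbed) process $(Z'_t)$. First I would split the risk as $\Eb\lVert \ftilde - f\rVert^2 = \Eb[\lVert \ftilde - f\rVert^2 \1_A] + \Eb[\lVert \ftilde - f\rVert^2 \1_{A^\complement}]$. For the second term, since $\tau_n^2 = 56\nu\log n$, the sub-Gaussian tail bound \eqref{EQ:CONC:INEQ:SUBGAUSS} together with a union bound gives $\Pb(A^\complement) \lesssim n \cdot n^{-28} = n^{-27}$, and a crude deterministic bound on $\lVert \ftilde - f\rVert^2$ (using $D_\mf \le D_n \le n$, the size of $\Mc_n$, and the growth of $\tau_n$, $\xi_i$) shows this contribution is negligible, of smaller order than $1/n$; this is essentially the same computation as in the proof of Theorem~\ref{thm:upper:minimax}.

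The core of the argument is the term on $A$. Following Comte, for any fixed $\mf \in \Mc_n$ the definition \eqref{eq:def:mhat} of $\mfhat$ gives $\Upsilon_n(\ftilde) + \pen(\mfhat) \le \Upsilon_n(\fhat_\mf) + \pen(\mf)$. Writing $\Upsilon_n(t) = \lVert t - f\rVert^2 - \lVert f\rVert^2 - 2\langle t, \Ihat_n - f\rangle$ and introducing the centred empirical process $\nu_n(t) = \langle t, \Ihat_n - \Eb[\Ihat_n \1_A]\rangle$ (plus a deterministic bias term coming from $\Eb[\Ihat_n\1_A] - f$), one arrives after rearrangement at an inequality of the form
\begin{equation*}
\lVert \ftilde - f\rVert^2 \le \lVert f_\mf - f\rVert^2 + \pen(\mf) - \pen(\mfhat) + 2\nu_n(\fhat_{\mfhat} - f_\mf) + (\text{bias terms}).
\end{equation*}
One then bounds $2\nu_n(\fhat_{\mfhat} - f_\mf) \le \tfrac14\lVert \fhat_{\mfhat} - f_\mf\rVert^2 + 4\sup_{t \in S_\mf + S_{\mfhat},\ \lVert t\rVert \le 1}\nu_n(t)^2$ and absorbs the quadratic term using $\lVert \fhat_{\mfhat}-f_\mf\rVert^2 \le 2\lVert\ftilde - f\rVert^2 + 2\lVert f - f_\mf\rVert^2$. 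The decisive step is the concentration bound: one must show that for a suitable function $p(\mf,\mfprime)$ satisfying $\sum_{\mfprime} \Eb[(\sup_{t \in S_\mf + S_{\mfprime}, \lVert t\rVert \le 1}\nu_n(t)^2 - p(\mf,\mfprime))_+ \1_A] \lesssim \max\{1/n, \tau_n^4/(n^3\privpar^4)\}$, and that $4\,p(\mf,\mfprime) \le \pen(\mf) + \pen(\mfprime)$ (up to the constant $C$ in \eqref{eq:def:pen} being chosen large). Taking expectations and using Assumption~\ref{ass:weights} to make the sum over $\mfprime \in \Mc_n$ converge then yields the claimed oracle inequality with leading constant $C_1$ purely numerical.

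The main obstacle — and where the novelty relative to \cite{comte2001adaptive} lies — is establishing this concentration inequality for the quadratic form $\nu_n(t)$, which is now a quadratic form in the $Z'_i = X_i + \xi_i$, i.e.\ in a sum of Gaussian (here I would invoke the Gaussian-marginals hypothesis flagged in the remarks after Theorem~\ref{thm:upper:minimax}) and independent Laplace variables. The Laplace noise is only sub-exponential, not sub-Gaussian, so the classical Gaussian chaos tools do not apply directly; this is precisely where the Hanson–Wright-type bounds for quadratic forms in sub-exponential variables from \cite{goetze2019concentration} enter. Concretely, one writes $\nu_n(t) = \xib^\transposed M_t \xib + (\text{linear and constant terms})$ for the combined noise vector $\xib$, controls the operator and Frobenius norms of $M_t$ uniformly over the unit ball of $S_\mf + S_{\mfprime}$ using Assumption~\ref{ass:rbar} (which bounds $\rbar_\mf$, hence $\lVert\phi_i\rVert_\infty$, by $C_{\rbar}\sqrt{n/D_\mf}$) and $\lVert f\rVert_\infty < \infty$, and then applies \cite{goetze2019concentration} with a chaining/peeling argument over the unit ball to get a tail of the correct shape. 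The bookkeeping that matches the exponents — the appearance of $\tau_n^4/(n^3\privpar^4)$ rather than $\tau_n^4/(n\privpar^4)$, coming from the extra $n^{-1}$ in $\langle \phi_i, \phi_j\rangle$-type normalizations in the quadratic form — and the verification that the polynomial weight $L_\mf^4 + L_\mf + \log n$ in \eqref{eq:def:pen} is exactly what is needed to beat the combinatorial sum over $\Mc_n$ constitute the technical heart of the proof.
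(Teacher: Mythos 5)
Your plan follows essentially the same route as the paper: split the risk on the truncation event $A$ and its complement, use the penalized contrast comparison on $A$, reduce to suprema of the centred (privatized) periodogram process over unit balls of $S_\mf + S_{\mfhat}$, control these via chaining combined with the Hanson--Wright/\cite{goetze2019concentration} concentration bounds for quadratic forms in the Gaussian--Laplace noise, and sum over models using Assumption~\ref{ass:weights}; the paper merely organizes the quadratic form as three separate pieces ($I_n^X$, $I_n^\xi$, and the mixed term $\Itilde_n$) rather than one combined form, which is a cosmetic difference. One harmless slip: since the mean $\mu$ is unknown, the paper bounds $\Pb(A^\complement)$ via $\lvert X_i - \mu\rvert > \tau_n/2$, giving $\Pb(A^\complement) \lesssim n^{-6}$ rather than your $n^{-27}$, but either bound suffices for the Cauchy--Schwarz step on $A^\complement$.
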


\begin{remark}
	Unfortunately, the definition of the penalty function introduced above depends on the unknown value $\lVert f \rVert_\infty$.
	In practise, one can replace this quantity by an appropriate estimator. Theoretical results can be proved for this more realistic estimator as well.
	We do not realize this here, and refer the interested reader to the papers \cite{comte2001adaptive} and \cite{kroll2019nonparametric} where this idea has been put into practise.
	The resulting fully-adaptive estimator can be shown to satisfy an oracle inequality as in the case of known $\lVert f \rVert_\infty$ under mild assumptions.
\end{remark} 
\section{Numerical study}\label{s:sim}

In this section, we illustrate our findings by a small simulation study.
The code that can be used to (re)produce the results is available under
\begin{center}\scriptsize
	\url{https://gitlab.com/kroll.martin/adaptive-private-spectral-density-estimation}.
\end{center}
We consider the same time series model as \cite{neumann1996spectral} and \cite{comte2001adaptive}, that is,  we consider the time series $(X_t)_{t \in \Z}$ defined as
\begin{equation*}
	X_t = X_t^{\text{ARMA}} + \sigma X_t^\text{WN}
\end{equation*}
where $X_t^{\text{ARMA}}$ is an ARMA(2,2)-process,
\begin{equation*}
	X_t^{\text{ARMA}} + a_1 X_{t-1}^{\text{ARMA}} + a_2 X_{t-2}^{\text{ARMA}} = b_0 \epsilon_t + b_1 \epsilon_{t-1} + b_2 \epsilon_{t-2},
\end{equation*}
and $(\epsilon_t)_{t \in \Z}$ and $(X^\text{WN}_t)_{t \in \Z}$ are independent Gaussian white noise processes with unit variance.
From the cited papers we also adopt the choices of the parameters ($a_1=0.2$, $a_2=0.9$, $b_0=1$, $b_1=0$, $b_2 = 1$, and $\sigma = 0.5$).
We consider time series snippets of length $n \in \{ 10000, 20000\}$ and simulate $T=100$ replications of each setup.
In contrast to the mentioned papers, which consider a non-private framework, our principal aim is to illustrate the effect of the privacy level $\privpar$.
For this purpose, we consider $\alpha \in \{ +\infty, 5, 2.5 \}$ where formally putting $\alpha = +\infty$ corresponds to the case without any privacy constraint.
Note that these choices of the privacy parameter are very conservative and provide only a moderate anonymization of the data (see, for instance, Figure~3 in \cite{duchi2018minimax} where the link between the privacy parameter and a hypothesis testing problem is illustrated).

For each parameter setup, we computed the mean $L^2$-risk over the $T=100$ replications, its standard deviation $\vhat$, and the $\pm 95\%$ confidence intervals defined as $1.96\vhat/\sqrt{T}$ (see \cite{comte2001adaptive,neumann1996spectral}).
We slightly modified the method considered in Sections~\ref{s:minimax}
and \ref{s:adaptation} for the theoretical analysis in order to perform our simulation experiments.
Instead of a logarithmically increasing sequence $\tau_n$ (which was principally introduced to control the probability of the event $\Acomplement$ introduced in the analysis in the appendix), we took $\tau_n = 4$ after some calibrations.
As \cite{comte2001adaptive}, we restricted ourselves to histogram estimators of the spectral density.
For a given dimension $D_{\mf} = d$, the orthonormal basis functions are defined as
\begin{equation*}
	\phi_j^{(d)} = \sqrt{\frac{d}{\pi}} \1_{[\pi j /d, \pi(j+1/d)}, \qquad j \in \llbracket 0,d-1\rrbracket
\end{equation*}
(we define the basis functions only on $[0,\pi)$ and extend the final estimator on the interval $[-\pi,\pi]$ by exploiting the symmetry of the target spectral density).
For the model $\mf$, the estimator $\fhat_\mf$ is then
\begin{equation*}
	\fhat_\mf = \sum_{j=0}^{d-1} \ahat_j^{(d)} \phi_j^{(d)}
\end{equation*}
where the estimated coefficients are calculated via the formula
\begin{equation*}
	\ahat_j^{(d)} = \sqrt{\frac{d}{\pi}} \left[ \frac{c_0}{2d} + \frac{1}{\pi} \sum_{r=1}^{n-1} \frac{c_r}{r} \left( \sin \left( \frac{\pi(j+1)r}{d} \right)  - \sin \left( \frac{\pi j r}{d} \right)  \right) \right] 
\end{equation*}
for $j \in \llbracket 0,d-1\rrbracket$ where $c_r = c_{r,n}$, $r \in \llbracket 0,n-1 \rrbracket$ are the empirical covariances of the masked data $Z_{1:n}$, that is,
\begin{equation*}
	c_{r,n} = \frac{1}{n} \sum_{k=1}^{n-r} (Z_k - \Zbar_n) (Z_{k+r} - \Zbar_n)
\end{equation*}
(the value $c_{0,n}$ has to be modified by subtracting $8\tau_n^2/\privpar^2$ afterwards).
Ignoring logarithmic factors and constants in the theoretical penalty in Section~\ref{s:adaptation}, this leads to the following form of the penalized contrast criterion:
\begin{equation*}
	- \sum_{j=0}^{d-1} (\ahat_j^{(d)})^2 + \frac{\kappa d}{n} \max \left\{ 1, \frac{\tau_n^4}{\alpha^4} \right\}
\end{equation*}
(we took $\kappa = 1$).
We minimized this criterion over potential dimensions $d\in \llbracket 1,50 \rrbracket$.
The results of our simulation study are summarized in Table~\ref{table:sim} and illustrated in Figures \ref{fig:alpha:inf}, \ref{fig:alpha:5_0}, and \ref{fig:alpha:2_5} (note the different scaling of the $y$-axes in the plots).
A profound loss of performance is encountered for decreasing values of $\privpar$ which can be compensated with taking considerably longer snippets from the time series only.
This might make inference from privatized data difficult in applications where only samples of moderate size can be collected. 

	\begin{table}
		\renewcommand{\arraystretch}{1.5}
		\caption{Results of the simulation study. The table contains the mean of the $L^2$-risk over $T=100$ replications of the experiment, and the $\pm 95\%$ intervals computed as in \cite{neumann1996spectral} as $1.96\widehat v/\sqrt{T}$ where $\widehat v$ is the standard deviation.}\label{table:sim}
		\footnotesize
		\begin{center}
			\begin{tabular}[t]{lccc!{\color{gray}\vrule}ccc}
						& \multicolumn{3}{c!{\color{gray}\vrule}}{n = 10000} & \multicolumn{3}{c}{n = 20000} \\
						$\privpar$ & $+\infty$ & $5.0$ & $2.5$ & $+\infty$ & $5.0$ & $2.5$\\
						\hline
						$L^2$-risk & 0.00216 & 0.01316 & 0.13629 & 0.00159 & 0.00734 & 0.07126\\
						$\pm $ 95\% CI & 0.00012 & 0.00048 & 0.00464 & 0.00007 & 0.00022 & 0.00243 
			\end{tabular}
		\end{center}
		\normalsize
	\end{table}

\begin{center}
	\begin{figure}
		\centering\textbf{\boldmath$\privpar = + \infty$}\par \begin{subfigure}[c]{0.45\textwidth}
			\includegraphics[width=0.99\textwidth]{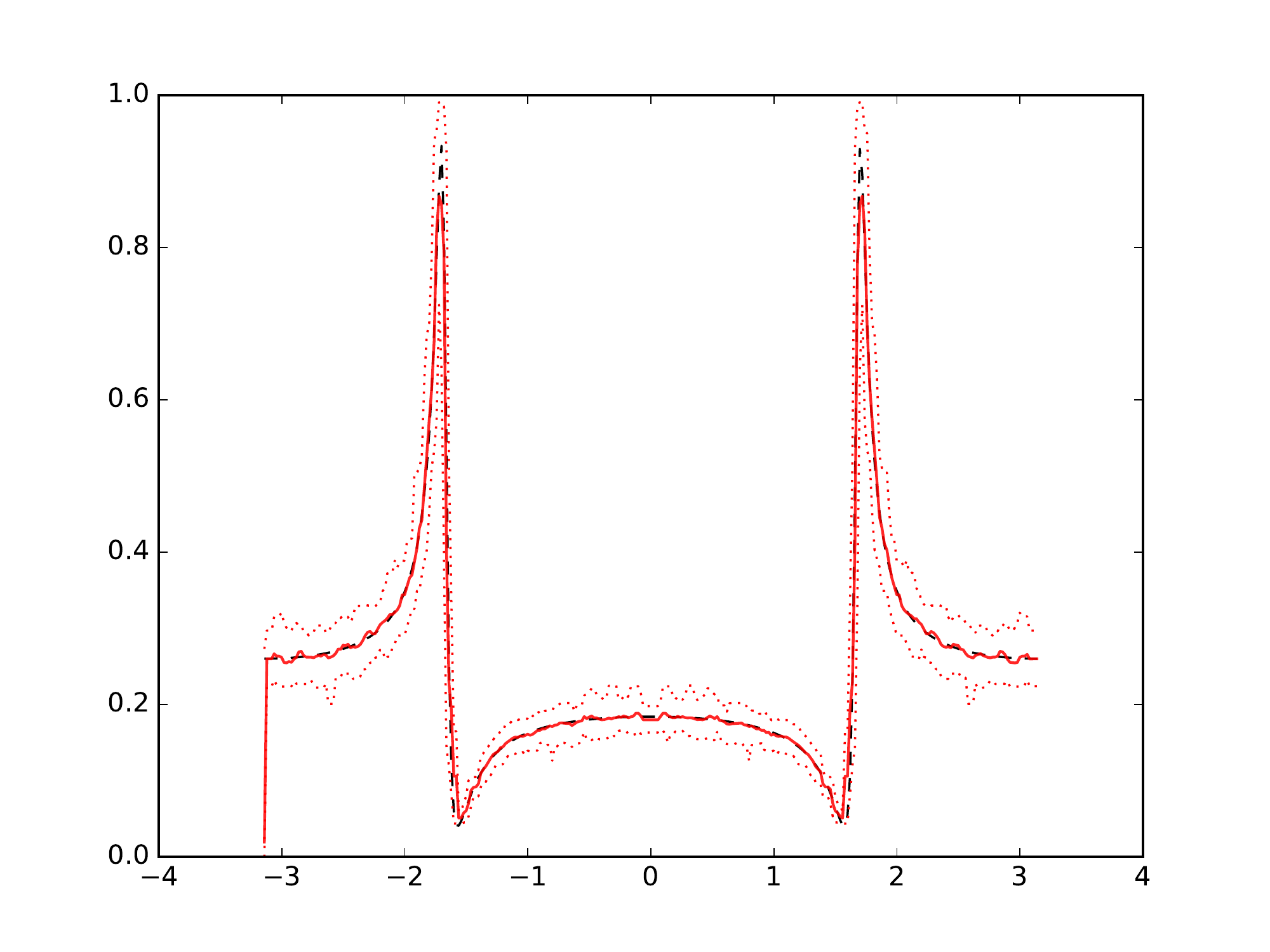}
			\subcaption{Snippet length $n = 10000$}
		\end{subfigure}
		\begin{subfigure}[c]{0.45\textwidth}
			\includegraphics[width=0.99\textwidth]{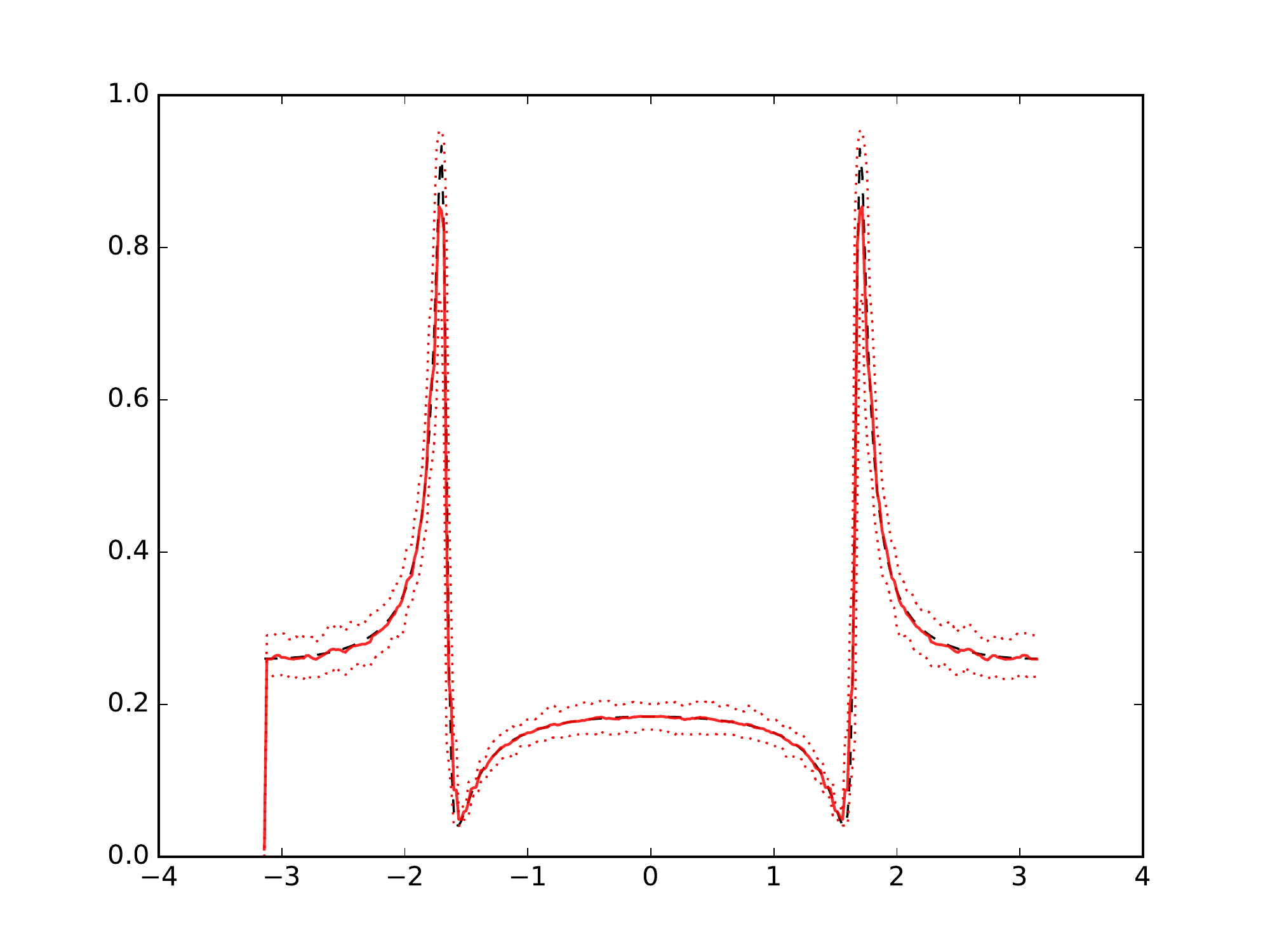}
			\subcaption{Snippet length $n = 20000$}
		\end{subfigure}
		\caption{The figures show for the two considered snippet sizes $n \in \{ 10000, 20000\}$ the mean of the estimator (red solid line) and both the 0.95 and 0.05 pointwise quantile (red dotted lines) over $T=100$ replications for the case $\privpar = + \infty$ (this corresponds to the case without privacy constraints). The true spectral density is represented as a black dashed line.}\label{fig:alpha:inf}
	\end{figure}
\end{center}

\begin{figure}
		\centering\textbf{\boldmath$\privpar = 5.0$}\par \begin{subfigure}[c]{0.45\textwidth}
			\includegraphics[width=0.99\textwidth]{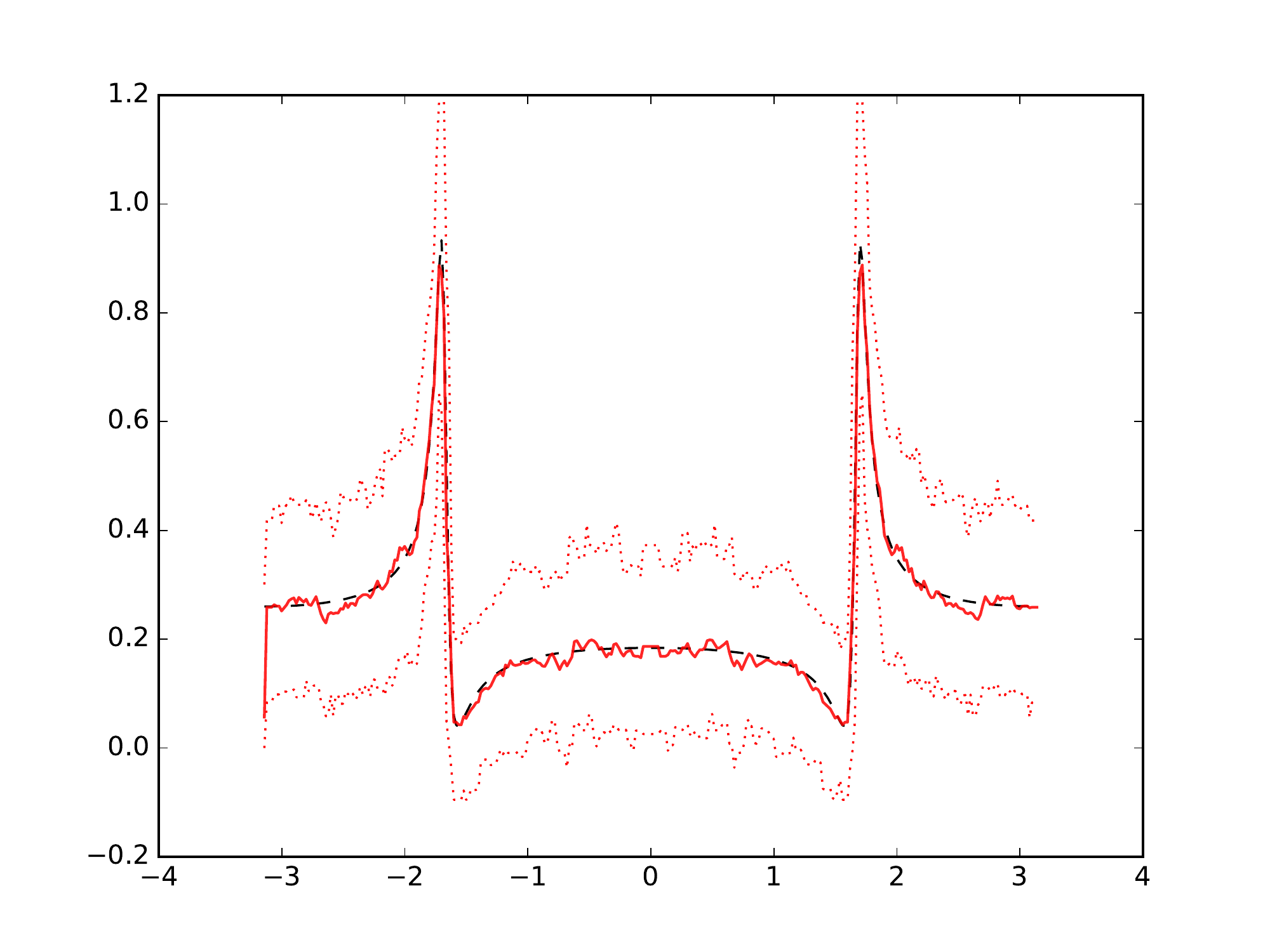}
			\subcaption{Snippet length $n = 10000$}
		\end{subfigure}
		\begin{subfigure}[c]{0.45\textwidth}
			\includegraphics[width=0.99\textwidth]{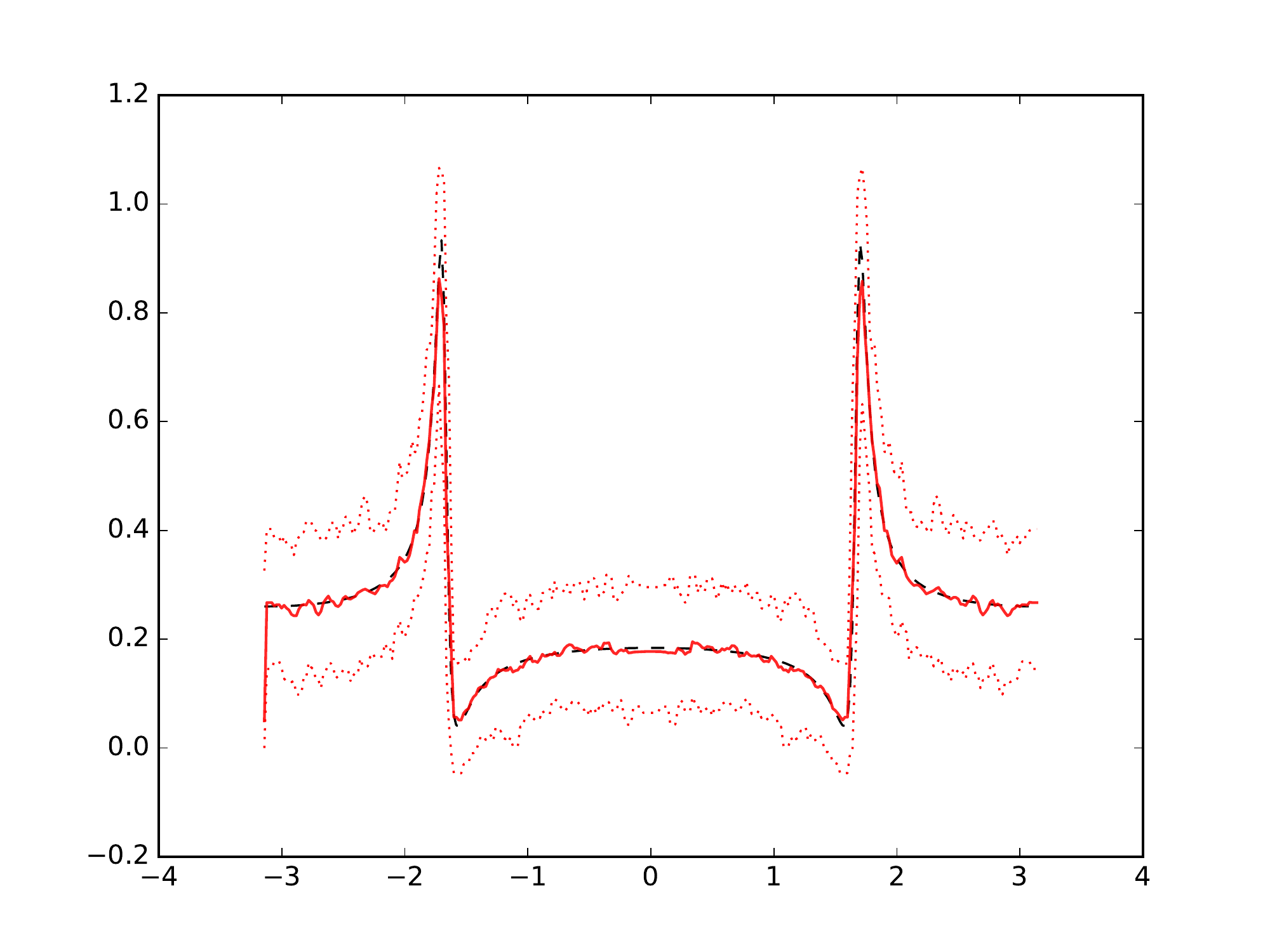}
			\subcaption{Snippet length $n = 20000$}
		\end{subfigure}
		\caption{The figures show for the two considered snippet sizes $n \in \{ 10000, 20000\}$ the mean of the estimator (red solid line) and both the 0.95 and 0.05 pointwise quantile (red dotted lines) over $T=100$ replications for the case $\privpar = 5.0$. The true spectral density is represented as a black dashed line.}\label{fig:alpha:5_0}
	\end{figure}

	\begin{figure}
		\centering\textbf{\boldmath$\privpar = 2.5$}\par \begin{subfigure}[c]{0.45\textwidth}
			\includegraphics[width=0.99\textwidth]{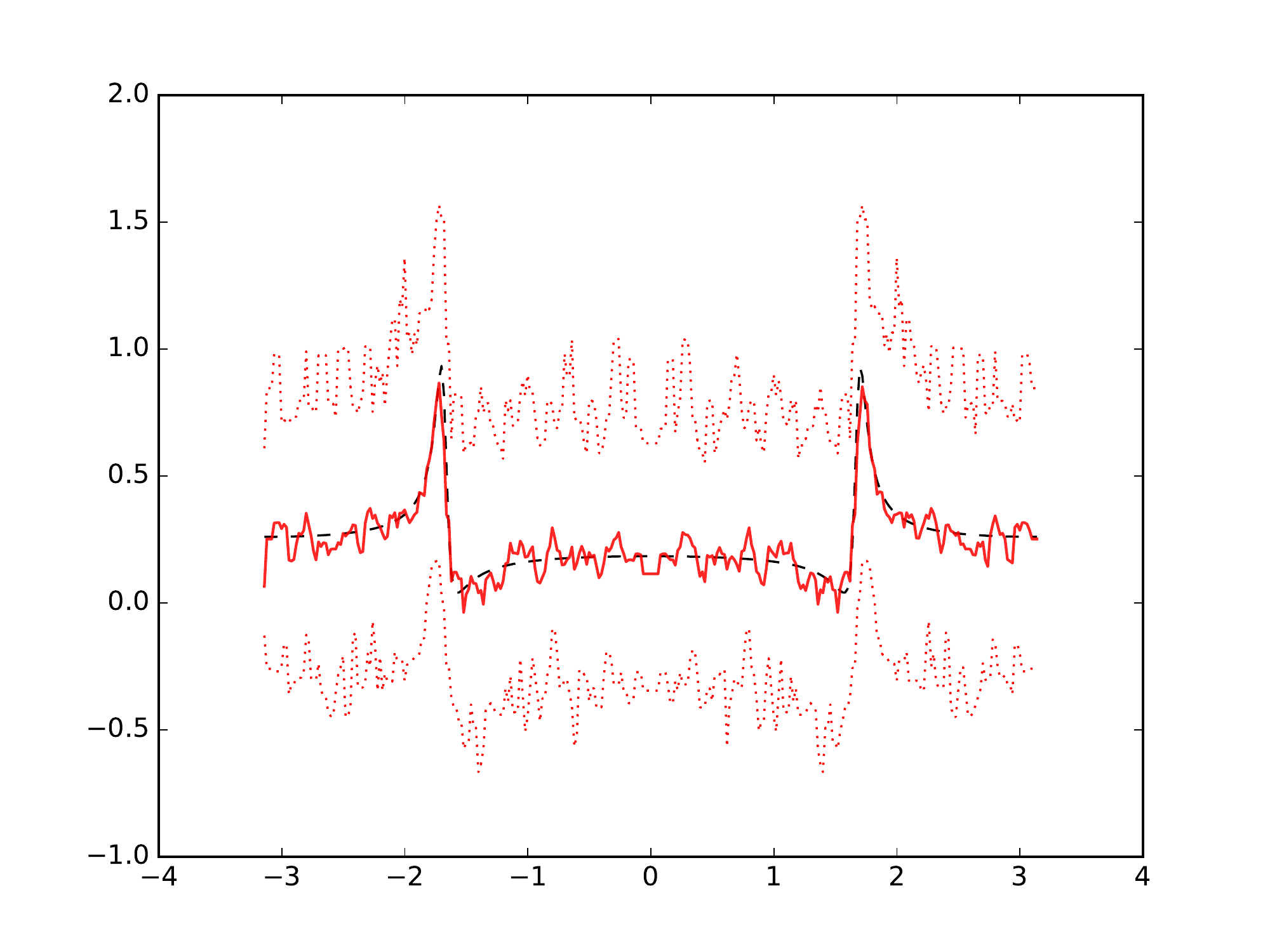}
			\subcaption{Snippet length $n = 10000$}
		\end{subfigure}
		\begin{subfigure}[c]{0.45\textwidth}
			\includegraphics[width=0.99\textwidth]{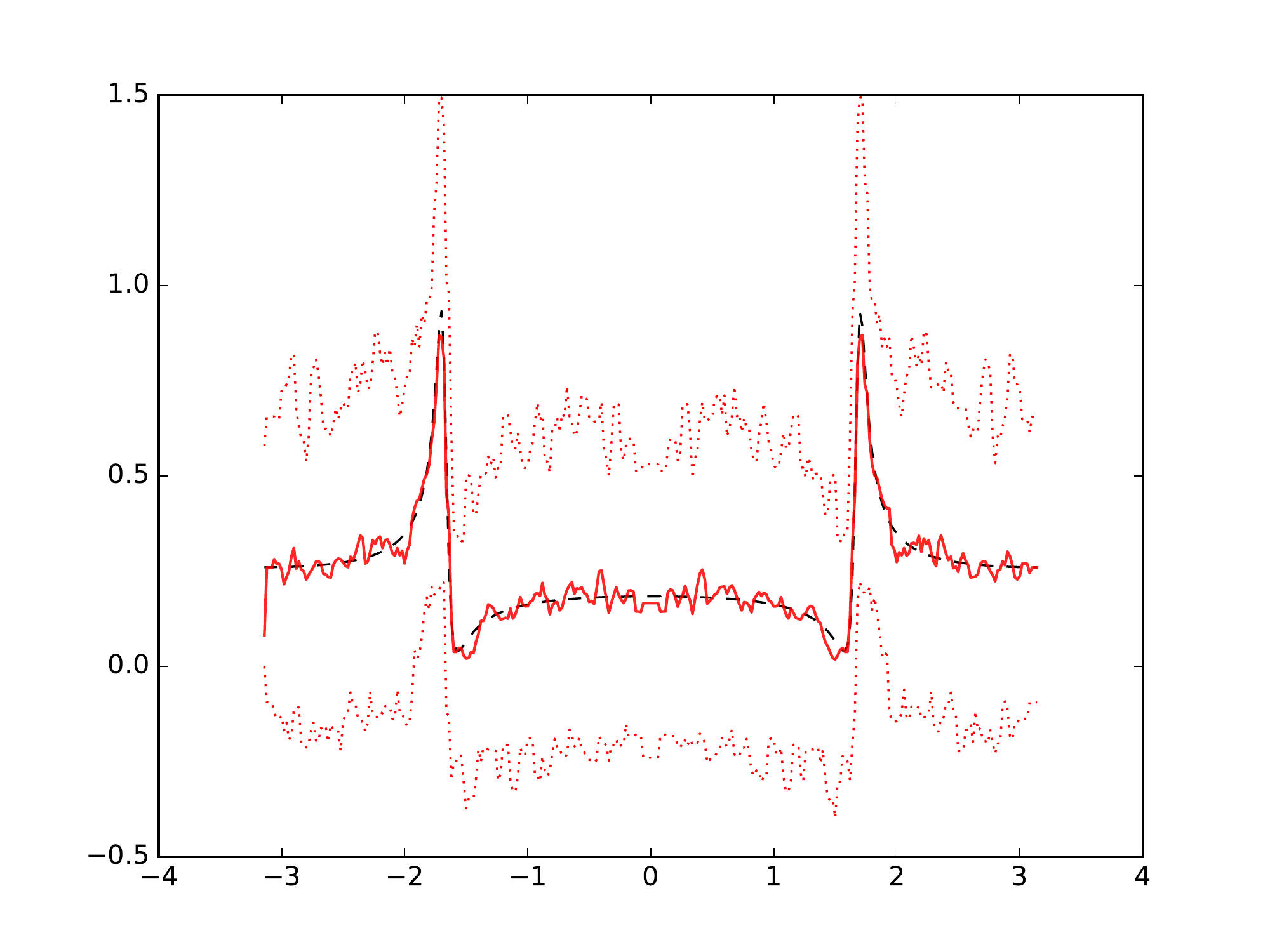}
			\subcaption{Snippet length $n = 20000$}
		\end{subfigure}
		\caption{The figures show for the two considered snippet sizes $n \in \{ 10000, 20000\}$ the mean of the estimator (red solid line) and both the 0.95 and 0.05 pointwise quantile (red dotted lines) over $T=100$ replications for the case $\privpar = 2.5$. The true spectral density is represented as a black dashed line.}\label{fig:alpha:2_5}
	\end{figure} 
\section{Summary and outlook}\label{s:discussion}

In this paper, we have extended the model selection approach for adaptive nonparametric spectral density estimation to the framework of local $\privpar$-differential privacy.
We were able to derive an oracle inequality similar to the one in the non-private setup.
Since the proposed adaptive procedure is limited to Gaussian time series it might also be of interest to study whether known adaptive estimators that work in non-Gaussian frameworks (for instance, the wavelet estimator considered in \cite{neumann1996spectral}) can also be transferred to the framework of the present paper.
The exact dependence of minimax rates of convergence on the privacy parameter as well as the unclear necessity of logarithmic factors in these rates is a remaining open problem that hopefully stimulates the development of further theoretical results.
In addition, a more detailed series of simulation experiments seems to be necessary in order to calibrate an estimator that produces reliable results in practise.
 
\appendix

\section{Proofs of Section~\ref{s:minimax}}

The following result (which is valid without any distributional assumptions on the stationary time series) has been proven in \cite{comte2001adaptive}.
\begin{proposition}[\cite{comte2001adaptive}, Proposition~1]\label{prop:ex:comte}
	Let $X$ be a stationary sequence with autocovariance function satisfying Assumption~\ref{ASS:GAMMA_K}.
	Then
	\begin{equation*}
		\int_{-\pi}^{\pi} (f(\omega) - \Eb (I_n(\omega)))^2 \dd \omega \leq \frac{M_1 + 39M^2}{2\pi n} \eqdef \frac{M_2}{n}.
	\end{equation*}
\end{proposition}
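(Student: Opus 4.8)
The plan is to turn the integrated bias into a sum of squared Fourier coefficients via Parseval's identity, to identify the ``Fejér'' part of the bias as a term of order $M_1/n$, and to control the extra terms produced by the self-centring of $I_n$ by a term of order $M^2/n$. Since $I_n = I_n^X$ is built from $X_t - \bar X_n = (X_t - \mu) - (\bar X_n - \mu)$, passing to the centred process $(X_t - \mu)_{t\in\Z}$ changes neither $I_n$ nor the autocovariance sequence $\gamma$, so I may assume $\Eb X_t = 0$.

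First I would expand $I_n(\omega) = \frac{1}{2\pi}\sum_{|k|<n}\widehat\gamma_k\,\ee^{-\ii k\omega}$, where $\widehat\gamma_k = \frac1n\sum_{t=1}^{n-|k|}(X_t - \bar X_n)(X_{t+|k|} - \bar X_n)$, by multiplying out the square modulus. Combined with the inversion formula $f(\omega) = \frac1{2\pi}\sum_{k\in\Z}\gamma_k\,\ee^{-\ii k\omega}$ and Assumption~\ref{ASS:GAMMA_K} (which makes all the series involved absolutely convergent), Parseval's identity on $[-\pi,\pi]$ yields
\begin{equation*}
	\int_{-\pi}^{\pi}\big(f(\omega) - \Eb I_n(\omega)\big)^2\,\dd\omega = \frac{1}{2\pi}\Big[\sum_{|k|\ge n}\gamma_k^2 + \sum_{|k|<n}\big(\gamma_k - \Eb\widehat\gamma_k\big)^2\Big].
\end{equation*}
Next I would compute $\Eb\widehat\gamma_k$: expanding $\Eb[(X_t - \bar X_n)(X_{t+k} - \bar X_n)] = \gamma_k - \frac1n\sum_{s=1}^n\gamma_{t-s} - \frac1n\sum_{s=1}^n\gamma_{t+k-s} + \frac1{n^2}\sum_{s,s'=1}^n\gamma_{s-s'}$ and averaging over $t \in \llbracket 1, n-|k|\rrbracket$ gives $\gamma_k - \Eb\widehat\gamma_k = \frac{|k|}{n}\gamma_k + \rho_k$, where $\rho_k$ collects the three self-centring corrections. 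Since each of the two inner sums $\sum_{s=1}^n\gamma_{t-s}$ and $\sum_{s=1}^n\gamma_{t+k-s}$ runs over $n$ consecutive entries of $\gamma$ and so has modulus at most $M$, while $|\sum_{s,s'=1}^n\gamma_{s-s'}|\le nM$, one obtains the crude bound $|\rho_k| \le \frac{3(n-|k|)M}{n^2} \le \frac{3M}{n}$.

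For the principal part I would use $\gamma_k^2 \le \frac{|k|}{n}\gamma_k^2$ for $|k|\ge n$ and $\frac{k^2}{n^2}\gamma_k^2 \le \frac{|k|}{n}\gamma_k^2$ for $|k|<n$, so that $\sum_{|k|\ge n}\gamma_k^2 + \sum_{|k|<n}\frac{k^2}{n^2}\gamma_k^2 \le \frac1n\sum_{k\in\Z}|k|\gamma_k^2 = \frac{M_1}{n}$. Writing $(\gamma_k - \Eb\widehat\gamma_k)^2 = \frac{k^2}{n^2}\gamma_k^2 + 2\frac{|k|}{n}\gamma_k\rho_k + \rho_k^2$ for $|k|<n$, it remains to sum the last two groups over $k$. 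The step I expect to be the real obstacle is the cross term: a plain Cauchy--Schwarz estimate produces a contribution of order $M\sqrt{M_1}/n$, which \emph{cannot} be absorbed into the target shape $M_1 + c\,M^2$, since there is no universal bound of $M_1$ by $M^2$. The fix is an $\ell^1$--$\ell^\infty$ estimate, $\left\lvert\sum_{|k|<n}\frac{|k|}{n}\gamma_k\rho_k\right\rvert \le \big(\sup_k|\rho_k|\big)\sum_{k\in\Z}|\gamma_k| \le \frac{3M}{n}\cdot M = \frac{3M^2}{n}$, which keeps the coefficient of $M_1$ equal to $1$. Finally $\sum_{|k|<n}\rho_k^2 \le \frac{9M^2}{n^4}\sum_{|k|<n}(n-|k|)^2 \le \frac{9M^2}{n}$, because $\sum_{|k|<n}(n-|k|)^2 = \frac{2n^3+n}{3}\le n^3$. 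Collecting these bounds gives $\int_{-\pi}^\pi(f - \Eb I_n)^2\,\dd\omega \le \frac{1}{2\pi n}\,(M_1 + c\,M^2)$ for an explicit numerical constant $c$; sharpening the intermediate estimates as in \cite{comte2001adaptive} then produces the stated value $M_2 = M_1 + 39\,M^2$. The square-modulus expansion and the Parseval step are routine; all the care goes into the self-centring corrections $\rho_k$ and, above all, into avoiding the spurious $\sqrt{M_1}$ in the cross term.
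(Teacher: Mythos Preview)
The paper does not actually prove this proposition: it is stated as a quotation of Proposition~1 in \cite{comte2001adaptive}, with no argument given beyond the citation. So there is no ``paper's own proof'' to compare against here.

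Your argument is correct and is essentially the computation behind the original result: Parseval reduces the integrated bias to $\frac{1}{2\pi}\bigl[\sum_{|k|\ge n}\gamma_k^2 + \sum_{|k|<n}(\gamma_k - \Eb\widehat\gamma_k)^2\bigr]$, the decomposition $\gamma_k - \Eb\widehat\gamma_k = \tfrac{|k|}{n}\gamma_k + \rho_k$ with $|\rho_k|\le 3(n-|k|)M/n^2$ is exactly right, and your $\ell^1$--$\ell^\infty$ handling of the cross term (rather than Cauchy--Schwarz) is the correct move to keep the coefficient of $M_1$ equal to~$1$. One small remark: your bounds actually give $\frac{1}{2\pi n}(M_1 + 15M^2)$, which is \emph{stronger} than the stated $\frac{1}{2\pi n}(M_1 + 39M^2)$; the closing sentence about ``sharpening'' to reach $39$ is therefore backwards --- no sharpening is needed, the inequality with $39$ follows a fortiori.
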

This result can also be applied to the time series $Z'$. Then the constant $M_1$ does not change but for the constant $M$ we have $M^{Z'} = M^X + 8\tau_n^2/\privpar^2$.

\subsection{Proof of Theorem~\ref{thm:upper:minimax} (Upper bound for fixed model $\mf$)}
	
Let us introduce the event $A$ and its complement defined as follows:
\begin{equation*}
	A = \bigcap_{i=1}^n \{ \Xtilde_i = X_i \}, \qquad \Acomplement = \bigcup_{i=1}^n \{\Xtilde_i \neq X_i \}.
\end{equation*}
As above, let us denote with $f_\mf$ the projection of $f$ on the space $S_\mf$.
We have the decomposition
\begin{align}
	\Eb \lVert \fhat_\mf - f \rVert^2 &= \lVert f_\mf - f \rVert^2 + \Eb \lVert \fhat_\mf - f_\mf \rVert^2\notag\\
	&= \lVert f_\mf - f \rVert^2 + \Eb \lVert \fhat_\mf - f_\mf \rVert^2 \1_A + \Eb \lVert \fhat_\mf - f_\mf \rVert^2 \1_{A^\complement}.\label{eq:decomp:upper}
\end{align}

The first (pure bias) term on the right-hand side is already in the form of the statement of the theorem, and we have to study the terms including $\1_A$ and $\1_{A^\complement}$ only.

\smallskip

\underline{Bound for $\Eb \lVert \fhat_\mf - f_\mf \rVert^2 \1_A$}:
By the very definition of $A$ we have $X_i=\Xtilde_i$ on $A$, and hence $Z_i = Z'_i = X_i + \xi_i$ for $\xi_i \sim \Laplace(2\tau_n/\privpar)$.
Hence, on the event $A$ the identity
\begin{equation*}
	I_n^Z(\omega) = I_n^{Z'}(\omega)
\end{equation*}
holds (with $I_n^{Z'}$ defined exactly as $I_n^Z$ with $Z$ replaced with $Z'$), and we have
\begin{align*}
	\lVert \fhat_\mf - f_\mf \rVert^2 \1_A &= \sum_{i \in \Ic_\mf} \lvert \langle f - \Ihat_n, \phi_i \rangle \rvert^2 \1_A\\
	&= \sum_{i \in \Ic_\mf} \lvert \langle f - (I_n^Z(\omega) - \frac{8\tau_n^2}{\privpar^2}), \phi_i \rangle \rvert^2 \1_A\\
	&= \sum_{i \in \Ic_\mf} \lvert \langle f - (I_n^{Z'}(\omega) - \frac{8\tau_n^2}{\privpar^2}), \phi_i \rangle \rvert^2 \1_A\\
	&\leq \sum_{i \in \Ic_\mf} \lvert \langle f - ( I_n^{Z'}(\omega) - \frac{8\tau_n^2}{\privpar^2} ) ,\phi_i \rangle \rvert^2\\
	&= \sum_{i \in \Ic_\mf} \lvert \langle f^{Z'} - I_n^{Z'}, \phi_i \rangle \rvert^2.
\end{align*}
where the last identity is established in \eqref{eq:fZprime}.
From this we get
\begin{align*}
	\lVert \fhat_\mf - f_\mf \rVert^2 \1_A &\leq 2 \sum_{i \in \Ic_\mf} (\lvert \langle f^{Z'} - \Eb I_n^{Z'},\phi_i \rangle \rvert^2 + \lvert \langle \Eb I_n^{Z'} - I_n^{Z'}, \phi_i \rangle \rvert^2 )\\
	&= 2 \lVert (f^{Z'} - \Eb I_n^{Z'})_\mf \rVert^2 + 2 \sum_{i \in \Ic_\mf} \lvert \langle \Eb I_n^{Z'} - I_n^{Z'}, \phi_i \rangle \rvert^2\\
	&\leq 2 \lVert f^{Z'} - \Eb I_n^{Z'} \rVert^2 + 2 \sum_{i \in \Ic_\mf}  \lvert \langle \Eb I_n^{Z'} - I_n^{Z'}, \phi_i  \rangle \rvert^2
\end{align*}
in order to bound the term $\lVert f^{Z'} - \Eb I_n^{Z'} \rVert^2$, we use Proposition~\ref{prop:ex:comte} in order to obtain:
\begin{align*}
	\lVert f^{Z'} - \Eb I_n^{Z'} \rVert^2 &= \int_{-\pi}^{\pi} (f^{Z'} - \Eb I_n(\omega))^2 \dd \omega \leq \max \left( \frac{M_1}{\pi n}, \frac{39 (M^{Z'})^2}{\pi n} \right).
\end{align*}
Note that Assumption~\ref{ASS:GAMMA_K} can also be applied to the time series $Z'$ instead of $X$ with $M_1= M_1^{Z'} = M_1^X$ and with $M=M^X$ replaced with $M^{Z'} = M^X + \frac{8\tau_n^2}{\privpar^2}$.
Hence,
\begin{equation}\label{eq:upper:A:1}
	\lVert f^{Z'} - \Eb I_n^{Z'} \rVert^2 \lesssim \max \left( \frac{\tau_n^4}{n \privpar^4} , \frac{1}{n} \right).
\end{equation}
Let us now consider the expression $\Eb \sum_{i \in \Ic_\mf} \lvert \langle \Eb I_n^{Z'} - I_n^{Z'}, \phi_i \rangle \rvert^2$.
We write
\begin{equation*}
	I_n^{Z'}(\omega)  = I_n^X + I_n^\xi  + \Itilde_n,
\end{equation*}
where
\begin{align*}
	I_n^X &= \frac{1}{2\pi n} \left\lvert \sum_{t=1}^n (X_t - \Xbar_n) e^{-\ii t \omega} \right\rvert^2,\\
	I_n^\xi &= \frac{1}{2\pi n} \left\lvert \sum_{t=1}^n (\xi_t - \xibar_n) e^{-\ii t \omega} \right\rvert^2, \qquad \text{and}\\
	\Itilde_n &= \frac{1}{2\pi n} \left( \sum_{t=1}^n (X_t - \Xbarn) e^{-\ii t \omega} \right) \left( \sum_{t=1}^n (\xi_t - \xibarn) e^{\ii t \omega} \right) +\\
	\hspace{1em}&+ \frac{1}{2\pi n} \left( \sum_{t=1}^n (X_t - \Xbarn) e^{\ii t \omega} \right) \left( \sum_{t=1}^n (\xi_t - \xibarn) e^{-\ii t \omega} \right).
\end{align*}
Hence, by exploiting that $\Eb \Itilde_n = 0$, we obtain
\begin{align*}
	\sum_{i \in \Ic_\mf} \lvert \langle \Eb I_n^{Z'} - I_n^{Z'}, \phi_i \rangle \rvert^2 &\leq  \sum_{i \in \Ic_\mf} \lvert \langle \Eb I_n^{X} - I_n^{X}, \phi_i \rangle \rvert^2\\
	&\hspace{1em}+ \sum_{i \in \Ic_\mf} \lvert \langle \Eb I_n^{\xi} - I_n^{\xi}, \phi_i \rangle \rvert^2 + \sum_{i \in \Ic_\mf} \lvert \langle \Eb \Itilde_n - \Itilde_n,  \phi_i \rangle \rvert^2.
\end{align*}
Put
\begin{align*}
G_{i,X}(\mf) &= \sup_{\zeta \in \{ \pm 1\} } \langle I_n^X - \Eb I_n^X,\zeta \phi_i  \rangle,\\
G_{i,\xi}(\mf) &= \sup_{\zeta \in \{ \pm 1\}} \langle I_n^\xi - \Eb I_n^\xi, \zeta \phi_i \rangle,\\
\Gtilde_i(\mf) &= \sup_{\zeta \in \{ \pm 1\}} \langle \Itilde_n, \zeta \phi_i \rangle.
\end{align*}
Then, for any constant $\kappa_X > 0$, we have
\begin{align*}
	\Eb \langle \Eb I_n^{X} - I_n^{X}, \phi_i \rangle ^2 &\leq \Eb \left[ \left( (G_{i,X}(\mf))^2 - \frac{4\kappa_X \lVert f \rVert_\infty^2  (1 + C_{\rbar}^2)}{n} \right)_+ \right]\\
	&\hspace{1em} + \frac{4\kappa_X \lVert f \rVert_\infty^2  (1 + C_{\rbar}^2)}{n}.
\end{align*}
Hence, by Lemma~\ref{l:E:GX}\footnote{Admittedly, using Lemmata~\ref{l:E:GX}, \ref{l:E:Gxi}, and \ref{l:E:Gtilde} here is like using a sledgehammer to crack a nut. At least for the term containing $I_n^X$ we can directly refer to p.~294 in \cite{comte2001adaptive} for an alternative reasoning. For the other terms, one could perform in the same manner with some tedious calculations but we do currently not see how one could establish an upper bound without any logarithmic terms and a better dependence on $\privpar$ than in our current estimate. Note that instead of Assumption~\ref{ass:rbar} we only need to assume that $\lVert \phi_i \rVert_\infty \leq C\sqrt{n}$ for the an orthonormal basis $(\phi_i)_{i \in \Ic_\mf}$ of the considered model. In addition, we can also put $L_\mf = 1$ here since in contrast to the proof of Theorem~\ref{thm:adaptation} no summation over all potenial models is performed.} we get
\begin{equation*}
	\Eb \langle \Eb I_n^{X} - I_n^{X}, \phi_i \rangle ^2 \leq \frac{C (C_{\rbar}, \lVert f \rVert_\infty ) }{n}
\end{equation*}
provided that $\kappa_X$ is sufficiently large.
Analogously, for the terms incorporating $I_n^\xi$ and $\Itilde_n$, we obtain with sufficiently large constants $\kappa_\xi, \kappatilde > 0$ by using Lemmata~\ref{l:E:Gxi} and \ref{l:E:Gtilde}
\begin{equation*}
	\Eb \langle \Eb I_n^{\xi} - I_n^{\xi}, \phi_i \rangle ^2 \leq \kappa_\xi \frac{\tau_n^4 (1+\log(n))}{n\privpar^4} + \frac{C(C_{\rbar}) \tau_n^4}{n^3\privpar^4}
\end{equation*}
and
\begin{equation*}
\Eb \langle \Eb \Itilde_n - \Itilde_n, \phi_i \rangle ^2 \leq \kappatilde (3 + 4\tau_n/\privpar)^4 (1+\lVert f \rVert_\infty^2) (1+ \log(n)) \frac{1}{n} + \frac{C(C_{\rbar}, \lVert f \rVert_\infty) (3+4\tau_n/\privpar)^4}{n^3},
\end{equation*}
respectively.
Putting the obtained estimates together, we get
\begin{equation}\label{eq:upper:A:2}
	\Eb \sum_{i \in \Ic_\mf} \lvert \langle \Eb I_n^{Z'} - I_n^{Z'}, \phi_i \rangle \rvert^2 \leq  D_\mf C (C_{\rbar}, \lVert f \rVert_\infty ) (1+ \log(n)) \left[\frac{1}{n} \vee \frac{\tau_n^4}{n\privpar^4} \right].
\end{equation}
Combining \eqref{eq:upper:A:1} and \eqref{eq:upper:A:2}, we obtain
\begin{equation*}
	\Eb \lVert \fhat_\mf - f_\mf \rVert^2 \1_A \lesssim D_\mf  (1+ \log(n)) \left[\frac{1}{n} \vee \frac{\tau_n^4}{n\privpar^4} \right].
\end{equation*}

\smallskip

\underline{Bound for $\Eb \lVert \fhat_\mf - f_\mf \rVert^2 \1_{\Acomplement}$}:
By the Cauchy-Schwarz inequality, we have
\begin{align}\label{EQ:DECOMP:ACOMP}
	\Eb \lVert \fhat_\mf - f_\mf \rVert^2 \1_{\Acomplement} &\leq ( \Eb \lVert \fhat_\mf - f_\mf \rVert^4 )^{1/2} \cdot ( \Pb(\Acomplement) )^{1/2},
\end{align}
and we analyse the two factors on the right-hand side separately.
First,
\begin{align*}
	\Eb \lVert \fhat_\mf - f_\mf \rVert^4 &= \Eb \left[ \left( \sum_{i \in \Ic_\mf} \lvert \langle f - \Ihat_n, \phi_i \rangle \rvert^2 \right)^2 \right]\\
	&= \Eb \left[ \left( \sum_{i \in \Ic_\mf} \lvert \langle f + \frac{8\tau_n^2}{\privpar^2} - I_n^Z, \phi_i \rangle \rvert^2 \right)^2 \right]\\
	&\leq \Eb \left[ \left( \sum_{i \in \Ic_\mf} \lVert f + \frac{8\tau_n^2}{\privpar^2} - I_n^Z \rVert^2  \right)^2 \right]\\
	&= \Eb \left[ D_\mf^2 \cdot \lVert f + \frac{8\tau_n}{\privpar^2} - I_n^Z \rVert^4 \right].
\end{align*}
Now,
\begin{align*}
	\lVert f + \frac{8\tau_n}{\privpar^2} - I_n^Z \rVert^4 &\leq 4 \pi^2 \cdot \lVert f + \frac{8\tau_n}{\privpar^2} - I_n^Z \rVert_\infty^4\\
	&\leq 32\pi^2 \cdot \lVert f + \frac{8\tau_n^2}{\privpar^2} \rVert_\infty^4 + 32 \pi^2 \lVert I_n^Z \rVert_\infty^4\\
	&\leq 32\pi^2 \left(\frac{1}{2\pi} \sum_{k \in \Z} \lvert \gamma(k) \rvert + \frac{8\tau_n^2}{\privpar^2} \right)^4 + 32\pi^2 \lVert I_n^Z \rVert_\infty^4.
\end{align*}
Furthermore, using $\lvert \Xtilde_t \rvert \leq \tau_n$,
\begin{align*}
	\Eb \left[ \lVert I_n^Z \rVert_\infty^4 \right] &= \Eb \left[ \frac{1}{(2\pi n)^4} \lVert \sum_{t=1}^n (\Xtilde_t - \Xtildebar_n) e^{-\ii t \omega} + \sum_{t=1}^n (\xi_t - \xibar_n) e^{-\ii t \omega} \rVert_\infty^8 \right]\\
	&\leq \frac{2^7}{(2\pi n)^4} \cdot \Eb \left[ \lVert \sum_{t=1}^n (\Xtilde_t - \Xtildebar_n) e^{-\ii t \omega} \rVert_\infty^8 \right] +  \frac{2^7}{(2\pi n)^4} \Eb \left[ \lVert \sum_{t=1}^n (\xi_t - \xibar_n) e^{-\ii t \omega} \rVert_\infty^8 \right]\\
	&\leq \frac{2^7}{(2\pi n)^4} \cdot \Eb \left[ \left( \sum_{t=1}^{n} \lvert \Xtilde_t - \Xtildebar_n \rvert \right)^8 \right] + \frac{2^7}{(2\pi n)^4} \cdot \Eb \left[ \left( \sum_{t=1}^n \lvert \xi_t \rvert + n \lvert \xibar_n \rvert \right)^8 \right]\\
	&\leq \frac{2^{15} n^8\tau_n^8}{(2\pi n)^4} + \frac{2^7}{(2\pi n)^4} \cdot \Eb \left[ \left( 2 \sum_{t=1}^n \lvert \xi_t \rvert \right)^8 \right] \\
	&\leq \frac{2^{11} n^4 \tau_n^8}{\pi^4} + \frac{2^{15}}{(2\pi n)^4} \cdot \Eb \left[ \left( \sum_{t=1}^n \lvert \xi_t \rvert \right)^8 \right]\\
	&= \frac{2^{11} n^4 \tau_n^8}{\pi^4} + \frac{2^{19} \tau_n^8 (n+7) \cdot (n+6) \cdot \ldots \cdot n}{\pi^4 n^4 \privpar^8}\\
	&\lesssim \frac{n^4 \tau_n^8}{1 \wedge \privpar^8}
\end{align*}
where we have also used that $\sum_{t=1}^{n} \lvert \xi_t \rvert \sim \Gamma(n,\privpar/(2\tau_n))$ together with the fact that the $k$-th moment of a $\Gamma(n,\beta)$-distributed random variable is equal to $(n + k - 1)\cdot \ldots \cdots n/\beta^k$. 
Thus,
\begin{equation}\label{EQ:fourth:moment}
	\Eb \lVert \fhat_\mf - f_\mf \rVert^4 \lesssim  D_\mf^2 \cdot \left[ \left( \sum_{k \in \Z} \lvert \gamma(k)\rvert + \frac{\tau_n^2}{\privpar^2}\right)^4 +   \frac{n^4 \tau_n^8}{1 \wedge \privpar^8} \right].
\end{equation}
Putting this bound into \eqref{EQ:DECOMP:ACOMP}, we note that it is sufficient to show that $\Pb(\Acomplement) \lesssim n^{-6}$ to obtain a bound that is bounded from above by the rate obtained for the term $\Eb \lVert \fhat_\mf - f_\mf \rVert^2 \1_{A}$ above.
We will derive such a bound in the following by means of Assumption~\ref{ASS:SUBGAUSS}.
For $n$ sufficiently large (namely $\tau_n > 2\mu$ has to hold) we have by \ref{EQ:CONC:INEQ:SUBGAUSS}
\begin{align*}
	\Pb(\Acomplement) &= \Pb( \exists i : X_i \neq \Xtilde_i)\\
	&\leq \sum_{i=1}^{n} \Pb (X_i \neq \Xtilde_i)\\
	&= \sum_{i=1}^{n} \Pb (\lvert X_i \rvert > \tau_n)\\
	&\leq \sum_{i=1}^{n} \Pb (\lvert X_i - \mu \rvert > \tau_n/2)\\
	&\leq 2\sum_{i=1}^{n} e^{-\frac{\tau_n^2}{8 \nu}}\\
	&\leq 2n e^{-\frac{\tau_n^2}{8 \nu}}.
\end{align*}

With $\tau_n^2 = 56 \nu \log(n)$ (our definition), we obtain $\Pb(\Acomplement) \lesssim n^{-6}$.
Combining this estimate with \eqref{EQ:DECOMP:ACOMP} and \eqref{EQ:fourth:moment}, we obtain desired bound for $\Eb \lVert \fhat_\mf - f_\mf \rVert^2 \1_{\Acomplement}$.
Putting the obtained bounds for the terms $\Eb \lVert \fhat_\mf - f_\mf \rVert^2 \1_{A}$ and  $\Eb \lVert \fhat_\mf - f_\mf \rVert^2 \1_{\Acomplement}$ into the right-hand side of \eqref{eq:decomp:upper} yields the claim of the theorem.

\subsection{Proof of Theorem~\ref{thm:lower} (Lower bounds)}

\subsubsection*{Proof of statement \ref{it:nonprivate:lower}}
	First, note that the minimax risk based on the sample $Z_{1:n}$ can be bounded from below by the one based on the sample $X_{1:n}$:
	\begin{align*}
		\inf_{\substack{\ftilde\\\ftilde = \ftilde (Z_{1:n})}} \sup_{f \in \Fc(\beta,L)} \Eb \lVert \ftilde - f \rVert^2 &= \inf _{\substack{\ftilde\\\ftilde = \ftilde (Q(X_{1:n}))}} \sup_{f \in \Fc(\beta,L)} \Eb \lVert \ftilde - f \rVert^2\\
		&\geq \inf _{\substack{\ftilde\\\ftilde = \ftilde (X_{1:n})}} \sup_{f \in \Fc(\beta,L)} \Eb \lVert \ftilde - f \rVert^2,
	\end{align*}
	because the original infimum on the right-hand side is taken over a smaller set of potential estimators.
	
	Put $\zeta = \min  \left\{ 1/(B \eta), 1/(2\eta), \pi/2 \right\}$.
	For any $\theta = (\theta_j)_{0 \leq j \leq \knast} \in  \{ \pm 1 \}^{\knast +1}$, we consider the function $f^\theta$ defined through
	\begin{align*}
		f^\theta &= \frac{2L}{3} + \theta_0 \left( \frac{L^2 \zeta}{9 n } \right)^{1/2} + \left( \frac{L^2 \zeta}{9 n } \right)^{1/2} \sum_{1 \leq \lvert j \rvert \leq \knast} \theta_{\lvert j \rvert} \e_j\\
		&= \frac{2L}{3} + \left( \frac{L^2 \zeta}{9 n } \right)^{1/2} \sum_{0 \leq \lvert j \rvert \leq \knast} \theta_{\lvert j \rvert} \e_j.
	\end{align*}
	Let us first check whether the functions $f^\theta$ belong to the set $\Fc(\beta, L)$ of admissible functions for any $\theta \in \{ \pm 1 \}^{\knast + 1}$.
	First, $f^ \theta$ is a real-valued function since $\ftheta_j = \ftheta_{-j}$ holds for all $j$ and all $\theta$ by construction.
	Second, $f^\theta$ is non-negative since
	\begin{align*}
		\left \lVert \left( \frac{L^2 \zeta}{9 n } \right)^{1/2} \sum_{0 \leq \lvert j \rvert \leq \knast} \theta_{\lvert j \rvert} \e_j \right \rVert_ \infty &\leq \left( \frac{L^2 \zeta}{9 n } \right)^{1/2} \sum_{0 \leq \absj \leq \knast} 1\\
		&= \left( \frac{L^2 \zeta}{9} \right)^{1/2} \left( \sum_{0 \leq \absj \leq \knast} \beta_j^{-2} \right)^{1/2} \cdot \left( \sum_{0 \leq \absj \leq \knast} \frac{\beta_j^2}{n} \right)^{1/2}\\
		&\leq \left( \frac{L^2 \zeta B}{9} \right)^{1/2}  \cdot \left( \beta_{\knast}^2 \cdot \frac{2\knast + 1}{n} \right)^{1/2}\\
		&\leq \left( \frac{L^2 \zeta B \eta}{9} \right)^{1/2}\\
		&\leq \frac{L}{3},
	\end{align*}
	and hence we even have $\ftheta \geq L/3 \geq 0$ (the fact that the functions $\ftheta$ are uniformly bounded from below will be exploited later). 
	
	Third, $\sum_{j \in \Z} \lvert f^\theta_j \rvert^2 \beta_j^2 \leq L^2$ for any $\theta \in \{ \pm 1 \}^{\knast + 1}$ thanks to the estimate
	\begin{align*}
		\sum_{j \in \Z} \lvert f^\theta_j \rvert^2 \beta_j^2  &= \sum_{0 \leq \absj \leq \knast} \lvert \ftheta_j \rvert^2 \beta_j^2\\
		&= \left[ \frac{2L}{3} + \theta_0 \left( \frac{L^2 \zeta}{9n} \right)^{1/2} \right]^2 + \frac{L^2\zeta}{9} \sum_{1 \leq \absj \leq \knast} \frac{\beta_j^2}{n}\\
		&\leq \frac{8L^2}{9} + \frac{2L^2 \zeta}{9n} + \frac{L^2 \zeta}{9} \cdot \beta_{\knast}^2 \cdot \frac{2\knast}{n}\\
		&\leq \frac{8L^2}{9} + \frac{2L^2 \zeta}{9} \cdot \beta_{\knast}^2 \cdot \frac{2\knast + 1}{n}\\
		&\leq L^2.
	\end{align*}
	Combining the three derived properties ensures $f^\theta \in \Fc(\beta,L)$.
	Denote with $\Pb_\theta$ the law of the snippet $X_{1:n}$ when $(X_t)_{t \in \Z}$ is a stationary time series with zero mean and spectral density $\ftheta$.
	Now, let $\ftilde$ be an arbitrary estimator defined in terms of the snippet $X_{1:n}$.
	Its maximal risk can be bounded from below by reduction to a finite set of hypotheses as follows:
	\begin{align}
		\sup_{f \in \Fc(\beta, L)} \Eb \lVert \ftilde - f \rVert^2 &\geq \sup_{\theta \in \{ \pm 1 \}^{\knast + 1} } \Eb_\theta \lVert \ftilde - f^\theta \rVert^2\notag \\
		&\geq \frac{1}{2^{\knast + 1}} \sum_{\theta \in \{  \pm 1 \}^{\knast +1} } \Eb_\theta \lVert \ftilde - f^\theta \rVert^2\notag\\
		&\geq \frac{1}{2^{\knast + 1}} \sum_{\theta \in \{  \pm 1 \}^{\knast +1} } \sum_{0 \leq \lvert j \rvert \leq \knast} \Eb_\theta [\lvert \ftilde_j - f^\theta_j \rvert^2]\notag\\
		&= \frac{1}{2^{\knast + 1}} \sum_{0 \leq \lvert j \rvert \leq \knast} \sum_{\theta \in \{  \pm 1 \}^{\knast +1} } \frac{1}{2} [  \Eb_\theta \lvert \ftilde_j - f^\theta_j \rvert^2 + \Eb_{\theta^{\lvert j\rvert}} \lvert \ftilde_j - f^{\theta^{\lvert j \rvert}}_j \rvert^2 ],\label{eq:red:sch}
	\end{align}
	where for $\theta \in \{ \pm 1 \}^{\knast + 1}$ and $j \in \llbracket -\knast,\knast \rrbracket$ the element $\theta^{\absj}$ is defined by $\theta^{\absj}_k = \theta_k$ for $k \neq \absj$ and $\theta^{\absj}_{\absj} = -\theta_{\absj}$ ('flip in the j-th coordinate').
	Recall the notion of Hellinger affinity which is defined via $\rho(\Pb_\theta, \Pb_{\theta^{\absj}}) = \int \sqrt{\dd \Pb_\theta \dd \Pb_{\theta^{\absj}}}$.
	For any estimator $\ftilde$, we have
	\begin{align*}
		\rho(\Pb_\theta, \Pb_{\theta^{\absj}}) &\leq  \int \frac{\lvert \ftilde_j - f^\theta_j \rvert}{\lvert f^\theta_j - f^{\theta^{\absj}}_j \rvert} \sqrt{\dd \Pb_\theta \dd \Pb_{\theta^{\absj}}} + \int \frac{\lvert \ftilde_j - f^{\theta^{\absj}}_j \rvert}{\lvert f^\theta_j - f^{\theta^{\absj}}_j \rvert} \sqrt{\dd \Pb_\theta \dd \Pb_{\theta^{\absj}}}\\
		&\leq \left( \int \frac{\lvert \ftilde_j - f^\theta_j \rvert^2}{\lvert f^\theta_j - f^{\theta^{\absj}}_j \rvert^2} \dd \Pb_\theta  \right)^{1/2}  + \left( \int \frac{\lvert \ftilde_j - f^{\theta^{\absj}}_j \rvert^2}{\lvert f^\theta_j - f^{\theta^{\absj}}_j \rvert^2} \dd \Pb_\theta^{\lvert j \rvert} \right)^{1/2},
	\end{align*}
	from which we obtain using the elementary estimate $(a+b)^2 \leq 2a^2 + 2b^2$
	\begin{equation}
	\frac{1}{2} \lvert f^\theta_j - f^{\theta^{\absj}}_j \rvert^2 \rho^2(\Pb_\theta, \Pb_{\theta^{\absj}}) \leq \Eb_\theta \lvert \ftilde_j - f^\theta_j \rvert^2+ \Eb_{\theta^{\absj}} \lvert \ftilde_j - f^{\theta^{\absj}}_j \rvert^2.\label{eq:affinity:bound}
	\end{equation}
	For the squared Hellinger distance between the laws $\Pb_\theta$ and $\Pb_{\theta^{\absj}}$ we obtain
	\begin{align*}
		H^2(\Pb_\theta,\Pb_{\theta^{\absj}}) &\leq K(\Pb_\theta,\Pb_{\theta^{\absj}})\\
		&\leq \left\lvert \Eb_ \theta \log \frac{\dd \Pb_\theta}{\dd  \Pb_{\theta^{\absj}}} \right\rvert\\
		&\leq \frac{n}{4\pi (\min_{\theta} \inf_\omega \ftheta(\omega))^2} \cdot \lVert f^\theta - f^{\theta^{\absj}} \rVert^2\\
		&\leq \frac{9n}{4\pi L^2} \cdot  \left[ \lvert f_j^\theta - f^{\theta^{\absj}}_j \rvert^2 + \lvert f_{-j}^\theta - f^{\theta^{\absj}}_{-j} \rvert^2 \right]
	\end{align*}
	by using Equation~(2.21) from \cite{tsybakov2004introduction}, Lemma~3.4 from \cite{bentkus1985rate}, and the fact that $f^\theta \geq L/3$ (the latter was \emph{en passant} established above).
	Thus, by the very definition of $\zeta$
	\begin{align*}
		H^2(\Pb_\theta,\Pb_{\theta^{\absj}}) &\leq \frac{18n}{\pi L^2} \cdot  \left( \frac{L^2 \zeta}{9n} \right) \leq 1,
	\end{align*}
	and consequently $\rho(\Pb_\theta, \Pb_{\theta^{\absj}}) \geq 1/2$.
	Putting this estimate into \eqref{eq:affinity:bound} and combining it with \eqref{eq:red:sch} yields
	\begin{align*}
		\sup_{f \in \Fc(\beta, L)} \Eb \lVert \ftilde - f \rVert^2 &\geq \frac{1}{4} \sum_{0 \leq \absj \leq \knast} \frac{L^2 \zeta}{9n} = \frac{L^2 \zeta}{36} \cdot \frac{2\knast + 1}{n}
	\end{align*}
	which is the claim assertion.
	
	\subsubsection*{Proof of statement \ref{it:private:lower}}
	Set 
	\begin{equation*}
		\Psinpp^2 = \frac{1}{4} \min \left\{  \frac{\pi}{n(e^\privpar - 1)^2} , \frac{L^2}{4}  \right\}.
	\end{equation*}
	Grant to the general reduction principle for the proof of minimax lower bounds (see Chapitre 2.2 in \cite{tsybakov2004introduction}) it is suffcient to find two candidate functions $f^{0},f^{1}$ such that
	\begin{enumerate}[(i)]
		\item $f^0, f^1 \in \Fc(\beta,L)$,\label{it:check:lower:i}
		\item $\lVert f^0 - f^1 \rVert_2^2 \gtrsim 4 \Psinpp^2$, and\label{it:check:lower:ii}
		\item $\KL(\Pb_f^Z, \Pb_g^Z) \leq C$ for some constant $C < \infty$ depending neither on $\privpar$ nor $n$.\label{it:check:lower:iii}
	\end{enumerate}
	Then, for any estimator $\ftilde$
	\begin{align*}
		\sup_{f \in \Fc(\beta,L)} \Eb \lVert \ftilde - f \rVert_2^2 &\geq \Psinpp^2 \sup_{f \in \Fc(\beta,L)} \Pb(\lVert \ftilde - f \rVert \geq \Psinpp)\\
		&\geq \Psinpp^2 \sup_{\theta \in \{ 0,1 \}} \Pb(\lVert \ftilde - f^\theta \rVert \geq \Psinpp)\\
		&\geq \Psinpp^2 \inf_{T} \max_{\theta \in \{ 0,1 \} } \Pb_\theta(T \neq \theta)
	\end{align*}
	where the last infimum runs over all tests $T$ with values in $\{ 0,1 \}$ and $\Pb_\theta$ denotes the distribution of $Z$ when the true spectral density is $\ftheta$.
	
	Let us define the functions $\ftheta$ for $\theta \in \{ 0,1 \}$ via
	\begin{align*}
		f^0 &\equiv L,\\
		f^1 &\equiv f^0 - \min \left\{ L - \sqrt{\frac{\pi}{n(e^\privpar - 1)^2}} ,L/2 \right\} = L - \min \left\{\sqrt{\frac{\pi}{n(e^\privpar - 1)^2}} ,L/2 \right\},
	\end{align*}
	and we need to verify the conditions \ref{it:check:lower:i}--\ref{it:check:lower:iii}.
	Condition \ref{it:check:lower:i} is trivially satisfied and \ref{it:check:lower:ii} follows from the identity
	\begin{equation*}
		\lVert f^0 - f^1 \rVert_2^2 = (f^0_0 - f^1_0)^2 = \min \left\{  \frac{\pi}{n(e^\alpha - 1)^2} , \frac{L^2}{4}  \right\} = 4\Psinpp^2.
	\end{equation*}
	It remains to prove \ref{it:check:lower:iii}.
	First note that the fact that both candidate spectral densites $\ftheta$ are constant ensures, by Gaussianity, that the random variables $X_1,\ldots,X_n$ are independent.
	Thus, we can apply Corollary~1 from \cite{duchi2018minimax} together with Lemma~3.4 from \cite{bentkus1985rate} and the bound $\TV^2 \leq \KL$ (see (2.21) in \cite{tsybakov2004introduction}, for instance) in order to obtain
	\begin{align*}
		\KL(\Pb^{Z}_0, \Pb^Z_1) &\leq 4 (e^\privpar - 1)^2 \sum_{i=1}^{n} \TV^2(\Pb_0^{X_i}, \Pb_1^{X_i})\\
		&\leq  4(e^\privpar - 1)^2 \sum_{i=1}^{n} \KL(\Pb_0^{X_i}, \Pb_1^{X_i}) \\
		&= 4(e^\privpar - 1)^2 \KL(\Pb_0^{X}, \Pb_1^{X})\\
		&\leq \frac{(e^\privpar - 1)^2n}{\pi (\min_ {\theta = 0,1} \inf_\omega \ftheta(\omega))^2} \cdot \lVert f^0 - f^1 \rVert^2_2\\
		&= \frac{4(e^\privpar - 1)^2n}{\pi L^2} \cdot (f_0 - g_0)^2\\
		&= \frac{4(e^\privpar - 1)^2n}{\pi L^2} \cdot \min \left\{\sqrt{\frac{\pi}{n(e^\privpar - 1)^2}} ,L/2 \right\}^2\\
		&\leq \frac{4(e^\privpar - 1)^2n}{\pi L^2} \cdot \frac{\pi}{n(e^\privpar - 1)^2}\\
		&= 4/L^2.
	\end{align*}
	Now, application of  Th{\' e}or{\`e}me 2.2., (iii) from \cite{tsybakov2004introduction} yields the bound
	\begin{equation*}
		\inf_{T} \max_{\theta \in \{ 0,1 \} } \Pb_\theta(T \neq \theta) \geq \max \left\{ \frac{1}{4} e^{-4/L^2}, \frac{1 - \sqrt{2}/L}{2} \right\}
	\end{equation*}
	which finishes the proof. 
\section{Proofs of Section~\ref{s:adaptation}}
	We define the event $A$ (and its complement) exactly as in the proof of Theorem~\ref{thm:upper:minimax}, namely
	\begin{equation*}
		A = \bigcap_{i=1}^{n} \{ X_i = \Xtilde_i \},
	\end{equation*}
	and consider the decomposition
	\begin{equation*}
		\Eb \lVert \ftilde - f \rVert^2 = \Eb \lVert \ftilde - f \rVert^2 \1_A + \Eb \lVert \ftilde - f \rVert^2 \1_{A^\complement}.
	\end{equation*}
	
	\smallskip
	
	\noindent \underline{Upper bound for $\Eb \lVert \ftilde - f \rVert^2 \1_A$}: We can write the contrast as
	\begin{equation*}
		\Upsilon_n(t) = \lVert t \rVert^2 - 2 \langle \Ihat_n, t \rangle = \lVert t-f \rVert^2 - 2 \langle \Ihat_n - f,t \rangle - \lVert f \rVert^2.
	\end{equation*}
	By the definitions of $\ftilde$ and $\mfhat$combined with the fact that $\fhat_{\mf}$ minimizes the contrast over the space $S_\mf$ the estimate
	\begin{equation*}
		\Upsilon_n(\ftilde) + \pen(\mfhat) \leq \Upsilon_n(f_\mf) + \pen(\mf)
	\end{equation*}
	holds for all $\mf \in \Mc_n$, we obtain
	\begin{equation*}
		\lVert f - \ftilde \rVert^2 - 2 \langle \Ihat_n - f, \ftilde \rangle + \pen(\mfhat) \leq \lVert f - f_\mf \rVert^2 - 2 \langle \Ihat_n - f, f_\mf \rangle + \pen(\mf).
	\end{equation*}
	Then, by elementary algebraic manipulations,
	\begin{align*}
		\lVert f - \ftilde \rVert^2 &\leq \lVert f - f_\mf \rVert^2 + 2 \langle \Ihat_n - f, \ftilde - f_\mf \rangle + \pen(\mf) - \pen(\mfhat)\\
		&\leq \lVert f - f_\mf \rVert^2 + 2 \langle f - \Eb \Ihat_n, f_\mf - \ftilde \rangle + 2 \langle \Ihat_n - \Eb \Ihat_n, \ftilde - f_\mf \rangle + \pen(\mf) - \pen(\mfhat).
	\end{align*}
	On the event $A$, we have $Z'_{1:n}=Z_{1:n}$ and $\Ihat_n = I^Z_n - \frac{8\tau_n^2}{\privpar^2} = I^{Z'}_n - \frac{8\tau_n^2}{\privpar^2}$.
	Hence, on $A$ the identity
	\begin{equation*}
		\langle \Ihat_n - \Eb \Ihat_n, \ftilde - f_\mf \rangle = \langle I_n^{Z'} - \Eb I_n^{Z'}, \ftilde - f_\mf \rangle 
	\end{equation*}
	holds.
	By definition of $I_n^{Z'}$, we have
	\begin{align*}
		I_n^{Z'}(\omega) &= \frac{1}{2\pi n} \left\lvert \sum_{t=1}^n (Z_t^\prime - \Zprimebarn) e^{-\ii t \omega} \right\rvert^2\\
		&= \frac{1}{2\pi n} \left\lvert \sum_{t=1}^n (X_t - \Xbarn) e^{-\ii t \omega} + \sum_{t=1}^n (\xi_t - \xibarn) e^{-\ii t \omega} \right\rvert^2\\
		&= \frac{1}{2\pi n} \left\lvert \sum_{t=1}^n (X_t - \Xbarn) e^{-\ii t \omega} \right\rvert^2 + \frac{1}{2\pi n} \left( \sum_{t=1}^n (X_t - \Xbarn) e^{-\ii t \omega} \right) \left( \sum_{t=1}^n (\xi_t - \xibarn) e^{\ii t \omega} \right) \\
		&\hspace{1em}+ \frac{1}{2\pi n} \left( \sum_{t=1}^n (X_t - \Xbarn) e^{\ii t \omega} \right) \left( \sum_{t=1}^n (\xi_t - \xibarn) e^{-\ii t \omega} \right) + \frac{1}{2\pi n}\left\lvert \sum_{t=1}^n (\xi_t - \xibarn) e^{-\ii t \omega} \right\rvert^2\\
		&\eqdef I_n^X + \Itilde_n + I_n^\xi
	\end{align*}
	(as above $\Itilde_n$ is defined as the sum of the two 'mixed' terms).
	For $\mf, \mf' \in \Mc_n$, set
	\begin{align*}
		G_X(\mf,\mfprime) &= \sup_{u \in \Bc_{\mf, \mfprime}} \langle I_n^X - \Eb I_n^X, u \rangle,\\
		G_\xi(\mf,\mfprime) &= \sup_{u \in \Bc_{\mf, \mfprime}} \langle I_n^\xi - \Eb I_n^\xi, u \rangle,\\
		\Gtilde(\mf,\mfprime) &= \sup_{u \in \Bc_{\mf, \mfprime}} \langle \Itilde_n, u \rangle,
	\end{align*}
	where $\Bc_{\mf, \mfprime}$ denotes the unit ball in $S_\mf + S_{\mfhat}$, and we write $G_X(\mf)$, $G_\xi(\mf)$, and $\Gtilde(\mf)$ when $\mf = \mfprime$.
	We have $G_X(\mf,\mfprime) \leq G_X(\mf) + G_X(\mfprime)$, and the same type of bound holds for $G_\xi$ and $\Gtilde$.
	As a consequence, using the estimate $2xy \leq \tau x^2 + \tau^{-1} y^2$ for $\tau = 16$ we have
	\begin{align*}
		\lVert f - \ftilde \rVert^2 \1_A &\leq  \left( \lVert f - f_\mf \rVert^2 + 2 \langle f - \Eb I^{Z'}_n, f_\mf - \ftilde \rangle + 2 \langle I^{Z'}_n - \Eb I^{Z'}_n, \ftilde - f_\mf \rangle + \pen(\mf) - \pen(\mfhat) \right) \1_A \\
		&= \left(  \lVert f - f_\mf \rVert^2 + 2 \langle f - \Eb I^{Z'}_n, f_\mf - \ftilde \rangle + 2 \langle I_n^X - \Eb I_n^X, \ftilde - f_\mf \rangle +2 \langle I_n^\xi - \Eb I_n^\xi, \ftilde - f_\mf \rangle \right.\\
		&\hspace{1em} \left. + 2 \langle \Itilde_n, \ftilde - f_\mf \rangle + \pen(\mf) - \pen(\mfhat) \right) \1_A \\
		&\leq \left( \lVert f - f_\mf \rVert^2 + 2 \langle f - \Eb I^{Z'}_n, f_\mf - \ftilde \rangle + 2 \lVert \ftilde - f_\mf \rVert G_X(\mf, \mfhat) + 2 \lVert \ftilde - f_\mf \rVert G_\xi(\mf,\mfhat) \right.\\
		&\hspace{1em}\left. + 2\lVert \ftilde - f_\mf \rVert \Gtilde(\mf,\mfhat) + \pen(\mf) - \pen(\mfhat) \right) \1_A \\
		&= ( \lVert f - f_\mf \rVert^2 + \tau \lVert f - \Eb I^{Z'}_n \rVert^2 + 4\tau^{-1} \lVert f_\mf - \ftilde \rVert^2 + \tau G_X^2(\mf,\mfhat) + \tau G_\xi^2(\mf,\mfhat) \\
		&\hspace{1em} + \tau \Gtilde^2(\mf,\mfhat)  + \pen(\mf) - \pen(\mfhat) )  \1_A \\
		&=( \lVert f - f_\mf \rVert^2 + 16 \lVert f - \Eb I^{Z'}_n \rVert^2 + \frac 1 4 \lVert f_\mf - \ftilde \rVert^2 + 32 G_X^2(\mfhat) + 32 G_\xi^2(\mfhat) \\
		&\hspace{1em}+ 32 \Gtilde^2(\mfhat) + 32 G_X^2(\mf) + 32 G_\xi^2(\mf) +  32 \Gtilde^2(\mf)  + \pen(\mf) - \pen(\mfhat) ) \1_A\\
		&\leq (  3 \lVert f - f_\mf \rVert^2/2 + 16 \lVert f - \Eb I^{Z'}_n \rVert^2 + \frac{1}{2} \Eb \lVert f - \ftilde \rVert^2 + 32 G_X^2(\mfhat) + 32 G_\xi^2(\mfhat) \\
		&\hspace{1em} + 32 \Gtilde^2(\mfhat) + 32 G_X^2(\mf) + 32 G_\xi^2(\mf) +  32 \Gtilde^2(\mf) + \pen(\mf) - \pen(\mfhat) ) \1_A.
	\end{align*}
	Hence,
	\begin{align*}
		\lVert f - \ftilde \rVert^2 \1_A &\leq (  3 \lVert f - f_\mf \rVert^2 + 32 \lVert f - \Eb I^{Z'}_n \rVert^2 + 64 G_X^2(\mfhat) + 64 G_\xi^2(\mfhat) \\
		& + 64 \Gtilde^2(\mfhat) + 64 G_X^2(\mf) + 64 G_\xi^2(\mf) +  64 \Gtilde^2(\mf)  + 2\pen(\mf) - 2\pen(\mfhat) )  \1_A.
	\end{align*}
	If the numerical constant in the definition of the penalty is large enought, we can write $\pen(\mf) = \pen_X(\mf) + \pen_\xi(\mf) + \pentilde(\mf)$ such that
	\begin{align*}
		\pen_X(\mf) &\geq 32\kappa_X \lVert f \rVert_\infty (1+ C_{\rbar}^2) \frac{D_{\mf} ( 1 + L_{\mf})^2}{n},\\
		\pen_\xi(\mf) &\geq 32 \kappa_\xi \frac{\tau_n^4D_{\mf} (L_{\mf}^4 + L_\mf + \log(n) ) }{n\privpar^4}, \qquad \text{and}\\
		\pentilde(\mf) &\geq 32 \kappatilde M^4 (1+\lVert f \rVert_\infty)^2 (L_{\mf}^4 + L_\mf + \log(n) ) \frac{D_{\mf}}{n}
	\end{align*}
	holds for any model $\mf \in \Mc_n$.
	Summing over all potential models and taking expectations implies
	\begin{align*}
		\Eb \lVert f - \ftilde \rVert^2 \1_A &\leq 3 \lVert f - f_\mf \rVert^2 + 32 \lVert f - \Eb I^{Z'}_n \rVert^2  + 4 \pen(\mf) \\
		&+ 128 \sum_{\mf \in \Mc_n} \Eb \left[ \left( G_X^2(\mf) - \pen_X(\mf)/32 \right)_+  \right] \\
		&+ 128 \sum_{\mf \in \Mc_n} \Eb \left[ \left( G_\xi^2(\mf) - \pen_\xi(\mf)/32 \right)_+  \right]\\
		&+ 128 \sum_{\mf \in \Mc_n} \Eb \left[ \left( \Gtilde^2(\mf) - \pentilde(\mf)/32 \right)_+  \right].
	\end{align*}
	The expectations are bounded by Lemmata~\ref{l:E:GX}, \ref{l:E:Gxi}, and \ref{l:E:Gtilde}, combined with Assumption~\ref{ass:weights} in order to obtain
	\begin{align*}
		\Eb \lVert f - \ftilde \rVert^2 \1_A &\leq 3 \lVert f - f_\mf \rVert^2 + 32 \lVert f - \Eb I^{Z'}_n \rVert^2  + 4\pen(\mf) \\
		&+ C(C_{\rbar},\lVert f \rVert_\infty) \max \left\lbrace \frac{1}{n}, \frac{\tau_n^4}{n^3 \privpar^4} \right\rbrace.
	\end{align*}
	Finally, by Proposition~\ref{prop:ex:comte} we get (using the same argument as in the proof of Theorem~\ref{thm:upper:minimax})
		\begin{align*}
			\Eb \lVert f - \ftilde \rVert^2 \1_A &\lesssim \lVert f - f_\mf \rVert^2 + \max \left\lbrace  \frac{\tau_n^4}{n \privpar^4} , \frac{1}{n} \right\rbrace  + \pen(\mf) \\
			&\hspace{1em}+ C(C_{\rbar},\lVert f \rVert_\infty) \max \left\lbrace \frac{1}{n}, \frac{\tau_n^4}{n^3 \privpar^4} \right\rbrace.
		\end{align*}
	Since, this estimate holds for any fixed model $\mf$, we can take the infimum over all potential models which yields
	\begin{align*}
		\Eb \lVert f - \ftilde \rVert^2 \1_A &\lesssim \inf_{\mf \in \Mc_n} \left[ \lVert f - f_\mf \rVert^2 , \pen(\mf) \right]  + \max \left\lbrace  \frac{\tau_n^4}{n \privpar^4} , \frac{1}{n} \right\rbrace \\
		&\hspace{1em}+ C(C_{\rbar},\lVert f \rVert_\infty) \max \left\lbrace \frac{1}{n}, \frac{\tau_n^4}{n^3 \privpar^4} \right\rbrace.
	\end{align*}
	
	\noindent \underline{Upper bound for $\Eb \lVert \ftilde - f \rVert^2 \1_{A^\complement}$:}
	This term can be bounded exactly as in the upper bound for any fixed model (the only property of the model that we have exploited in that proof was the fact that $D_{\mf} \leq n$ which holds true also for the randomly selected model $\mfhat$):
\begin{equation*}
	\Eb \lVert \fhat_\mf - f_\mf \rVert^2 \1_{\Acomplement} \lesssim \frac{1}{n}.
\end{equation*} 
\section{Concentration results for the proof of Theorem~\ref{thm:adaptation}}

\subsection{A general chaining argument}

Let $\Sbar$ be a finite dimensional subspace of $L^2 \cap L^\infty$ spanned by some orthonormal basis $(\phi_i)_{i \in \Ic}$.
We denote the dimension $\lvert \Ic \rvert$ of $\Sbar$ with $D$, and define the quantity
\begin{equation*}
	\rbar_\phi = \frac{1}{\sqrt D} \sup_{\beta \in \R^D, \beta \neq 0} \frac{\lVert \sum_{i \in \Ic} \beta_i \phi_i \rVert_\infty}{\lvert \beta \rvert_\infty}.
\end{equation*}
In addition, we define $\rbar$ as the infimum of $\rbar_\phi$ taken over all possible orthonormal bases of $\Sbar$.

\begin{proposition}[Proposition~1 from \cite{birge1998minimum}]\label{prop:bm98}
	Let $\Sbar$ be a $D$-dimensional linear subspace of $L^2 \cap L^\infty$ with its index $\rbar$ defined as above.
	Let $\Bc$ be any ball of radius $\sigma$ in $\Sbar$ and $0 < \delta < \sigma/5$.
	Then there exists a finite set $T \subset \Bc$ which is simultaneously a $\delta$-net for $\Bc$ with respect to the $L^2$-norm and an $\rbar \delta$-net with respect to the $L^\infty$-norm and such that $\lvert T \rvert \leq (6\sigma/\delta)^D$.
\end{proposition}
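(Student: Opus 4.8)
Here is how I would go about it. The plan is to pass to the coordinate space $\R^D$ via a basis realising $\rbar$, to build there one explicit cubic-lattice net whose mesh is tuned so that the \emph{same} finite set is dense in both the $L^2$- and the $L^\infty$-geometry, and finally to bound its cardinality by a volume comparison inside a Euclidean ball.

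First I would fix an orthonormal basis $(\phi_i)_{i\in\Ic}$ of $\Sbar$ realising the infimum defining $\rbar$; such a basis exists because the orthonormal bases of a fixed finite-dimensional space form a compact family on which $\rbar_\phi$ depends continuously. Then $\sum_{i\in\Ic}\beta_i\phi_i\mapsto\beta$ is an isometry from $(\Sbar,\lVert\cdot\rVert)$ onto $(\R^D,\lvert\cdot\rvert_2)$, and the definition of $\rbar_\phi$ reads $\lVert\sum_{i\in\Ic}\beta_i\phi_i\rVert_\infty\le\rbar\sqrt D\,\lvert\beta\rvert_\infty$. Translating so the centre of $\Bc$ becomes the origin — this only translates the net to be constructed — I identify $\Bc$ with the Euclidean ball $\{\lvert\beta\rvert_2\le\sigma\}$ in $\R^D$.

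Next I would put $h=\delta/\sqrt D$ and, for each $\beta$ in that ball, round every coordinate \emph{towards zero} to the nearest multiple of $h$, i.e. take $\beta_i'=h\,\mathrm{sgn}(\beta_i)\lfloor\lvert\beta_i\rvert/h\rfloor$, so that $\beta'\in h\Z^D$. Rounding towards zero never increases $\lvert\beta_i\rvert$, hence $\lvert\beta'\rvert_2\le\lvert\beta\rvert_2\le\sigma$ and $\beta'$ still lies in $\Bc$; moreover $\lvert\beta-\beta'\rvert_\infty<h$, so $\lvert\beta-\beta'\rvert_2<\sqrt D\,h=\delta$ while $\lVert\sum_i(\beta_i-\beta_i')\phi_i\rVert_\infty\le\rbar\sqrt D\,\lvert\beta-\beta'\rvert_\infty<\rbar\delta$ by the displayed bound. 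Thus the finite set $T$ of all points of $h\Z^D$ arising in this way is contained in $\Bc$ and is at once a $\delta$-net of $\Bc$ in $L^2$ and an $\rbar\delta$-net in $L^\infty$. For the size, I would note that the cubes $\beta'+[-h/2,h/2)^D$ with $\beta'\in T$ are pairwise disjoint and, each $\beta'$ lying in the ball of radius $\sigma$, contained in the ball of radius $\sigma+\tfrac12\sqrt D\,h=\sigma+\delta/2$; comparing volumes and using the standard bound $\mathrm{Vol}(B_2^D(R))\le(\sqrt{2\pi e}\,R/\sqrt D)^D$ yields $\lvert T\rvert\le(\sqrt{2\pi e}\,(\sigma/\delta+1/2))^D$, which is $\le(6\sigma/\delta)^D$ as soon as $\delta<\sigma/5$ (since then $\sigma/\delta+1/2<\tfrac{11}{10}\,\sigma/\delta$ and $\tfrac{11}{10}\sqrt{2\pi e}<6$) — this hypothesis is exactly what pins the numerical constant at $6$.

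The main obstacle is conceptual, not computational: a single finite set has to be $\delta$-dense for $L^2$ \emph{and} $\rbar\delta$-dense for $L^\infty$ simultaneously. Since only $\lVert\cdot\rVert_\infty\le\rbar\sqrt D\,\lvert\cdot\rvert_\infty$ is available in general, the $L^\infty$-requirement forces the coefficient mesh down to order $\delta/\sqrt D$ in the sup-metric on coefficients; a direct count of such a grid over the coordinate box $[-\sigma,\sigma]^D$ would then carry a spurious factor $(\sqrt D)^D$, which is fatal, so the real point is to count by Euclidean \emph{volume}, exploiting that the lattice points one actually needs sit inside a ball rather than a box. The secondary care needed is the constraint $T\subset\Bc$, which the device of rounding the coordinates towards zero delivers for free without any further loss in the estimate.
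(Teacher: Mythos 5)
Your proposal is correct, and in fact it supplies a proof that the paper itself omits: Proposition~\ref{prop:bm98} is imported verbatim from Birg\'e and Massart, so there is no in-paper argument to compare against, and your job was to reconstruct the external one. Your route is the standard covering-number argument and every step checks out. Passing to coefficient space through an orthonormal basis attaining $\rbar$ is legitimate (after fixing a reference basis, orthonormal bases are parametrized by the orthogonal group, on which $\rbar_\phi$ is continuous, so the infimum is attained); rounding each coordinate towards zero on the lattice $h\Z^D$ with $h=\delta/\sqrt D$ keeps the net inside $\Bc$ because it can only decrease the Euclidean norm of the (centred) coefficient vector; the single bound $\lvert\beta-\beta'\rvert_\infty<h$ then delivers both the $L^2$ accuracy $\sqrt D\,h=\delta$ and, via $\lVert\sum_i\beta_i\phi_i\rVert_\infty\le\rbar\sqrt D\,\lvert\beta\rvert_\infty$, the $L^\infty$ accuracy $\rbar\delta$, which is exactly the point of the simultaneous net. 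The cardinality count is also sound: the half-open cubes of side $h$ centred at distinct lattice points are disjoint and lie in the Euclidean ball of radius $\sigma+\delta/2$, the volume bound follows from $\Gamma(x+1)\ge(x/e)^x$, and the numerics work since $\sqrt{2\pi e}\approx 4.13$, so $\tfrac{11}{10}\sqrt{2\pi e}<6$ under $\delta<\sigma/5$. Two marginal remarks: the slack between $4.6$ and $6$ means you do not even need exact attainment of the infimum defining $\rbar$ (a basis with $\rbar_\phi\le(1+\epsilon)\rbar$ and a correspondingly finer mesh would still fit the stated bound), and your reading of $\Bc$ as an $L^2$-ball is the intended one, consistent with how the proposition is applied in the paper with $\sigma=1$ and $\Bc_\mf$ the $L^2$-unit ball of $S_\mf$.
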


We will apply Proposition~\ref{prop:bm98} with $\sigma = 1$ which reduces the choice of $\delta$ to $\delta < \frac{1}{5}$.

In the sequel, we will use the following chaining argument.
For $0< \delta_0 < 1/5$ and any $k \in \N$, we set $\delta_k = 2^{-k} \delta_0$ and consider a sequence of $\delta_k$-nets $(T_k)_{k \in \N}$ with $T_k = T_{\delta_k}$.
Then, for any $u \in \Bc_{\mf}$ ($\Bc_\mf$ is defined in the proof of Theorem~\ref{thm:adaptation} as the unit ball in the space $S_\mf$), we are able to find a sequence $(u_k)_{k \geq 0}$ with $u_k \in T_k$ such that $\lVert u - u_k \rVert^2 \leq \delta_k^2$ and $\lVert u-u_k \rVert_\infty \leq \rbar_{\mf} \delta_k$. 
Moreover, one can achieve $\lvert T_k \rvert \leq (6/\delta_k)^{D_\mf}$.
We have the following decomposition:
\begin{equation}\label{eq:dec:u}
	u = u_0 + \sum_{k=1}^\infty (u_k - u_{k-1}).
\end{equation}
From the above properties it follows that $\lVert u_0 \rVert \leq \delta_0$, $\lVert u_0 \rVert_\infty \leq \rbar_{\mf} \delta_0$, and, for $k \geq 1$, $\lVert u_k - u_{k-1} \rVert^2 \leq 2(\delta_k^2 + \delta_{k-1}^2) = 5\delta_{k-1}^2/2$ and $\lVert u_k - u_{k-1} \rVert_\infty \leq 3 \rbar_{\mf} \delta_{k-1}/2$.
These estimates will be used below without further reference.

Let us finally note that we will work with different definitions of $\delta_0$ below.
For the purely Gaussian terms in Subsection \ref{subs:Gauss} it will turn out convenient to choose $0 < \delta_0  < 1/5$ as a numerical constant independent of $n$ whereas for the analysis of the Laplace term in Subsection \ref{subs:Laplace} and the mixed term \ref{subs:mixed} we will need to choose $\delta_0 \asymp n^{-1}$ in order to get better rates (at the cost of slightly worse logarithmic terms).
We put $H_k = \log(\lvert T_k \rvert)$.
Then
\begin{equation*}
	H_k \leq D_{\mf} \log (6/\delta_k) = D_{\mf} [\log(6/\delta_0) + k \log 2]
\end{equation*}
which will be used below without further reference.

\subsection{The Toeplitz matrix $T_n(u)$}

In the following three Subsections~\ref{subs:Gauss}--\ref{subs:mixed} we will consider the following Toeplitz matrix $T_n(u)$ associated with the function $u$ that is given by the entries
\begin{equation*}
[T_n(u)]_{j,k} = \int_{-\pi}^\pi u(\omega) \ee^{\ii \omega (j-k)} \dd \omega, \qquad 1 \leq j,k \leq n.
\end{equation*}
The matrix $T_n(u)$ is always Hermitian but since we consider only symmetric $u$, the same holds true for $T_n(u)$ (which is then real-valued).

\subsection{Gaussian terms}\label{subs:Gauss}

\begin{proposition}\label{prop:prob:Xi:X}
	Put $\Xi_n^X(u) = \langle I_n^X - \Eb I_n^X, u \rangle$. For any symmetric function $u$,
	\begin{equation*}
	\Pb(\Xi_n^X(u) \geq t) \leq 2 \exp \left[  - c \min \left( \frac{4\pi^2nt^2}{9 \lVert f\rVert_\infty^2 \lVert u\rVert^2}, \frac{2\pi nt}{3 \lVert f \rVert_\infty \lVert u \rVert_\infty} \right) \right] .
	\end{equation*}
\end{proposition}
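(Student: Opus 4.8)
The plan is to recognise $\Xi_n^X(u)$ as a centred quadratic form in a Gaussian random vector and then to apply a Hanson--Wright type deviation inequality (recall that in the context relevant here, namely Theorem~\ref{thm:adaptation}, the time series $X$ is Gaussian). First I would expand the periodogram. Writing $\Xb = (X_1,\dots,X_n)^{\transposed}$, $\Xbarn$ for its sample mean, and $P = E_n - \tfrac1n \vec 1\,\vec 1^{\transposed}$ for the centring projection (so that $\Xb - \Xbarn\vec 1 = P(\Xb - \mu\vec 1)$), a direct computation using the definition of the Toeplitz matrix $T_n(u)$ above and the symmetry of $u$ gives
\begin{equation*}
	\langle I_n^X, u\rangle = \frac{1}{2\pi n}\,(\Xb - \mu\vec 1)^{\transposed}\,P\,T_n(u)\,P\,(\Xb - \mu\vec 1).
\end{equation*}
Since $X$ is Gaussian, $Y \defeq \Xb - \mu\vec 1 \sim \Nc(0, T_n(f))$ — the covariance matrix $(\gamma_{\lvert j-k\rvert})_{1\le j,k\le n}$ equals $T_n(f)$ by the inversion formula~\eqref{eq:spec:dens} — so whitening, $Y = T_n(f)^{1/2}G$ with $G \sim \Nc(0, E_n)$, represents
\begin{equation*}
	\Xi_n^X(u) = \frac{1}{2\pi n}\bigl(G^{\transposed} M G - \trace(M)\bigr), \qquad M = T_n(f)^{1/2}\,P\,T_n(u)\,P\,T_n(f)^{1/2},
\end{equation*}
where $M$ is symmetric.

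Next I would invoke the Hanson--Wright inequality for quadratic forms in i.i.d.\ standard Gaussians — a special case of the concentration bounds for quadratic forms in sub-exponential random variables from \cite{goetze2019concentration} — to obtain a numerical constant $c > 0$ with
\begin{equation*}
	\Pb\bigl(\Xi_n^X(u) \ge t\bigr) = \Pb\bigl(G^{\transposed} M G - \trace(M) \ge 2\pi n t\bigr) \le 2\exp\left[ -c\min\left( \frac{(2\pi n t)^2}{\norm{M}_{\mathrm F}^2},\ \frac{2\pi n t}{\norm{M}_{\mathrm{op}}} \right) \right].
\end{equation*}
It then remains to bound $\norm{M}_{\mathrm{op}}$ and $\norm{M}_{\mathrm F}$ in terms of $\norm{f}_\infty$, $\norm{u}_\infty$ and $\norm{u}$.

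For this I would use two standard estimates for a Toeplitz matrix with symmetric symbol $g \in L^\infty([-\pi,\pi])$: for $v \in \R^n$,
\begin{equation*}
	v^{\transposed} T_n(g) v = \int_{-\pi}^{\pi} g(\omega)\Bigl\lvert \sum_{j=1}^n v_j \ee^{\ii j\omega}\Bigr\rvert^2 \dd\omega, \qquad \text{hence}\qquad \norm{T_n(g)}_{\mathrm{op}} = \rho(T_n(g)) \le 2\pi\norm{g}_\infty,
\end{equation*}
and, writing $\hat g_m = \int_{-\pi}^{\pi} g(\omega)\ee^{\ii m\omega}\dd\omega$,
\begin{equation*}
	\norm{T_n(g)}_{\mathrm F}^2 = \sum_{\lvert m\rvert < n} (n - \lvert m\rvert)\,\lvert \hat g_m\rvert^2 \le n\sum_{m \in \Z} \lvert \hat g_m\rvert^2 = 2\pi n\,\norm{g}^2.
\end{equation*}
Since $\norm{P}_{\mathrm{op}} = 1$, these give $\norm{M}_{\mathrm{op}} \le \norm{T_n(f)}_{\mathrm{op}}\,\norm{T_n(u)}_{\mathrm{op}} \le 4\pi^2\norm{f}_\infty\norm{u}_\infty$; and, using $\norm{M}_{\mathrm F}^2 = \trace\bigl((T_n(f)\,P\,T_n(u)\,P)^2\bigr) \le \norm{T_n(f)\,P\,T_n(u)\,P}_{\mathrm F}^2 \le \norm{T_n(f)}_{\mathrm{op}}^2\,\norm{T_n(u)}_{\mathrm F}^2$ (the first inequality is $\trace(A^2) \le \norm{A}_{\mathrm F}^2$, the second uses that $P$ is a projection), one gets $\norm{M}_{\mathrm F}^2 \le (2\pi\norm{f}_\infty)^2 \cdot 2\pi n\,\norm{u}^2$. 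Substituting these two bounds into the deviation inequality and tidying up the $2\pi$ factors would yield the asserted estimate.

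Most of this is routine bookkeeping. The one conceptual step is the whitening $Y = T_n(f)^{1/2}G$, which converts a quadratic form in the \emph{dependent} Gaussians $X_t$ into one in \emph{independent} variables so that \cite{goetze2019concentration} (or classical Hanson--Wright) applies; one then has to check that conjugating $T_n(u)$ by $P$ and by $T_n(f)^{1/2}$ preserves the scalings $\norm{\,\cdot\,}_{\mathrm{op}} \lesssim \norm{u}_\infty$ and $\norm{\,\cdot\,}_{\mathrm F} \lesssim \sqrt n\,\norm{u}$, which is exactly what $\norm{P}_{\mathrm{op}} = 1$ together with the two Toeplitz estimates deliver. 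The only slightly delicate point is then matching the precise numerical constants ($4\pi^2/9$ and $2\pi/3$) in the statement, which is a matter of carrying the Hanson--Wright constant through the factors above.
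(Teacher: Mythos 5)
Your proof is correct and follows essentially the same route as the paper: rewrite $\Xi_n^X(u)$ as a centred quadratic form in the centred vector, whiten via $\Sigma_X^{1/2}$ (the paper leaves $\Sigma_X=T_n(f)$ implicit), apply the Hanson--Wright inequality, and bound the Hilbert--Schmidt and operator norms of the conjugated matrix through $\rho(\Sigma_X)\lesssim\lVert f\rVert_\infty$ and the Toeplitz estimates for $T_n(u)$. The only deviation is harmless bookkeeping: your explicit Toeplitz bounds carry extra factors of $2\pi$ relative to the paper's, which are absorbed into the unspecified constant $c$ in the exponent.
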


\begin{proof}
Denote $X=(X_1,\ldots,X_n)^\top$. First, we can write
\begin{equation*}
\Xi_n^X(u) = \frac{1}{2\pi n} [ (X - \Xbar_n \vec 1)^\top T_n(u) (X - \Xbar_n \vec 1) - \Eb (X - \Xbar_n \vec 1)^\top T_n(u) (X - \Xbar_n \vec 1) ].
\end{equation*}
Let $H$ be the hyperplane orthogonal to the linear subspace generated by the vector $\vec 1$ in $\R^n$.
Note that $X- \Xbar_n \vec 1 = P_H X = P_H \Sigma_X^{1/2} Y$ where $Y \sim \Nc(\vec 0, E_n)$ and $\Sigma_X$ is the covariance matrix of $X_{1:n}$.
Now, we the Hanson-Wright inequality (Proposition~\ref{HWI}) with $A = (\Sigma_X^{1/2})^\top P_H^\top T_n(u) P_H \Sigma_X^{1/2}$.
Since the $Y_i$ are i.i.d. $\sim \Nc(0,1)$, we have $\lVert Y_i \rVert_{\psi_2} \leq \sqrt{8/3} \leq \sqrt{3} = K$.
For the given choice of $A$, we need to bound the quantities $\lVert A \rVert_{\mathrm{HS}}$ and $\lVert A \rVert_{\mathrm{op}}$ appearing on the right-hand side of the Hanson-Wright inequality.
First,
\begin{align*}
\lVert A \rVert_{\mathrm{HS}}^2 &= \trace(A^\top A) =\trace(\Sigma_X^{1/2} P_H^\top T_n(u) P_H \Sigma_X P_H^\top T_n(u) P_H \Sigma_X^{1/2})\\
&= \trace( P_H \Sigma_X P_H^\top T_n(u) P_H \Sigma_X P_H^\top T_n(u))\\
&\leq \lVert f \rVert_\infty^2 \cdot \trace( T_n(u)^2)\\
&\leq n \lVert f \rVert_\infty^2 \lVert u \rVert^2,
\end{align*}
where we have used the bound $\trace((AB)^2) \leq \rho(A)^2 \trace(B^2)$, and the fact that $\trace(T_n(u)^2) \leq n \lVert u \rVert^2$ from p.~284 in \cite{comte2001adaptive}. Second,
\begin{align*}
\lVert A \rVert_{\mathrm{op}} &= \lVert \Sigma_X^{1/2} P_H^\top T_n(u) P_H \Sigma_X^{1/2} \rVert_{\mathrm{op}}\\
&\leq \lVert \Sigma_X^{1/2} \rVert_{\mathrm{op}} \cdot \lVert T_n(u) \rVert_{\mathrm{op}} \cdot \lVert \Sigma_X^{1/2} \rVert_{\mathrm{op}}\\
&= \lVert \Sigma_X \rVert_{\mathrm{op}} \cdot \lVert T_n(u) \rVert_{\mathrm{op}}\\
&= \rho(\Sigma_X) \cdot \rho(T_n(u))\\
&\leq \lVert f \rVert_\infty \cdot \lVert u \rVert_\infty.
\end{align*}
Using these estimates, application of the Hanson-Wright inequality (Proposition~\ref{HWI}) yields
\begin{equation*}
\Pb ( \Xi^X(u) \geq t ) \leq 2 \exp \left[  - c \min \left( \frac{4\pi^2nt^2}{9 \lVert f\rVert_\infty^2 \lVert u\rVert^2}, \frac{2\pi nt}{3\lVert f \rVert_\infty \lVert u \rVert_\infty} \right) \right].
\end{equation*}
\end{proof}

\begin{lemma}\label{l:E:GX} For any fixed model $\mf \in \Mc_n$ and a sufficiently large constant $\kappa_X > 0$, we have
	\begin{equation*}
		\Eb \left[ \left( (G^X(\mf))^2 - \kappa_X \lVert f \rVert_\infty^2  (1 + C_{\rbar}^2)\frac{D_{\mf} (1 + L_{\mf})^2}{n}  \right)_+ \right]  \lesssim e^{- L_{\mf} D_{\mf}} \cdot \frac{C (C_{\rbar}, \lVert f \rVert_\infty ) }{n}.
	\end{equation*}
\end{lemma}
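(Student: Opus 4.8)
\noindent\emph{Proof plan.} The plan is to run a Birgé--Massart type chaining argument on $G^X(\mf)=\sup_{u\in\Bc_\mf}\Xi_n^X(u)$, where $\Xi_n^X(u)=\langle I_n^X-\Eb I_n^X,u\rangle$ obeys the Bernstein-type tail bound of Proposition~\ref{prop:prob:Xi:X}. I would use the chaining setup of the preceding subsection with $\delta_0<1/5$ chosen as a purely numerical constant: every $u\in\Bc_\mf$ is written as $u=u_0+\sum_{k\ge1}(u_k-u_{k-1})$ with $u_k$ in a $\delta_k$-net $T_k$, $\lvert T_k\rvert\le(6/\delta_k)^{D_\mf}$, $\lVert u_0\rVert\le\delta_0$, $\lVert u_0\rVert_\infty\le\rbar_\mf\delta_0$, $\lVert u_k-u_{k-1}\rVert^2\le\tfrac52\delta_{k-1}^2$ and $\lVert u_k-u_{k-1}\rVert_\infty\le\tfrac32\rbar_\mf\delta_{k-1}$. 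This yields
\[
G^X(\mf)\le\sup_{w\in T_0}\Xi_n^X(w)+\sum_{k\ge1}\ \sup_{\substack{w\in T_k\\ w'\in T_{k-1}}}\Xi_n^X(w-w'),
\]
and throughout I would use Assumption~\ref{ass:rbar} in the form $\rbar_\mf\le C_{\rbar}\sqrt{n/D_\mf}$, so that every $w$ appearing above satisfies $\lVert w\rVert_\infty\lesssim C_{\rbar}\sqrt{n/D_\mf}\cdot\lVert w\rVert$.

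For a free parameter $z>0$ I would then give the $k$-th term a threshold $t_k=b_k+z_k$ with $\sum_k b_k=:a$ and $\sum_k z_k=z$, where $(b_k)$ and $(z_k)$ decay geometrically ($b_k\asymp\delta_{k-1}$ up to $\sqrt{\cdot}$ factors, $z_k\asymp2^{-k}z$). At level $0$ the union bound over $\lvert T_0\rvert\le(6/\delta_0)^{D_\mf}$ points together with Proposition~\ref{prop:prob:Xi:X} (using $\lVert w\rVert\le\delta_0\asymp1$ and $\lVert w\rVert_\infty\lesssim C_{\rbar}\sqrt{n/D_\mf}$) and the elementary splitting $\exp(-c\min(a_1+a_2,b_1+b_2))\le\exp(-c\min(a_1,b_1))\exp(-c\min(a_2,b_2))$ gives a product of a factor controlled by $b_0$ and a residual factor controlled by $z_0$. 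Choosing $b_0\asymp\frac{\lVert f\rVert_\infty}{\sqrt n}\sqrt{(1+C_{\rbar}^2)D_\mf}\,(1+L_\mf)$ makes the first factor $\le e^{-L_\mf D_\mf}$, while the $z_0$-factor is $\exp\bigl(-c\min(nz^2/\lVert f\rVert_\infty^2,\ \sqrt n\sqrt{D_\mf}\,z/(\lVert f\rVert_\infty C_{\rbar}))\bigr)$; note that the powers $2^{-k}$ cancel in the ratios $z_k/\lVert w\rVert$ and $z_k/\lVert w\rVert_\infty$, so this residual is the same at every level. Treating levels $k\ge1$ identically with $H_k\le2D_\mf(\log(6/\delta_0)+k\log2)$ and summing the geometric thresholds gives $a\lesssim\frac{\lVert f\rVert_\infty}{\sqrt n}\sqrt{(1+C_{\rbar}^2)D_\mf}\,(1+L_\mf)$, hence $a^2\lesssim\lVert f\rVert_\infty^2(1+C_{\rbar}^2)D_\mf(1+L_\mf)^2/n$, and (after, if needed, enlarging $a$ by a fixed numerical factor so that $c\min(na^2/\lVert f\rVert_\infty^2,\ \sqrt n\sqrt{D_\mf}\,a/(\lVert f\rVert_\infty C_{\rbar}))\ge L_\mf D_\mf+D_\mf$)
\[
\Pb\bigl(G^X(\mf)\ge a+z\bigr)\le C\,e^{-L_\mf D_\mf}\exp\Bigl(-c\min\Bigl(\tfrac{nz^2}{\lVert f\rVert_\infty^2},\ \tfrac{\sqrt n\sqrt{D_\mf}\,z}{\lVert f\rVert_\infty C_{\rbar}}\Bigr)\Bigr),\qquad z>0.
\]

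Finally I would choose $\kappa_X$ so large that $\Theta:=\kappa_X\lVert f\rVert_\infty^2(1+C_{\rbar}^2)D_\mf(1+L_\mf)^2/n\ge4a^2$; then $((G^X(\mf))^2-\Theta)_+\le(G^X(\mf))^2\,\1_{\{G^X(\mf)\ge2a\}}$, so
\[
\Eb\bigl[((G^X(\mf))^2-\Theta)_+\bigr]\le4a^2\,\Pb(G^X(\mf)\ge2a)+\int_{2a}^\infty 2v\,\Pb(G^X(\mf)>v)\,dv.
\]
Plugging in the tail bound with $v=a+w$, $w\ge a$, and splitting the integral at $w^\ast:=\lVert f\rVert_\infty\sqrt{D_\mf}/(\sqrt n\,C_{\rbar})$ into the sub-Gaussian and the sub-exponential regime, each integral is dominated by its lower endpoint: the Gaussian part is $\lesssim\frac{\lVert f\rVert_\infty^2}{n}e^{-c na^2/\lVert f\rVert_\infty^2}$ and the exponential part $\lesssim\frac{\lVert f\rVert_\infty^2C_{\rbar}^2}{nD_\mf}e^{-c\sqrt n\sqrt{D_\mf}\,a/(\lVert f\rVert_\infty C_{\rbar})}$. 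Both exponents are $\ge L_\mf D_\mf+D_\mf$, and any polynomial in $D_\mf(1+L_\mf)$ (appearing through $a^2$ in the $4a^2\Pb(\cdot)$ term) is absorbed by $e^{-D_\mf(1+L_\mf)}$ since $x\mapsto x^p e^{-x}$ is bounded on $[0,\infty)$ and $D_\mf\ge1$; hence all three contributions are $\lesssim e^{-L_\mf D_\mf}\lVert f\rVert_\infty^2(1+C_{\rbar}^2)/n$, which is the claim with $C(C_{\rbar},\lVert f\rVert_\infty)\asymp\lVert f\rVert_\infty^2(1+C_{\rbar}^2)$.

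The main obstacle is the bookkeeping in the chaining step: the geometric sequences $(b_k)$ and $(z_k)$ must be chosen so that simultaneously (i) $\sum_k b_k$ has exactly the $\sqrt{(1+C_{\rbar}^2)D_\mf}\,(1+L_\mf)$ scaling that fits under the penalty threshold, (ii) each level's union bound still carries a full factor $e^{-L_\mf D_\mf}$, which forces $b_k$ to scale like $\sqrt{D_\mf(1+L_\mf)}$ in the sub-Gaussian regime and like $D_\mf(1+L_\mf)$ in the sub-exponential regime at level $0$, and (iii) the series $\sum_k2^{-k}$ of residual tail probabilities converges while the $z$-dependence stays uniform in $k$; one must also keep careful track of which of the two Bernstein regimes of Proposition~\ref{prop:prob:Xi:X} is active at each stage. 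Once these choices are fixed, the remaining estimates are routine.
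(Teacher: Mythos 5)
Your proposal is correct and follows essentially the same route as the paper's proof: the same Birg\'e--Massart net chaining with a constant $\delta_0$, the tail bound of Proposition~\ref{prop:prob:Xi:X} applied levelwise with a deterministic threshold matching the penalty plus a geometric deviation budget, Assumption~\ref{ass:rbar} to control $\rbar_\mf$, and a final integration of the tail to bound the positive part. The only differences are cosmetic bookkeeping (parametrizing the deviation by its value $z$ rather than by the exponent $\lambda$, and integrating $2v\,\Pb(G^X(\mf)>v)$ instead of $\int_0^\infty\Pb((G^X(\mf))^2>\Theta+u)\,\dd u$), so no changes are needed.
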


\begin{proof}
	We consider a sequence $(\eta_k)_{k \geq 0}$ of positive numbers and $\eta \geq \sum_{k \geq 0}\eta_k$ (these quantities will be specified later on).
	Then, using the decomposition \eqref{eq:dec:u},
	\begin{align*}
		\Pb(\sup_{u \in B_{\mf}} \Xi^X_n(u) > \eta) &=\Pb \left[ \exists (u_k)_{k \geq 0} \in \prod_{k \geq 0} T_k : \Xi^X(u_0) + \sum_{k \geq 1} \Xi_n^X(u_k - u_{k-1}) > \eta_0 + \sum_{k \geq 1} \eta_k \right] \\
		&\leq P_1 + P_2,
	\end{align*}
	where
	\begin{align*}
		P_1 &= \sum_{u_0 \in T_0} \Pb(\Xi^X_n(u_0) > \eta_0),\\
		P_2 &= \sum_{k\geq 1} \sum_{\substack{u_{k-1} \in T_{k-1}\\u_k \in T_k}} \Pb ( \Xi_n^X(u_k - u_{k-1}) > \eta_k ).
	\end{align*}
	For any $u_0 \in T_0$, we obtain from Proposition~\ref{prop:prob:Xi:X} that
	\begin{align*}
		\Pb(\Xi^X_n(u_0) > \eta_0) &\leq 2 \exp \left( - c  \min \left( \frac{4\pi^2 n\eta_0^2}{9 \lVert f \rVert_\infty^2 \delta_0^2} , \frac{2\pi n\eta_0}{3 \lVert f\rVert_\infty \rbar_{\mf} \delta_0} \right)  \right),
	\end{align*}
	and hence
	\begin{equation*}
		P_1 \leq 2 \exp(H_0) \exp \left( - c  \min \left( \frac{4\pi^2 n\eta_0^2}{9 \lVert f \rVert_\infty^2 \delta_0^2} , \frac{2\pi n\eta_0}{3 \lVert f\rVert_\infty \rbar_{\mf} \delta_0} \right) \right).
	\end{equation*}
	For $\lambda > 0$, we consider $\eta_0$ such that
	\begin{equation*}
		c  \min \left( \frac{n\eta_0^2}{9 \lVert f \rVert_\infty^2 \delta_0^2} , \frac{n\eta_0}{3 \lVert f\rVert_\infty \rbar_{\mf} \delta_0} \right) \geq H_0 + L_{\mf}D_{\mf} + \lambda,
	\end{equation*}
	that is,
	\begin{equation*}
		\eta_0 = C \lVert f \rVert_\infty \delta_0 \cdot \max \left(\sqrt{\frac{H_0 + L_{\mf}D_{\mf} + \lambda}{n}}, \frac{\rbar_{\mf} (H_0 + L_{\mf}D_{\mf} + \lambda)}{n} \right).
	\end{equation*}
	for some sufficiently large constant $C > 0$.
For any $k \geq 1$, we get from Proposition~\ref{prop:prob:Xi:X} with $u_{k-1} \in T_{k-1}$ and $u_k \in T_k$
\begin{align*}
	\Pb ( \Xi_n^X(u_k - u_{k-1}) > \eta_k) &\leq 2 \exp \left( - c\min \left( \frac{8\pi^2 n\eta_k^2}{45 \lVert f\rVert_\infty^2 \delta_{k-1}^2}, \frac{4\pi n\eta_k}{9\lVert f\rVert_\infty \rbar_{\mf} \delta_{k-1}} \right)  \right).
\end{align*}
Here, for $\lambda \geq 0$, we choose the $\eta_k$ such that
\begin{equation*}
	c\min \left( \frac{8 \pi^2 n\eta_k^2}{45 \lVert f\rVert_\infty^2 \delta_{k-1}^2}, \frac{4\pi n\eta_k}{9\lVert f\rVert_\infty \rbar_{\mf} \delta_{k-1}} \right)  \geq H_{k-1} + H_k + kD_{\mf} + L_{\mf} D_{\mf} + \lambda
\end{equation*}
which in turn is satisfied whenever
\begin{equation*}
	\eta_k = C \lVert f \rVert_\infty \delta_{k-1} \max \left( \sqrt{\frac{H_{k-1} + H_k + k D_{\mf} + L_{\mf}D_{\mf} + \lambda}{n}}, \frac{\rbar_{\mf} (H_{k-1} + H_k + k D_{\mf} + L_{\mf}D_{\mf} + \lambda)}{n} \right)
\end{equation*}
for some sufficiently large constant $C>0$.
Under this choice of $(\eta_k)_{k \geq 0}$, we obtain for $\eta \geq \sum \eta_k$ (using the assumption that $D_{\mf} \geq 1$)
\begin{align*}
	\Pb(\sup_{u \in B_{\mf}} \Xi^X_n(u) > \eta) &\leq P_1 + P_2\\
	&\leq 2 \exp(-L_{\mf}D_{\mf}- \lambda) + 2 \sum_{k \geq 1} \exp(-kD_{\mf}- L_{\mf}D_{\mf} - \lambda)\\
	&= 2 \exp (- L_{\mf}D_{\mf} - \lambda) \left[ 1 + \sum_{k\geq 1} e^{-kD_{\mf}} \right]\\
	&\leq 3.2 \exp (- L_{\mf}D_{\mf} - \lambda).
\end{align*}
We compute a bound for $\sum_{k \geq 0} \eta_k$, and take $0< \delta_0<1/5$ as a purely numerical constant from now on.\small
\begin{align*}
	\left( \sum_{k \geq 0} \eta_k \right)^2 &\leq C \lVert f \rVert_\infty^2 \left( \delta_0 \sqrt{\frac{H_0 + L_{\mf}D_{\mf} + \lambda}{n}} + \sum_{k \geq 1} \delta_{k-1} \sqrt{\frac{H_{k-1} + H_k + kD_{\mf} + L_{\mf}D_{\mf} + \lambda}{n}} \right.\\
	&\left.+ \delta_0 \frac{\rbar_{\mf} (H_0  + L_{\mf}D_{\mf} + \lambda)}{n} + \sum_{k \geq 1} \delta_{k-1} \frac{\rbar_{\mf} (H_{k-1} + H_k + kD_{\mf} + L_{\mf}D_{\mf} + \lambda)}{n} \right)^2\\
	&\leq C \lVert f \rVert_\infty^2 \left( \frac{1}{n}(\delta_0 + \sum_{k \geq 1} \delta_{k-1}) (\delta_0 (H_0 + L_{\mf}D_{\mf} + \lambda) + \sum_{k \geq 1} \delta_{k-1} (H_{k-1} + H_k + kD_{\mf} + L_{\mf}D_{\mf} + \lambda)) \right.\\
	&\left. + \frac{\rbar^2_{\mf}}{n^2} \left( \delta_0(H_0 + L_{\mf}D_{\mf} + \lambda) + \sum_{k \geq 1}\delta_{k-1} H_{k-1} + H_k + kD_{\mf} + L_{\mf}D_{\mf} + \lambda \right)^2 \right)\\
	&\leq C \lVert f \rVert_\infty^2 \left[ \left(  \frac{D_{\mf} + D_{\mf} L_{\mf} + \lambda}{n} \right) + \frac{\rbar^2_{\mf}}{n^2} (D_{\mf}^2 + D_{\mf}^2L_{\mf}^2 + \lambda^2) \right]  \\
	&\leq C \lVert f \rVert_\infty^2 \left[  \frac{D_{\mf}(1 + L_{\mf})}{n} + \frac{\lambda}{n} + \frac{C_{\rbar}^2 D_{\mf}(1+ L_{\mf}^2)}{n} +  \frac{\rbar^2_{\mf} \lambda^2}{n^2} \right]\\
	&\leq \kappa_X \lVert f \rVert_\infty^2  (1 + C_{\rbar}^2)\frac{D_{\mf} (1 + L_{\mf})^2}{n} + 2 \left[ \frac{\lambda}{n} \vee \frac{\rbar^2_{\mf} \lambda^2}{n^2} \right] 
\end{align*}
for some numerical constant $\kappa_X$.
Then,
\begin{align*}
  \Eb &\left[ \left( (G^X(\mf))^2 - \kappa_X \lVert f \rVert_\infty^2  (1 + C_{\rbar}^2)\frac{D_{\mf} (1 + L_{\mf})^2}{n}  \right)_+ \right]\\
  &= \int_0^\infty \Pb \left( (G^X(\mf))^2 > \kappa_X \lVert f \rVert_\infty^2  (1 + C_{\rbar}^2)\frac{D_{\mf} (1 + L_{\mf})^2}{n} + u \right) \dd u\\
  &\leq e^{-L_{\mf}D_{\mf}} \left( \int_{2\kappa_X \lVert f \rVert_\infty^2/\rbar_{\mf}^2}^\infty e^{-nu/(2\kappa_X \lVert f \rVert_\infty^2)} \dd u + \int_0^{2\kappa_X \lVert f \rVert_\infty^2/\rbar_{\mf}^2} e^{-n\sqrt u /(2 \sqrt{\kappa_X} \rbar_{\mf} \lVert f \rVert_\infty)} \dd u \right)\\
  &\leq e^{-L_{\mf}D_{\mf}} \cdot \frac{2\kappa_X \lVert f \rVert_\infty^2}{n} \left( \int_0^\infty e^{-v} \dd v + \frac{2\rbar_{\mf}}{n} \int_0^\infty e^{-\sqrt v} \dd v \right)\\
  &\lesssim e^{-L_{\mf}D_{\mf}} \frac{C(C_{\rbar}, \lVert f \rVert_\infty)}{n} 
\end{align*}
which is the claim.
\end{proof}

\subsection{Subexponential terms}\label{subs:Laplace}

\begin{proposition}\label{prop:prob:Xi:xi}
	Let $\Xi_n^\xi(u) = \langle I_n^\xi - \Eb I_n^\xi, u \rangle$.
	For any symmetric function $u$,
	\begin{equation*}
	\Pb(\Xi_n^\xi(u) \geq t) \leq 2 \exp \left(  - \frac{1}{C} \min \left(  \frac{\pi^2 n t^2 \privpar^4}{64 \tau_n^4 \lVert u \rVert^2}, \frac{\sqrt{2\pi nt}\privpar}{4\tau_n \lVert u \rVert_\infty^{1/2}} \right) \right).
	\end{equation*}
\end{proposition}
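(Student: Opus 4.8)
The plan is to replicate the argument for Proposition~\ref{prop:prob:Xi:X}, with the Hanson--Wright inequality replaced by its analogue for quadratic forms in sub-exponential random variables from \cite{goetze2019concentration}. First I would rewrite $\Xi_n^\xi(u)$ as a centred quadratic form in the i.i.d. noise vector $\xi = (\xi_1,\ldots,\xi_n)^\top$. Exactly as in the Gaussian case one has $\xi - \bar\xi_n \vec 1 = P_H \xi$, with $H$ the hyperplane orthogonal to $\vec 1$ in $\R^n$, so that
\[
\Xi_n^\xi(u) = \frac{1}{2\pi n}\bigl( \xi^\top P_H^\top T_n(u) P_H \xi - \Eb[\xi^\top P_H^\top T_n(u) P_H \xi] \bigr).
\]
Since $\xi_i \sim \Laplace(2\tau_n/\privpar)$ has variance $8\tau_n^2/\privpar^2$, I would standardise by writing $\xi = (2\sqrt 2\,\tau_n/\privpar)\,W$, where $W$ has i.i.d. centred, unit-variance Laplace coordinates, each with $\lVert W_i\rVert_{\psi_1}$ bounded by a numerical constant. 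With $A \defeq \frac{4\tau_n^2}{\pi n \privpar^2}\, P_H^\top T_n(u) P_H$ --- a real symmetric matrix, since $u$ is symmetric --- this reads $\Xi_n^\xi(u) = W^\top A W - \Eb[W^\top A W]$, and the linear part of this degree-two chaos drops out because $\Eb W_i = 0$.

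Next I would control the two matrix norms entering the concentration bound, reusing the Toeplitz estimates from \cite{comte2001adaptive} (p.~284) exactly as in the proof of Proposition~\ref{prop:prob:Xi:X}. From $\lVert P_H^\top T_n(u) P_H \rVert_{\mathrm{HS}} \leq \lVert T_n(u)\rVert_{\mathrm{HS}}$ and $\trace(T_n(u)^2) \leq n\lVert u\rVert^2$ one gets $\lVert A\rVert_{\mathrm{HS}}^2 \leq \frac{16\tau_n^4}{\pi^2 n \privpar^4}\lVert u\rVert^2$, and from $\rho(P_H^\top T_n(u) P_H) \leq \rho(T_n(u)) \leq \lVert u\rVert_\infty$ one gets $\lVert A\rVert_{\mathrm{op}} \leq \frac{4\tau_n^2}{\pi n\privpar^2}\lVert u\rVert_\infty$. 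Inserting these into the degree-two, $\psi_1$ case of the concentration inequality of \cite{goetze2019concentration}, which yields a tail of the form $2\exp\bigl(-\tfrac1C \min\bigl(t^2/\lVert A\rVert_{\mathrm{HS}}^2,\ (t/\lVert A\rVert_{\mathrm{op}})^{1/2}\bigr)\bigr)$ --- the square-root exponent in the operator-norm regime being precisely the price of sub-exponential rather than sub-Gaussian tails for a quadratic chaos --- and absorbing numerical constants into $C$ gives the asserted inequality: the first term of the displayed minimum originates from the Hilbert--Schmidt regime and the second, $\propto \sqrt{n t}\,\privpar/(\tau_n \lVert u\rVert_\infty^{1/2})$, from the operator-norm regime. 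The one-sided bound is read off from the two-sided estimate for $\lvert W^\top A W - \Eb[W^\top A W]\rvert$.

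The main obstacle is invoking \cite{goetze2019concentration} in the correct form: one has to check that its hypotheses hold (independence, and a dimension-free $\psi_1$-bound on the standardised Laplace coordinates), that after the vanishing of the linear term the mixed norms of the chaos collapse to exactly $\lVert A\rVert_{\mathrm{HS}}$ and $\lVert A\rVert_{\mathrm{op}}$, and that the resulting exponent reduces to the minimum displayed above --- in particular that any intermediate term in the G\"otze--Sambale--Sinulis bound is dominated by these two (as it is, since $\lVert A\rVert_{\mathrm{HS}} \geq \lVert A\rVert_{\mathrm{op}}$). Everything else --- the norm estimates and the tracking of absolute constants --- is routine given the Toeplitz bounds already recorded in \cite{comte2001adaptive}.
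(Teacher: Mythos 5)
Your proposal is correct and follows essentially the same route as the paper: the same reduction $\xi-\bar\xi_n\vec 1 = P_H\xi$ to a centred quadratic form, the same Toeplitz bounds $\trace(T_n(u)^2)\le n\lVert u\rVert^2$ and $\rho(T_n(u))\le\lVert u\rVert_\infty$, and the same $\beta=1$ case of Proposition~\ref{prop:goetze}. The only cosmetic difference is that you standardise the Laplace noise to unit variance and absorb the scalars into the matrix $A$, whereas the paper applies the inequality directly to the $\xi_i$ with $\lVert\xi_i\rVert_{\psi_1}\le 4\tau_n/\privpar$ playing the role of $M$; the resulting exponents agree.
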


\begin{proof}
	Let $H$ be the hyperplane orthogonal to the space generated by the vector $\vec 1$ in $\R^n$.
	Then, for $\xi = (\xi_1,\ldots,\xi_n)^\top$, $\xi - \xibar_n \vec 1 = P_H \xi$.
	We have
	\begin{align*}
	\Xi_n^\xi(u) &= \frac{1}{2\pi n} [(P_H \xi)^\top T_n(u) P_H \xi - \Eb (P_H \xi)^\top T_n(u) P_H \xi]. 
	\end{align*}
	
	We will now use Proposition~\ref{prop:goetze} from Appendix~\ref{app:aux} which is taken from \cite{goetze2019concentration}.
	More precisely, we would like to apply this result with our $\xi_i$ playing the role of the $X_i$, with $A = P_H^\top T_n(u) P_H$, and $\beta = 1$.
	We have $\Eb \xi_i^2  = \sigma_i^2 = 8\tau_n^2/\privpar^2$ for all $i \in \llbracket 1, n\rrbracket$.
	Moreover $\lVert \xi_i \rVert_{\Psi_1} \leq 4\tau_n / \privpar$ which will play the role of $M$.
	The last estimate is easily derived using the fact that $\lvert \xi_i \rvert$ obeys an exponential distribution with parameter $\lambda = \privpar/(2\tau_n)$ and then considering the moment generating function for the exponential distribution.
	It remains to bound the quantities $\lVert A \rVert_{\mathrm{HS}}$ and $\lVert A \rVert_{\mathrm{op}}$.
	First,
	\begin{align*}
	\lVert A \rVert_{\mathrm{HS}}^2 = \trace(A^\top A) &= \trace(P_H^\top T_n(u) P_H P_H^\top T_n(u) P_H)\\
&= \trace((P_H P_H^\top T_n(u))^2) \qquad [\text{cyclic property}]\\
	&= \rho(P_H P_H^\top)^2 \cdot \trace(T_n(u)^2) \qquad [\text{since } \trace((MN)^2) \leq \rho(M)^2  \trace(N^2)]\\
	&\leq \trace(T_n(u)^2).
	\end{align*}
	Using the same argument as on p.~284 in \cite{comte2001adaptive}, we have $\trace(T_n(u)^2) \leq n \lVert u \rVert^2$, and hence
	\begin{equation*}
	\lVert A \rVert_{\mathrm{HS}}^2 \leq n \lVert u \rVert^2.
	\end{equation*}
	Second, for $\lVert A \rVert_{\mathrm{op}}$ have the bound
	\begin{equation*}
	\lVert A \rVert_{\mathrm{op}} = \rho(A)  \leq \rho (T_n(u)) \leq \lVert u \rVert_\infty.
	\end{equation*}
	Thus, we finally obtain
	\begin{equation*}
	\Pb(\Xi_n^\xi(u) \geq t) \leq 2 \exp \left(  - \frac{1}{C} \min \left(  \frac{\pi^2 n t^2 \privpar^4}{64 \tau_n^4 \lVert u \rVert^2}, \frac{\sqrt{2\pi nt}\privpar}{4\tau_n \lVert u \rVert_\infty^{1/2}} \right) \right)
	\end{equation*}
	which is the claim assertion.
\end{proof}

\begin{lemma}\label{l:E:Gxi}
	For any fixed model $\mf \in \Mc_n$ and a sufficiently large constant $\kappa_\xi > 0$, we have
	\begin{equation*}
	\Eb \left[ \left( (G^\xi(\mf))^2 - \kappa_\xi \frac{\tau_n^4D_{\mf} (L_{\mf}^4 + L_\mf + \log(n) ) }{n\privpar^4}  \right)_+ \right] \lesssim e^{-L_{\mf}D_{\mf}} \frac{C(C_{\rbar}) \tau_n^4}{n^3 \privpar^4}.
	\end{equation*}
\end{lemma}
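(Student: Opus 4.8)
Here is how I would prove Lemma~\ref{l:E:Gxi}.

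The plan is to repeat the chaining argument from the proof of Lemma~\ref{l:E:GX} almost verbatim, with the Gaussian tail bound of Proposition~\ref{prop:prob:Xi:X} replaced by the sub-exponential tail bound of Proposition~\ref{prop:prob:Xi:xi}, and with one essential change in the chaining radius: I would take $\delta_0 \asymp n^{-1}$ rather than a numerical constant, as announced in the discussion following \eqref{eq:dec:u}. Fixing the model $\mf$ and writing $\delta_k = 2^{-k}\delta_0$, this choice gives $H_k \leq D_\mf[\log(6/\delta_0) + k\log 2] \lesssim D_\mf(\log n + k)$. Using the decomposition \eqref{eq:dec:u} for $u \in B_\mf$, I would bound $\Pb(\sup_{u \in B_\mf}\Xi_n^\xi(u) > \eta) \leq P_1 + P_2$, where $P_1 = \sum_{u_0 \in T_0}\Pb(\Xi_n^\xi(u_0) > \eta_0)$ and $P_2 = \sum_{k \geq 1}\sum_{u_{k-1} \in T_{k-1},\, u_k \in T_k}\Pb(\Xi_n^\xi(u_k - u_{k-1}) > \eta_k)$, exactly as in the Gaussian case.

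The next step is to apply Proposition~\ref{prop:prob:Xi:xi} to each single-point probability, using $\lVert u_0\rVert \leq \delta_0$, $\lVert u_0\rVert_\infty \leq \rbar_\mf\delta_0$ and the analogous estimates for the increments $u_k - u_{k-1}$. Inverting the two branches of the bound so that the exponent dominates $H_0 + L_\mf D_\mf + \lambda$ (and $H_{k-1} + H_k + kD_\mf + L_\mf D_\mf + \lambda$ for $k \geq 1$) forces, schematically,
\[
\eta_0 = C\,\frac{\tau_n^2}{\privpar^2}\,\max\!\left(\delta_0\sqrt{\frac{H_0 + L_\mf D_\mf + \lambda}{n}}\,,\ \frac{\rbar_\mf\,\delta_0\,(H_0 + L_\mf D_\mf + \lambda)^2}{n}\right),
\]
and likewise for $\eta_k$. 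The decisive structural difference from the proof of Lemma~\ref{l:E:GX} is that the second, Bernstein-type branch of Proposition~\ref{prop:prob:Xi:xi} is of \emph{square-root} type in $t$, so that its inversion produces the \emph{square} of the complexity level $H_{k-1} + H_k + \cdots$; this is the origin both of the quartic exponent on $L_\mf$ and of the extra logarithmic factors in $\pen_\xi$. With these choices, summing the resulting geometric series over the nets yields $\Pb(\sup_{u \in B_\mf}\Xi_n^\xi(u) > \eta) \lesssim e^{-L_\mf D_\mf - \lambda}$ for every $\eta \geq \sum_{k \geq 0}\eta_k$, just as in Lemma~\ref{l:E:GX}.

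It then remains to estimate $(\sum_{k \geq 0}\eta_k)^2$, and this is where the factor $\delta_0^2 \asymp n^{-2}$ is what makes the argument go through: the quadratic-complexity branch would otherwise contribute $\rbar_\mf^2 D_\mf^4/n^2 \asymp C_\rbar^2 D_\mf^3/n$ (using $\rbar_\mf^2 \leq C_\rbar^2 n/D_\mf$), which is far too large, whereas the two additional powers of $\delta_0$ turn this into $C_\rbar^2 D_\mf^3/n^3 \leq C_\rbar^2 D_\mf/n$ by Assumption~\ref{ass:model:dim}; simultaneously $\log(6/\delta_0) \asymp \log n$ supplies the logarithmic contribution. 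A computation parallel to the one at the end of the proof of Lemma~\ref{l:E:GX} then gives $(\sum_{k \geq 0}\eta_k)^2 \leq \kappa_\xi\,\tau_n^4 D_\mf(L_\mf^4 + L_\mf + \log n)/(n\privpar^4)$ plus a remainder that is $\lesssim \tau_n^4 n^{-3}\privpar^{-4}$ times a polynomial in $\lambda$. Writing $\Eb[((G^\xi(\mf))^2 - \kappa_\xi\tau_n^4 D_\mf(L_\mf^4 + L_\mf + \log n)/(n\privpar^4))_+] = \int_0^\infty\Pb((G^\xi(\mf))^2 > \kappa_\xi\tau_n^4 D_\mf(L_\mf^4 + L_\mf + \log n)/(n\privpar^4) + v)\,dv$ and converting the excess $v$ into the free parameter $\lambda$ exactly as in Lemma~\ref{l:E:GX}, the $\lambda$-integral converges and produces the stated bound $\lesssim e^{-L_\mf D_\mf}\,C(C_\rbar)\tau_n^4 n^{-3}\privpar^{-4}$; note that $\lVert f\rVert_\infty$ does not appear because $I_n^\xi$ involves only the Laplace noise. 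The main obstacle is precisely the weakness of Proposition~\ref{prop:prob:Xi:xi}: it has only a square-root (Bernstein) tail in its second branch, which dominates at the small deviation levels relevant here, and taming the resulting $D_\mf^3$ blow-up is exactly what forces $\delta_0 \asymp n^{-1}$ and, as a by-product, the larger penalty; the remaining bookkeeping is routine and mirrors Lemma~\ref{l:E:GX}.
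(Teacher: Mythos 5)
Your proposal is correct and follows essentially the same route as the paper's own proof: the identical chaining over the nets $T_k$ with $\delta_0 \asymp n^{-1}$, inversion of the two branches of Proposition~\ref{prop:prob:Xi:xi} (the square-root branch producing the squared complexity and hence the $L_\mf^4$ term), summation to get $\lesssim e^{-L_\mf D_\mf - \lambda}$, and the final integration in $\lambda$. One caveat on bookkeeping: carrying the squared complexity through $\bigl(\sum_k \eta_k\bigr)^2$ actually produces a $\log^4(n)$ factor (as in the paper's own displayed computation, using $\rbar_\mf^2 \leq C_{\rbar}^2 n/D_\mf$ and $D_\mf \leq n$), so your claimed $\log(n)$ term is slightly optimistic --- though this discrepancy between $\log(n)$ and $\log^4(n)$ is already present in the paper between the lemma statement and its proof.
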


\begin{proof}
	As in the proof of Lemma~\ref{l:E:GX} we consider
\begin{align*}
\Pb(\sup_{u \in \Bc_\mf} \Xi^\xi_n(u) > \eta) &= \Pb \left[  \exists (u_k) \in \prod_{k \geq 0} T_k : \Xi_n^\xi(u_0) + \sum_{k \geq 1} \Xi_n^\xi(u_k - u_{k-1}) > \eta_0 + \sum_{k \geq 1} \eta_k \right]\\
&\leq P_1 + P_2
\end{align*}
with
\begin{align*}
P_1 &= \sum_{u_0 \in T_0} \Pb(\Xi^\xi_n(u_0) > \eta_0),\\
P_2 &= \sum_{k \geq 1} \sum_{\substack{u_{k-1} \in T_{k-1}\\u_k \in T_k}} \Pb ( \Xi_n^\xi(u_k - u_{k-1}) > \eta_k ).
\end{align*}
Now, for any $u_0 \in T_0$, we obtain from Proposition~\ref{prop:prob:Xi:xi} that
\begin{align*}
\Pb(\Xi^\xi_n(u_0) > \eta_0) &\leq 2 \exp \left( - \frac{1}{C}  \min \left( \frac{\pi^2 n\eta_0^2 \privpar^4}{64\tau_n^4 \lVert u_0 \rVert^2} , \frac{ \sqrt{2\pi n\eta_0} \privpar}{4\tau_n \lVert u_0 \rVert_\infty} \right)  \right)\\
&\leq 2 \exp \left( - \frac{1}{C}  \min \left( \frac{\pi^2 n\eta_0^2 \privpar^4}{64\tau_n^4 \delta_0^2} , \frac{\sqrt{2\pi n\eta_0} \privpar}{4\tau_n \sqrt{\rbar_\mf \delta_0}} \right)  \right)
\end{align*}
and hence
\begin{equation*}
P_1 \leq 2 \exp(H_0) \exp \left( - \frac{1}{C}  \min \left( \frac{\pi^2 n\eta_0^2 \privpar^4}{64\tau_n^4 \delta_0^2} , \frac{\sqrt{2\pi n\eta_0} \privpar}{4 \tau_n \sqrt{\rbar_{\mf}\delta_0}} \right)  \right)
\end{equation*}
We choose $\eta_0$ such that
\begin{equation*}
\min \left( \frac{\pi^2 n\eta_0^2 \privpar^4}{64\tau_n^4 \delta_0^2} , \frac{\sqrt{2\pi n\eta_0} \privpar}{4 \tau_n \sqrt{\rbar \delta_0}} \right) \geq H_0 + L_{\mf}D_{\mf} + \lambda
\end{equation*}
which in turn is satisfied whenever
\begin{equation*}
\eta_0 \geq C \cdot \frac{\tau_n^2 \delta_0}{\privpar^2} \max \left\lbrace \sqrt{\frac{H_0 + L_{\mf}D_{\mf} + \lambda}{n}}, \frac{\rbar_{\mf}}{n} (H_0 + L_{\mf}D_{\mf} + \lambda)^2 \right\rbrace 
\end{equation*}
for some sufficiently large constant $C>0$.
By Proposition~\ref{prop:prob:Xi:xi} for any choice of $u_{k-1}$ and $u_k$
\begin{align*}
\Pb ( \Xi_n^\xi(u_k - u_{k-1}) > \eta_k ) &\leq 2 \exp \left( - \frac{1}{C}  \min \left( \frac{\pi^2 n\eta_k^2 \privpar^4}{64\tau_n^4 \lVert u_k - u_{k-1} \rVert^2} , \frac{\sqrt{2\pi n\eta_k} \privpar}{4 \tau_n \lVert u_k - u_{k-1} \rVert_\infty^{1/2}} \right)  \right)\\
&\leq 2 \exp \left( - \frac{1}{C}  \min \left( \frac{\pi^2 n\eta_k^2 \privpar^4}{160\tau_n^4 \delta_{k-1}^2} , \frac{ \sqrt{2\pi n\eta_k} \privpar}{4\sqrt{3/2} \tau_n \sqrt{\rbar_{\mf} \delta_{k-1}}} \right)  \right).
\end{align*}
Thus,
\begin{equation*}
P_2 \leq 2 \sum_{k \geq 1} \exp(H_{k-1}) \exp(H_k) \exp \left( - \frac{1}{C}  \min \left( \frac{\pi^2 n\eta_k^2 \privpar^4}{160\tau_n^4 \delta_{k-1}^2} , \frac{\sqrt{2\pi n\eta_k} \privpar}{4\sqrt{3/2} \tau_n \sqrt{\rbar_{\mf} \delta_{k-1}}} \right)  \right).
\end{equation*}
Here we choose the $\eta_k$ such that
\begin{equation*}
\min \left( \frac{\pi^2 n\eta_k^2 \privpar^4}{160\tau_n^4 \delta_{k-1}^2} , \frac{\sqrt{2 \pi n\eta_k} \privpar}{4\sqrt{3/2} \tau_n \sqrt{\rbar_{\mf} \delta_{k-1}}} \right) \geq H_{k-1} + H_k + kD_{\mf} + L_{\mf} D_{\mf} + \lambda
\end{equation*}
which in turn is satisfied whenever
\begin{equation*}\scriptstyle
\eta_k \geq C \frac{\tau_n^2 \delta_{k-1}}{\privpar^2}\max \left\lbrace \sqrt{\frac{H_{k-1} + H_k + k D_{\mf} + L_{\mf} D_{\mf} + \lambda}{n}} , \frac{\rbar_{\mf} }{n} (H_{k-1} + H_k + k D_{\mf} + L_{\mf} D_{\mf} + \lambda)^2 \right\rbrace 
\end{equation*}
for some sufficiently large constant $C > 0$.
Under this choice of $(\eta_k)_{k \geq 0}$, we obtain for $\eta \geq \sum \eta_k$ (under the assumption that $D_{\mf} \geq 1$)
\begin{align*}
\Pb(\sup_{u \in \Bc_{\mf}} \Xi^\xi_n(u) > \eta) &\leq P_1 + P_2\\
&\leq 2 \exp(-L_{\mf}D_{\mf}- \lambda) + 2 \sum_k \exp(-kD_{\mf}- L_{\mf}D_{\mf} - \lambda)\\
&= 2 \exp (- L_{\mf}D_{\mf} - \lambda) \left[ 1 + \sum_k e^{-kD_{\mf}} \right]\\
&\leq 3.2 \exp (- L_{\mf}D_{\mf} - \lambda).
\end{align*}

Let us now find a bound for $\sum_{k \geq 0} \eta_k$.
We have
\begin{align*}\scriptstyle
\left( \sum_{k \geq 0} \eta_k \right)^2 &\scriptstyle \lesssim \left( \frac{ \tau_n^2 \delta_0}{\privpar^2} \left[ \sqrt{\frac{H_0 + L_{\mf}D_{\mf} + \lambda}{n}} + \frac{\rbar_{\mf}}{n} (H_0 + L_{\mf}D_{\mf} + \lambda)^2 \right]  \right. \\
&\scriptstyle \hspace{1em}+ \frac{\tau_n^2}{\privpar^2} \sum_k \delta_{k-1} \left[ \sqrt{\frac{H_{k-1} + H_k + k D_{\mf} + L_{\mf} D_{\mf} + \lambda}{n}}  + \frac{\rbar_{\mf} (H_{k-1} + H_k + k D_{\mf} + L_{\mf} D_{\mf} + \lambda)^2}{n}  \right]  \left. \right)^2\\
&\scriptstyle = \frac{\tau_n^4}{\privpar^4} \left[ \left(\delta_0 \sqrt{\frac{H_0 + L_{\mf}D_{\mf} + \lambda}{n}} + \sum_{k \geq 1} \delta_{k-1} \sqrt{\frac{H_{k-1} + H_k + k D_{\mf} + L_{\mf} D_{\mf} + \lambda}{n}}\right)   \right.\\
&\scriptstyle\hspace{1em} + \left.  \frac{\rbar_{\mf}}{n} \left( \delta_0 (H_0 + L_{\mf}D_{\mf} + \lambda)^2 + \sum_{k \geq 1} \delta_{k-1} (H_{k-1} + H_k + kD_{\mf} + L_{\mf} D_{\mf} + \lambda)^2 \right) \right]^2\\
&\scriptstyle \lesssim \frac{\tau_n^4}{\privpar^4} \left( \delta_0 \sqrt{\frac{H_0 + L_{\mf}D_{\mf} + \tau}{n}} + \sum_{k \geq 1} \delta_{k-1} \sqrt{\frac{H_{k-1} + H_k + k D_{\mf} + L_{\mf} D_{\mf} + \lambda}{n}} \right)^2\\
&\scriptstyle\hspace{1em} + \frac{\tau_n^4}{\privpar^4} \cdot \frac{\rbar_{\mf}^2}{n^2} \cdot \left( \delta_0 (H_0 + L_{\mf}D_{\mf} + \lambda)^2 + \sum_{k \geq 1} \delta_{k-1} (H_{k-1} + H_k + kD_{\mf} + L_{\mf} D_{\mf} + \lambda)^2 \right)^2\\
&\scriptstyle \lesssim \frac{\tau_n^4}{n \privpar^4} \left( \delta_0 + \sum_k \delta_{k-1} \right) \left( \delta_0 (H_0 + L_{\mf}D_{\mf} + \lambda) + \sum_{k \geq 1} \delta_{k-1} (H_{k-1} + H_k + kD_{\mf} + L_{\mf}D_{\mf} + \lambda)  \right) \\
&\scriptstyle\hspace{1em} + \frac{\tau_n^4 \rbar_{\mf}^2}{\privpar^4 n^2} \delta_0^2 \left[ H_0^2 + (L_{\mf}D_{\mf})^2 +  \lambda^2 + \sum 2^{-(k-1)} (H_{k-1}^2 + H_k^2 + k^2 D_{\mf}^2 + L_{\mf}^2 D_{\mf}^2 + \lambda^2) \right]^2\\
&\scriptstyle \lesssim \frac{\tau_n^4}{n \privpar^4} \cdot \delta_0^2 (H_0 + L_{\mf}D_{\mf} + \lambda + \sum_{k\geq 1} 2^{-(k-1)} (H_{k-1} + H_k + kD_{\mf} + L_{\mf}D_{\mf} + \lambda))\\
&\scriptstyle \hspace{1em}+ \frac{\tau_n^4 \rbar_{\mf}^2}{\privpar^4 n^2} \delta_0^2 \left[ D_{\mf}^2 \log^2(1/\delta_0) + L_{\mf}^2 D_{\mf}^2 + \lambda^2 + L_{\mf}^2 D_{\mf}^2 + \lambda^2 \right]^2 \\
&\scriptstyle \lesssim  \frac{\tau_n^4}{n \privpar^4} \cdot \delta_0^2 (H_0 + L_{\mf}D_{\mf} + \lambda +  D_{\mf} \log(1/\delta_0) +  L_{\mf}D_{\mf} + \lambda)\\
&\scriptstyle \hspace{1em}+ \frac{\tau_n^4 \rbar_{\mf}^2}{n^2 \privpar^4} \delta_0^2 \left[ L_{\mf}^2 D_{\mf}^2 + D_{\mf}^2 \log^2(1/\delta_0) + \lambda^2 \right]^2\\
&\scriptstyle \lesssim \frac{\tau_n^4}{n\privpar^4} \cdot \delta_0^2 \left[ L_{\mf} D_{\mf} + D_\mf \log(1/\delta_0)  + \lambda \right] \\
&\scriptstyle \hspace{1em}+  \frac{\tau_n^4 C_{\rbar}^2}{n \privpar^4} \delta_0^2 L_\mf^4 D_\mf^3 + \frac{\tau_n^4 C_{\rbar}^2}{n \privpar^4} \delta_0^2  \log^4(1/\delta_0) D_\mf^3 + \frac{\lambda^4 \tau_n^4 \rbar_\mf^2}{\privpar^4 n^2}\delta_0^2.
\end{align*}
Now, taking $\delta_0 = c/n$ for some numerical constant $0 < c < 1/5$, we obtain (note that we assume $D_\mf \leq n$ for all $\mf \in \Mc_n$)
\begin{align*}
	\left( \sum_{k \geq 1} \eta_k \right)^2 &\leq \kappa_\xi \left\lbrace  \frac{\tau_n^4}{n\privpar^4} (L_{\mf} + L_{\mf}^4 + \log^4(n) ) D_{\mf} +\frac{\tau_n^4}{n^3 \privpar^4} \left[ \lambda \vee \frac{\lambda^4 \rbar_{\mf}^2}{n} \right] \right\rbrace
\end{align*}
for a sufficiently large constant $\kappa_\xi=\kappa_\xi(C_{\rbar})$.
Finally,
\begin{align*}
\Eb &\left[ \left( (G^\xi(\mf))^2 - \kappa_\xi \frac{\tau_n^4D_{\mf} (L_{\mf}^4 + L_{\mf} +  \log^4(n) ) }{n\privpar^4}  \right)_+ \right]\\
&\leq \int_0^\infty \Pb \left( (G^\xi(\mf))^2 > \kappa_\xi \frac{\tau_n^4D_{\mf} (L_{\mf}^4 + L_{\mf} +  \log^4(n) ) }{n\privpar^4}  + u \right) \dd u\\
&\leq e^{-L_{\mf}D_{\mf}} \left( \int_{(n/\rbar_{\mf}^2)^{1/3}}^\infty e^{-n\privpar/(\tau_n \sqrt{\rbar_{\mf}}) \cdot (u/(2\kappa_\xi))^{1/4}} \dd u + \int_0^{(n/\rbar_{\mf}^2)^{1/3}} e^{-un^3\privpar^4/(2\kappa_\xi \tau_n^4)} \dd u \right)\\
&\leq e^{-L_{\mf}D_{\mf}} \cdot \left( \frac{2\kappa_\xi \tau_n^4\rbar_{\mf}^2}{n^4 \privpar^4} +  \frac{2\kappa_\xi \tau_n^4}{n^3\privpar^4} \right)\\
&\lesssim e^{-L_{\mf}D_{\mf}} \frac{\tau_n^4}{n^3 \privpar^4} 
\end{align*}
which finishes the proof.
\end{proof}

\subsection{Mixed terms}\label{subs:mixed}

\begin{proposition}\label{prop:prob:Xitilde}
	For any symmetric function $u$,
	\begin{equation*}
	\Pb \left(  \Xitilde_n(u) \geq t \right) \leq 2 \exp \left( - \frac{1}{C} \min \left( \frac{t^2 n}{2M^4 \lVert u \rVert^2 \cdot \lVert f\rVert_\infty} , \left( \frac{nt}{M^2 \lVert u\rVert_\infty \cdot \lVert f \rVert_\infty^{1/2}} \right)^{1/2}  \right) \right) 
	\end{equation*}
	where $M=3+4\tau_n/\privpar$.
\end{proposition}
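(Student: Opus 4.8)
The plan is to recognise $\Xitilde_n(u)$ as a centred quadratic form in a single vector with independent, sub-exponential coordinates and then apply Proposition~\ref{prop:goetze} with $\beta=1$, in the same spirit as the proofs of Propositions~\ref{prop:prob:Xi:X} and \ref{prop:prob:Xi:xi}. Writing $X=(X_1,\dots,X_n)^\top$, $\xi=(\xi_1,\dots,\xi_n)^\top$, and $P_H$ for the orthogonal projection onto the hyperplane orthogonal to $\vec 1$ in $\R^n$, the two `mixed' summands of $\Itilde_n$ are transposes of one another, and since $T_n(u)$ is real symmetric for symmetric $u$ one gets
\[
	\Xitilde_n(u) = \langle \Itilde_n, u \rangle = \frac{1}{\pi n}\,(P_H X)^\top T_n(u)\,(P_H \xi).
\]
As $X$ is Gaussian, $P_H X = P_H(X-\mu\vec 1) = P_H \Sigma_X^{1/2} Y$ with $Y\sim\Nc(\vec 0, E_n)$ (which also disposes of the unknown mean $\mu$). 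Stacking $W = (Y^\top,\xi^\top)^\top\in\R^{2n}$ — whose $2n$ coordinates are mutually independent, the $Y_i$ being i.i.d.\ $\Nc(0,1)$, the $\xi_i$ i.i.d.\ $\Laplace(2\tau_n/\privpar)$, and $Y$ independent of $\xi$ — one can write $\Xitilde_n(u) = W^\top \Ab W$ with the symmetric, block-antidiagonal matrix
\[
	\Ab = \frac{1}{2\pi n}\begin{pmatrix} \boldsymbol 0_n & \Sigma_X^{1/2} P_H T_n(u) P_H \\ P_H T_n(u) P_H \Sigma_X^{1/2} & \boldsymbol 0_n \end{pmatrix}.
\]
Since the diagonal blocks of $\Ab$ vanish and $Y,\xi$ are independent and centred, $\Eb\,W^\top \Ab W = 0$, so the centring in Proposition~\ref{prop:goetze} is automatic.

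Next I would check the hypotheses of Proposition~\ref{prop:goetze}. The coordinates of $W$ are independent and centred; for the common sub-exponential bound one may take $M = 3 + 4\tau_n/\privpar$, using $\lVert Y_i\rVert_{\psi_1}\le 3$ (an elementary estimate of $\Eb\exp(|Y_i|/3)$, or $\lVert Y_i\rVert_{\psi_1}\lesssim\lVert Y_i\rVert_{\psi_2}\le\sqrt{8/3}$) and $\lVert\xi_i\rVert_{\psi_1}\le 4\tau_n/\privpar$ as noted in the proof of Lemma~\ref{l:E:Gxi}; the second moments $\Eb Y_i^2 = 1$ and $\Eb\xi_i^2 = 8\tau_n^2/\privpar^2$ are both dominated by $M^2$ and are thus absorbed into the $M^2, M^4$ in the bound. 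It remains to control $\lVert\Ab\rVert_{\mathrm{HS}}$ and $\lVert\Ab\rVert_{\mathrm{op}}$. With $B = \Sigma_X^{1/2} P_H T_n(u) P_H$, the block structure gives $\lVert\Ab\rVert_{\mathrm{HS}}^2 = \frac{2}{(2\pi n)^2}\lVert B\rVert_{\mathrm{HS}}^2$ and $\lVert\Ab\rVert_{\mathrm{op}} = \frac{1}{2\pi n}\lVert B\rVert_{\mathrm{op}}$. For the operator norm, $\lVert B\rVert_{\mathrm{op}}\le\rho(\Sigma_X)^{1/2}\,\rho(T_n(u))\le\lVert f\rVert_\infty^{1/2}\lVert u\rVert_\infty$, using $\rho(\Sigma_X)\le\lVert f\rVert_\infty$ and $\rho(T_n(u))\le\lVert u\rVert_\infty$ as in the earlier proofs. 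For the Hilbert--Schmidt norm — the point at which the mixed term is genuinely better behaved than the purely Gaussian one — only a single factor of $\Sigma_X$ enters:
\[
	\lVert B\rVert_{\mathrm{HS}}^2 = \trace\bigl(\Sigma_X\,(P_H T_n(u) P_H)^2\bigr) \le \rho(\Sigma_X)\,\trace\bigl((P_H T_n(u) P_H)^2\bigr) \le \lVert f\rVert_\infty\,\trace(T_n(u)^2) \le n\,\lVert f\rVert_\infty\,\lVert u\rVert^2,
\]
where I used $P_H^\top = P_H = P_H^2$, cyclicity of the trace, $\trace(CD)\le\rho(C)\trace(D)$ for positive semidefinite $C,D$, $\trace((P_H M P_H)^2)\le\trace(M^2)$ for a projection $P_H$, and $\trace(T_n(u)^2)\le n\lVert u\rVert^2$ from p.~284 of \cite{comte2001adaptive}. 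A cruder bound of the form $\lVert B\rVert_{\mathrm{HS}}^2\le\rho(\Sigma_X)^2\trace(T_n(u)^2)$ would cost an extra factor $\lVert f\rVert_\infty$ and miss the stated exponent.

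Finally, substituting $\lVert\Ab\rVert_{\mathrm{HS}}^2\lesssim\lVert f\rVert_\infty\lVert u\rVert^2/n$ and $\lVert\Ab\rVert_{\mathrm{op}}\lesssim\lVert f\rVert_\infty^{1/2}\lVert u\rVert_\infty/n$ into Proposition~\ref{prop:goetze} with $\beta=1$ yields, after absorbing the numerical prefactors into $C$,
\[
	\Pb\bigl(\Xitilde_n(u)\ge t\bigr)\le 2\exp\left(-\frac{1}{C}\min\left(\frac{t^2 n}{2M^4\lVert u\rVert^2\cdot\lVert f\rVert_\infty},\;\left(\frac{nt}{M^2\lVert u\rVert_\infty\cdot\lVert f\rVert_\infty^{1/2}}\right)^{1/2}\right)\right),
\]
which is the assertion. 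I expect the main obstacle to be essentially bookkeeping rather than a new idea: correctly handling the $2n\times 2n$ block-antidiagonal matrix (so that its operator norm is $\lVert B\rVert_{\mathrm{op}}$ and the centring is trivial), and — more delicately — arranging the trace estimate so that exactly one power of $\lVert f\rVert_\infty$ survives. One also has to make sure that the way Proposition~\ref{prop:goetze} combines the per-coordinate variances and the $\psi_1$-parameter $M$ does not reintroduce the mismatch between the $O(1)$ scale of the Gaussian block and the $O(\tau_n/\privpar)$ scale of the Laplace block; this is precisely why it is convenient to bound everything by the single quantity $M = 3 + 4\tau_n/\privpar$.
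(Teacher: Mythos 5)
Your proof is correct and follows essentially the same route as the paper: write the mixed term as a quadratic form in the stacked vector $(Y^\top,\xi^\top)^\top$ with a symmetric block-antidiagonal matrix, apply Proposition~\ref{prop:goetze} with $\beta=1$ and $M=3+4\tau_n/\privpar$, and bound $\lVert\cdot\rVert_{\mathrm{HS}}$ and $\lVert\cdot\rVert_{\mathrm{op}}$ so that only one power of $\Sigma_X$ (hence one factor $\lVert f\rVert_\infty$, resp.\ $\lVert f\rVert_\infty^{1/2}$) enters. Your version is in fact slightly more careful than the paper's, which drops the centering projections $P_H$ in this step; since $\lVert P_H\rVert_{\mathrm{op}}\leq 1$ this changes nothing in the bounds.
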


\begin{proof}
In order to deal with the mixed term, we first write
\begin{equation*}
\begin{pmatrix}
X\\
\xi
\end{pmatrix} = 
\begin{pmatrix}
\sqrt{\Sigma_\Xb} & \boldsymbol 0_{n}\\
\boldsymbol 0_{n} & \Eb_{n} 
\end{pmatrix}
\begin{pmatrix}
Y\\
\xi
\end{pmatrix}
\end{equation*}
where $Y = (Y_1,\ldots,Y_n)^\top$ is a vector of i.i.d. standard Gaussian random variables.
Then, the term of interest can be written as
\begin{align*}
\begin{pmatrix}
X^\top & \xi^\top
\end{pmatrix}
\begin{pmatrix}
\boldsymbol 0_{n} & T_n(u)\\
T_n(u) & \boldsymbol 0_{n}
\end{pmatrix}
\begin{pmatrix}
X\\
\xi
\end{pmatrix} &= \begin{pmatrix}
Y^\top & \xi^\top
\end{pmatrix}
\begin{pmatrix}
\sqrt{\Sigma_\Xb} & \boldsymbol 0_{n}\\
\boldsymbol 0_{n} & \Eb_{n} 
\end{pmatrix}
\begin{pmatrix}
\boldsymbol 0_{n} & T_n(u)\\
T_n(u) & \boldsymbol 0_{n}
\end{pmatrix}
\begin{pmatrix}
\sqrt{\Sigma_\Xb} & \boldsymbol 0_{n}\\
\boldsymbol 0_{n} & \Eb_{n} 
\end{pmatrix}
\begin{pmatrix}
Y\\
\xi
\end{pmatrix}\\
&= \begin{pmatrix}
Y^\top & \xi^\top
\end{pmatrix} 
\begin{pmatrix}
\boldsymbol 0_{n} & \sqrt{\Sigma_\Xb} T_n(u) \\
T_n(u) \sqrt{\Sigma_\Xb} & \boldsymbol 0_{n}
\end{pmatrix}
\begin{pmatrix}
Y \\
\xi
\end{pmatrix} \\
&\eqdef \begin{pmatrix}
Y^\top & \xi
\end{pmatrix} A \begin{pmatrix}
Y\\
\xi
\end{pmatrix}.
\end{align*}
Since all components of the vector $(Y^\top \, \xi^\top)$ are independent, and the matrix $A$ is symmetric, we can apply  Proposition~\ref{prop:goetze} again with $\beta = 1$ as in the proof of Proposition~\ref{prop:prob:Xi:xi}.
We have $\Eb Y_i^2 = 1$, $\Eb \xi_i^2 = 8\tau_n^2/\privpar^2$.
As seen above $\lVert \xi_i \rVert_{\psi_1} \leq 4\tau_n/\privpar$ and moreover $\lVert Y_i \rVert_{\psi_1} \leq \lVert 1 \rVert_{\psi_2} \cdot \lVert Y_i \rVert_{\psi_2} \leq (\log 2)^{-1/2} \cdot \sqrt{3} \leq 3$.
Hence, we can take $M = 3 + 4\tau_n/\privpar$.
Application of Proposition~\cite{goetze2019concentration} yields
\begin{equation*}
\Pb \left(  \Xitilde_n(u) \geq t \right) \leq 2 \exp \left( - \frac{1}{C} \min \left( \frac{4\pi^2 t^2 n^2}{M^4 \lVert A \rVert_{\text{HS}}^2} , \left( \frac{2 \pi nt}{M^2 \lVert A \rVert_{\text{op}}} \right)^{1/2}  \right) \right),
\end{equation*}
and we have to find appropriate bounds for the quantities $\lVert A \rVert_{\mathrm{HS}}$ and $\lVert A \rVert_{\mathrm{op}}$.
Now, using the estimate H.1.g in Section II.9 from\cite{marshall2011inequalities}, p.~341, we have
\begin{align*}
\lVert A \rVert_{\text{HS}}^2 &= \trace(A^\top A) = \trace \begin{pmatrix}
\sqrt{\Sigma_\Xb} T_n(u)^2 \sqrt{\Sigma_\Xb} & \boldsymbol 0_{n}\\
\boldsymbol 0_{n} & T_n(u) \Sigma_\Xb T_n(u)
\end{pmatrix}\\
&= \trace (\sqrt{\Sigma_\Xb} T_n(u)^2 \sqrt{\Sigma_\Xb}) + \trace (T_n(u) \Sigma_\Xb T_n(u))\\
&= 2 \trace(\Sigma_X T_n(u)^2)\\
&\leq 2 n \lVert u \rVert^2 \cdot \lVert f \rVert_\infty.
\end{align*}
Finally, in order to bound $\lVert A \rVert_{\text{op}}$, note that
\begin{align*}
\lVert A \rVert_{\text{op}} &\leq \lVert T_n(u)\rVert_{\text{op}} \cdot \lVert \sqrt{\Sigma_X} \rVert_{\text{op}}\\
&\leq \lVert u\rVert_\infty \cdot \lVert f \rVert_\infty^{1/2}.
\end{align*}
\end{proof}

\begin{lemma}\label{l:E:Gtilde}
	For any fixed model $\mf \in \Mc_n$ and a sufficiently large constant $\kappatilde > 0$, we have
	\begin{align*}
	\Eb &\left[ \left( (\Gtilde(\mf))^2 - \kappatilde M^4 (1+\lVert f \rVert_\infty)^2 (L_{\mf}^4 + L_\mf + \log(n) ) \frac{D_{\mf}}{n} \right)_+ \right]\\
	&\hspace*{12em} \lesssim e^{-L_{\mf}D_{\mf}} \frac{C(C_{\rbar}, \lVert f \rVert_\infty) M^4}{n^3}
	\end{align*}
	where $M = 3+ 4\tau_n/\privpar$.
\end{lemma}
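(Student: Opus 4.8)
It will be convenient to abbreviate $q_\mf := \kappatilde M^4 (1+\lVert f\rVert_\infty)^2 (L_\mf^4 + L_\mf + \log(n))\,D_\mf/n$ for the quantity subtracted in the statement, where $M = 3 + 4\tau_n/\privpar$. The plan is to reproduce, essentially line by line, the chaining argument of the proofs of Lemmas~\ref{l:E:GX} and \ref{l:E:Gxi}, now feeding in the tail estimate of Proposition~\ref{prop:prob:Xitilde} in place of Propositions~\ref{prop:prob:Xi:X} and \ref{prop:prob:Xi:xi}. First I would fix $\mf \in \Mc_n$, recall that $\Gtilde(\mf) = \sup_{u \in \Bc_\mf}\Xitilde_n(u)$, and apply the chaining decomposition \eqref{eq:dec:u} along the $\delta_k$-nets $(T_k)_{k \ge 0}$ to obtain, for positive numbers $(\eta_k)_{k \ge 0}$ and $\eta \ge \sum_{k \ge 0}\eta_k$,
\[
\Pb\Big(\sup_{u \in \Bc_\mf}\Xitilde_n(u) > \eta\Big) \le P_1 + P_2, \qquad P_1 = \sum_{u_0 \in T_0}\Pb\big(\Xitilde_n(u_0) > \eta_0\big),
\]
with $P_2 = \sum_{k \ge 1}\sum_{u_{k-1} \in T_{k-1},\, u_k \in T_k}\Pb\big(\Xitilde_n(u_k - u_{k-1}) > \eta_k\big)$, exactly as in those two lemmas.

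The tail bound in Proposition~\ref{prop:prob:Xitilde} has the same shape $2\exp\big(-\tfrac{1}{C}\min(\text{quadratic in }t,\ \text{square-root in }t)\big)$ as the one in Proposition~\ref{prop:prob:Xi:xi}, with the role of $\tau_n^2/\privpar^2$ now played by $M^2\lVert f\rVert_\infty^{1/2}$ (and an additional factor of $\lVert f\rVert_\infty$ in the Hilbert--Schmidt term). Consequently the choice of the sequence $(\eta_k)_{k \ge 0}$ is the one from the proof of Lemma~\ref{l:E:Gxi}: at level $k \ge 1$ one takes
\[
\eta_k \asymp M^2(1+\lVert f\rVert_\infty)\,\delta_{k-1}\,\max\Big\{\sqrt{\tfrac{H_{k-1}+H_k+kD_\mf+L_\mf D_\mf+\lambda}{n}},\ \tfrac{\rbar_\mf}{n}\big(H_{k-1}+H_k+kD_\mf+L_\mf D_\mf+\lambda\big)^2\Big\}
\]
(and the analogous expression with a single $H_0$ and without the $k$-term for $\eta_0$), with a sufficiently large numerical constant. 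With these choices the exponent at level $k$ dominates $H_{k-1}+H_k+kD_\mf+L_\mf D_\mf+\lambda$, so $P_1 + P_2 \le 3.2\,e^{-L_\mf D_\mf - \lambda}$ for every $\eta \ge \sum_{k\ge 0}\eta_k$, just as in the two preceding lemmas.

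Next I would take $\delta_0 = c/n$ for a small numerical constant $c \in (0, 1/5)$ and bound $\big(\sum_{k\ge 0}\eta_k\big)^2$ by the same telescoping and Cauchy--Schwarz computation as in the proof of Lemma~\ref{l:E:Gxi}, using $H_k \le D_\mf[\log(6/\delta_0)+k\log 2]$, $\sum_k \delta_{k-1} < \infty$, $\rbar_\mf^2 \le C_{\rbar}^2 n/D_\mf$, and $D_\mf \le n$. The factor $\delta_0^2 \asymp n^{-2}$ removes two powers of $n$ throughout, so the $\lambda$-free part collapses to $q_\mf$ (the $\log(n)$ comes from $\log(1/\delta_0)$, the $L_\mf^4$ from squaring the quartic $\rbar_\mf$-branch, and the $C_{\rbar}$ is absorbed into $\kappatilde$), while the $\lambda$-dependent remainder has order $\tfrac{M^4(1+\lVert f\rVert_\infty)^2}{n^3}\big[\lambda \vee \rbar_\mf^2\lambda^4/n\big]$. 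Hence $\Pb\big((\Gtilde(\mf))^2 > q_\mf + v\big)$ decays at the stated rate, and writing
\[
\Eb\Big[\big((\Gtilde(\mf))^2 - q_\mf\big)_+\Big] = \int_0^\infty \Pb\big((\Gtilde(\mf))^2 > q_\mf + v\big)\,\dd v
\]
and splitting the integral at the crossover $v \asymp \tfrac{M^4(1+\lVert f\rVert_\infty)^2}{n^3}(n/\rbar_\mf^2)^{1/3}$ between the two tail regimes — using an $e^{-c v^{1/4} n/(\ldots)}$-type decay on the upper piece and an $e^{-c v n^3/(\ldots)}$-type decay on the lower one, exactly as in the concluding display of the proof of Lemma~\ref{l:E:Gxi} — both pieces are $\lesssim e^{-L_\mf D_\mf}\,C(C_{\rbar},\lVert f\rVert_\infty)\,M^4/n^3$, which is the claim.

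The only delicate bookkeeping is the same one as in Lemma~\ref{l:E:Gxi}: the square-root branch of the tail forces the quartic term $\rbar_\mf(\cdots)^2/n$ in $\eta_k$, so squaring $\sum_k \eta_k$ generates an $L_\mf^4 D_\mf^3$-type contribution, and it is precisely the choice $\delta_0 \asymp n^{-1}$ together with $\rbar_\mf^2 \le C_{\rbar}^2 n/D_\mf$ that brings this back to a single power of $D_\mf$ inside the penalty $q_\mf$ and to an $n^{-3}$-order remainder after integration; no new ideas beyond the two previous lemmas are needed.
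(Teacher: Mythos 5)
Your proposal follows essentially the same route as the paper: the paper's proof of Lemma~\ref{l:E:Gtilde} likewise just reruns the chaining argument of Lemma~\ref{l:E:Gxi} with Proposition~\ref{prop:prob:Xitilde} supplying the tails, the same choice of $\eta_k$ with $M^2(1+\lVert f\rVert_\infty)$ replacing $\tau_n^2/\privpar^2$, $\delta_0 = c/n$, and the same integration of the tail at the end. The bookkeeping you describe (quartic branch producing $L_\mf^4 D_\mf^3$, tamed by $\delta_0 \asymp n^{-1}$ and $\rbar_\mf^2 \leq C_{\rbar}^2 n/D_\mf$) matches the paper's computation, so the proposal is correct.
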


\begin{proof}
	We define $P_1$ and $P_2$ in analogy to the definition in the proof of Lemma~\ref{l:E:Gxi}, and using Proposition~\ref{prop:prob:Xitilde} we obtain
	\begin{align*}
	\Pb(\Xitilde_n(u_0) > \eta_0) &\leq 2 \exp \left( - \frac{1}{C}  \min \left( \frac{2\pi^2 n\eta_0^2}{\lVert f \rVert_\infty M^4 \lVert u_0 \rVert^2} , \frac{ \sqrt{2\pi n\eta_0}}{M \lVert f \rVert_\infty^{1/2}  \lVert u_0 \rVert_\infty} \right)  \right)\\
	&\leq 2 \exp \left( - \frac{1}{C}  \min \left( \frac{2\pi^2 n\eta_0^2}{\lVert f \rVert_\infty M^4 \delta_0^2} , \frac{\sqrt{2\pi n\eta_0}}{M \lVert f \rVert_\infty^{1/2} \sqrt{ \rbar_\mf \delta_0}} \right)  \right),
	\end{align*}
	and hence
	\begin{equation*}
	P_1 \leq 2 \exp(H_0) \exp \left( - \frac{1}{C}  \min \left( \frac{2 \pi^2 n\eta_0^2}{\lVert f \rVert_\infty M^4 \delta_0^2} , \frac{2 \pi \sqrt{n\eta_0}}{M \lVert f \rVert_\infty^{1/2} \sqrt{\rbar_\mf \delta_0}} \right) \right).
	\end{equation*}
	We choose $\eta_0$ such that
	\begin{equation*}
	 \frac{1}{C}  \min \left( \frac{n\eta_0^2}{2 \lVert f \rVert_\infty M^4 \delta_0^2} , \frac{\sqrt{n\eta_0}}{M \lVert f \rVert_\infty^{1/2} \sqrt{\rbar_\mf \delta_0}} \right) \geq H_0 + L_{\mf}D_{\mf} + \delta
	\end{equation*}
	which in turn is satisfied whenever
	\begin{equation*}
	\eta_0 \geq C M^2 \delta_0 (1 + \lVert f \rVert_\infty) \max \left\lbrace \sqrt{\frac{H_0 + L_{\mf}D_{\mf} + \lambda}{n}}, \frac{\rbar_{\mf} (H_0 + L_{\mf}D_{\mf} + \lambda)^2}{n} \right\rbrace
	\end{equation*}
	for some sufficiently large constant $C > 0$.
	From Proposition~\ref{prop:prob:Xitilde} we obtain for any choice of $u_{k-1}$ and $u_k$ that
	\begin{align*}
	\Pb ( \Xitilde_n (u_k - u_{k-1}) > \eta_k ) &\leq 2 \exp \left( - \frac{1}{C}  \min \left( \frac{2 \pi^2n\eta_k^2}{\lVert f \rVert_\infty M^4  \lVert u_k - u_{k-1} \rVert^2} , \frac{\sqrt{2\pi n\eta_k} }{M \lVert f \rVert_\infty^{1/2} \lVert u_k - u_{k-1} \rVert_\infty^{1/2}} \right)  \right)\\
	&\leq 2 \exp \left( - \frac{1}{C}  \min \left( \frac{4\pi^2 n\eta_k^2}{5 \lVert f \rVert_\infty M^4 \delta_{k-1}^2} , \frac{2\sqrt{\pi n\eta_k} }{M \lVert f \rVert_\infty^{1/2} \sqrt{3\rbar_{\mf} \delta_{k-1}}} \right)  \right).
	\end{align*}
	As a consequence,
	\begin{equation*}
	P_2 \leq 2 \sum_{k \geq 1} \exp(H_{k-1}) \exp(H_k) \exp \left( - \frac{1}{C}  \min \left( \frac{n\eta_k^2}{5 \lVert f \rVert_\infty M^4 \delta_{k-1}^2} , \frac{2\sqrt{\pi n\eta_k} }{M \lVert f \rVert_\infty^{1/2} \sqrt{3\rbar_{\mf} \delta_{k-1}}} \right) \right).
	\end{equation*}
	Here we choose the $\eta_k$ such that
	\begin{equation*}
	\frac{1}{C}\min \left( \frac{4 \pi^2 n\eta_k^2}{5 \lVert f \rVert_\infty M^4 \delta_{k-1}^2} , \frac{2 \sqrt{\pi n\eta_k} }{M \lVert f \rVert_\infty^{1/2} \sqrt{3\rbar_{\mf} \delta_{k-1}}} \right) \geq H_{k-1} + H_k + kD_{\mf} + L_{\mf} D_{\mf} + \lambda,
	\end{equation*}
	which in turn is satisfied whenever
	\begin{equation*}\scriptstyle
	\eta_k \geq C M^2 \delta_{k-1} (1+ \lVert f \rVert_\infty) \max \left\lbrace \sqrt{\frac{H_{k-1} + H_k + kD_{\mf} + L_{\mf} D_{\mf} + \lambda}{n}}, \frac{\rbar_{\mf}(H_{k-1} + H_k + kD_{\mf} + L_{\mf} D_{\mf} + \lambda)^2}{n} \right\rbrace.
	\end{equation*}
	Apart from the dependence of the leading numerical constant on $\lVert f\rVert_\infty$ and the different dependence in terms of $\privpar$ (which is hidden in the quantity $M$), the obtained expressions for $\eta_k$, $k\geq 0$ are the same as in the proof of Lemma~\ref{l:E:Gxi}.
	Taking $\delta_0 = c/n$ for some numerical constant $0 < c < 1/5$ again, we obtain
	\begin{align*}
		\left( \sum_{k \geq 1} \eta_k \right)^2 &\leq \kappatilde M^4 (1+ \lVert f \rVert_\infty)^2  \left\lbrace \frac{D_{\mf}}{n} ( L_{\mf} + L_{\mf}^4 + \log(n) )  +  + \frac{1}{n^3} \left[ \lambda \vee \frac{\lambda^4 \rbar_{\mf}^2}{n} \right] \right\rbrace.
	\end{align*}
	A calculation similar to the one in the proof of Lemma~\ref{l:E:Gxi} yields
	\begin{align*}
		\Eb &\left[ \left( (\Gtilde(\mf))^2 - \kappatilde M^4 (1+\lVert f \rVert_\infty)^2 (L_\mf + L_{\mf}^4 + \log(n) ) \frac{D_{\mf}}{n}  \right)_+ \right]\\
&\hspace{20em}\leq e^{-L_{\mf}D_{\mf}} \frac{C(C_{\rbar}, \lVert f \rVert_\infty) M^4}{n^3}.
	\end{align*}
\end{proof} 
\section{Auxiliary results}\label{app:aux}

\begin{proposition}[Hanson-Wright inequality, \cite{rudelson2013hanson}, Theorem~1.1]\label{HWI}
	Let $X=(X_1,\ldots,X_n) \in \R^n$ be a random vector with independent components $X_i$ which satisfy $\Eb X_i = 0$ and $\lVert X_i \rVert_{\psi_2} \leq K$.
	Let $A$ be an $n \times n$-matrix.
	Then, for every $t \geq 0$,
	\begin{equation*}
	\Pb \left( \lvert X^\top A X - \Eb X^\top A X \rvert > t \right) \leq 2 \exp \left[ - c \min \left( \frac{t^2}{K^4 \lVert A \rVert^2_{\mathrm{HS}}}, \frac{t}{K^2 \lVert A \rVert_{\mathrm{op}}} \right) \right].
	\end{equation*}
\end{proposition}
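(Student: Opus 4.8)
The plan is to follow the classical moment-generating-function route to a Hanson--Wright-type bound, separating the quadratic form into its diagonal and off-diagonal contributions. Write $X^\top A X - \Eb X^\top A X = S_{\mathrm d} + S_{\mathrm o}$, where $S_{\mathrm d} = \sum_i A_{ii}(X_i^2 - \Eb X_i^2)$ and $S_{\mathrm o} = \sum_{i \neq j} A_{ij} X_i X_j$, bound each part on its own, and finally recombine them via a union bound, absorbing numerical factors into $c$ and using $\sum_i A_{ii}^2 \leq \lVert A \rVert_{\mathrm{HS}}^2$ together with $\max_i \lvert A_{ii} \rvert \leq \lVert A \rVert_{\mathrm{op}}$.

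For the diagonal term I would use that sub-Gaussianity of $X_i$ with parameter $K$ forces $X_i^2 - \Eb X_i^2$ to be centred and sub-exponential with $\lVert X_i^2 - \Eb X_i^2 \rVert_{\psi_1} \lesssim K^2$. As these variables are independent, Bernstein's inequality for sums of independent sub-exponential random variables gives $\Pb(\lvert S_{\mathrm d} \rvert > t) \leq 2 \exp\bigl(- c \min\bigl(t^2/(K^4 \sum_i A_{ii}^2),\, t/(K^2 \max_i \lvert A_{ii} \rvert)\bigr)\bigr)$, which is already of the claimed two-regime shape.

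The off-diagonal term is where the work lies. First I would decouple: by the standard decoupling inequality for quadratic forms in independent variables, $\Eb \exp(\lambda S_{\mathrm o}) \leq \Eb \exp\bigl(4\lambda \sum_{i,j} A_{ij} X_i X_j'\bigr)$ for an independent copy $X'$ of $X$ (one may as well assume $A$ has zero diagonal, so no spurious terms appear). Conditioning on $X$, the inner expression $\sum_j (A^\top X)_j X_j'$ is a linear form in the independent sub-Gaussian entries $X_j'$, hence conditionally sub-Gaussian with variance proxy $\lesssim K^2 \lVert A^\top X \rVert_2^2$, so $\Eb_{X'} \exp\bigl(4\lambda \sum_{i,j} A_{ij} X_i X_j'\bigr) \leq \exp\bigl(C \lambda^2 K^2 \lVert A^\top X \rVert_2^2\bigr)$. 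It then remains to bound $\Eb_X \exp(s \lVert A^\top X \rVert_2^2)$ with $s = C \lambda^2 K^2$; comparing the sub-Gaussian vector $X$ with a standard Gaussian vector $g$ reduces this to the Gaussian chaos $\Eb \exp(s' \lVert A^\top g \rVert_2^2) = \prod_k (1 - 2 s' \sigma_k^2)^{-1/2} \leq \exp(C s' \lVert A \rVert_{\mathrm{HS}}^2)$, valid for $s' \leq c / \lVert A \rVert_{\mathrm{op}}^2$, where the $\sigma_k$ are the singular values of $A$. Chaining the estimates yields $\Eb \exp(\lambda S_{\mathrm o}) \leq \exp\bigl(C \lambda^2 K^4 \lVert A \rVert_{\mathrm{HS}}^2\bigr)$ for all $\lvert \lambda \rvert \leq c/(K^2 \lVert A \rVert_{\mathrm{op}})$, and the exponential Markov inequality followed by optimisation over $\lambda$ in this interval produces $\Pb(\lvert S_{\mathrm o} \rvert > t) \leq 2 \exp\bigl(- c \min\bigl(t^2/(K^4 \lVert A \rVert_{\mathrm{HS}}^2),\, t/(K^2 \lVert A \rVert_{\mathrm{op}})\bigr)\bigr)$.

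The \emph{main obstacle} is this off-diagonal estimate, and within it the control of the moment generating function of the chaos $\lVert A^\top X \rVert_2^2$: decoupling and the conditional sub-Gaussian step are routine, but passing from the sub-Gaussian vector $X$ to a genuine Gaussian — so that the eigenvalue product formula for $\prod_k (1 - 2 s' \sigma_k^2)^{-1/2}$ becomes available — needs care, and it is precisely this step that forces the restriction $\lvert \lambda \rvert \lesssim 1/(K^2 \lVert A \rVert_{\mathrm{op}})$ and hence the emergence of the linear-in-$t$ tail regime. Recombining the diagonal and off-diagonal bounds by a union bound, after adjusting the constant $c$, then completes the proof.
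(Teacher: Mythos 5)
This proposition is not proved in the paper at all: it is imported verbatim from \cite{rudelson2013hanson} (Theorem~1.1) as an auxiliary tool in the appendix, so there is no in-paper argument to compare against. Your sketch is a correct outline of exactly the argument in that reference --- diagonal part by Bernstein's inequality for sub-exponential variables, off-diagonal part by decoupling, a conditional sub-Gaussian bound, and reduction to a Gaussian chaos whose moment generating function is controlled through the singular values for $\lvert \lambda \rvert \lesssim 1/(K^2 \lVert A \rVert_{\mathrm{op}})$ --- and you correctly flag the only delicate step, namely replacing the sub-Gaussian vector by a Gaussian one (handled in the original proof via the identity $\exp\bigl(s \lVert A^\top X \rVert_2^2\bigr) = \Eb_g \exp\bigl(\sqrt{2s}\,\langle A^\top X, g\rangle\bigr)$ and Fubini), so the proposal is sound.
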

The following result generalizes Proposition~\ref{HWI} because it can also deal with other exponential Orlicz norms than $\lVert \cdot \rVert_{\psi_2}$.
This permits to apply the result to subexponential random variables as the Laplace noise used for our anonymization algorithm.
\begin{proposition}[\cite{goetze2019concentration}, Proposition~1.1]\label{prop:goetze}
	Let $X_1,\ldots,X_n$ be independent random variables satisfying $\Eb X_i = 0$, $\Eb X_i^2 = \sigma_i^2$, $\lVert X_i \rVert_{\psi_\beta} \leq M$ for some $\beta \in (0,1] \cup \{2\}$, and $A$ be a symmetric $n \times n$ matrix.
	For any $t > 0$ we have
	\begin{equation*}\Pb \left( \lvert \sum_ {i,j} a_{ij}X_i X_j - \sum_{i=1}^{n} \sigma_i^2 a_{ii} \rvert \geq t \right) \leq 2 \exp \left( - \frac{1}{C} \min \left( \frac{t^2}{M^4 \lVert A \rVert^2_{\mathrm{HS}}} , \left( \frac{t}{M^2 \lVert A \rVert_{\mathrm{op}}} \right)^{\beta/2} \right) \right).
	\end{equation*}
\end{proposition}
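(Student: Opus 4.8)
The statement is the generalised Hanson--Wright inequality of \cite{goetze2019concentration}; the case $\beta=2$ is exactly Proposition~\ref{HWI}, so the content lies in the sub-exponential range $\beta\in(0,1]$. The plan is to first establish, writing $Q=X^\top AX-\Eb X^\top AX$, the polynomial moment bound
\[
	\lVert Q\rVert_p \lesssim M^2\left(\sqrt p\,\lVert A\rVert_{\mathrm{HS}} + p^{2/\beta}\lVert A\rVert_{\mathrm{op}}\right),\qquad p\geq 2,
\]
and then to deduce the tail bound by applying Markov's inequality to $|Q|^p$: from $\Pb(|Q|\geq t)\leq(\lVert Q\rVert_p/t)^p$ one optimises over $p$, and the choice $p\asymp\min\{(t/(cM^2\lVert A\rVert_{\mathrm{HS}}))^2,\,(t/(cM^2\lVert A\rVert_{\mathrm{op}}))^{\beta/2}\}$ with a suitable constant $c$ produces precisely the claimed two-regime exponential bound (the range where this $p$ falls below $2$ being absorbed into the prefactor $2$). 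Throughout one uses the elementary equivalence, with constants depending only on $\beta$, between $\lVert X_i\rVert_{\psi_\beta}\leq M$ and $\lVert X_i\rVert_p\leq CMp^{1/\beta}$ for all $p\geq1$; in particular $\sigma_i^2=\Eb X_i^2\lesssim M^2$ and $\lVert X_i^2\rVert_p=\lVert X_i\rVert_{2p}^2\lesssim M^2p^{2/\beta}$.

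To prove the moment bound I would split $Q=\sum_i a_{ii}(X_i^2-\sigma_i^2)+\sum_{i\neq j}a_{ij}X_iX_j=:D+S$. The diagonal part $D$ is a sum of independent centred random variables with $\lVert a_{ii}(X_i^2-\sigma_i^2)\rVert_p\leq 2|a_{ii}|\lVert X_i^2\rVert_p\lesssim|a_{ii}|M^2p^{2/\beta}$, i.e.\ summands with bounded $\psi_{\beta/2}$-quasinorm; a Bernstein-type bound for sums of independent centred sub-exponential variables therefore gives $\lVert D\rVert_p\lesssim M^2\big(\sqrt p\,(\sum_i a_{ii}^2)^{1/2}+p^{2/\beta}\max_i|a_{ii}|\big)$, which is at most $M^2(\sqrt p\,\lVert A\rVert_{\mathrm{HS}}+p^{2/\beta}\lVert A\rVert_{\mathrm{op}})$ since $\sum_i a_{ii}^2\leq\lVert A\rVert_{\mathrm{HS}}^2$ and $\max_i|a_{ii}|\leq\rho(A)=\lVert A\rVert_{\mathrm{op}}$.

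The off-diagonal chaos $S$ is the real work, and the step I expect to be the main obstacle. The natural route is decoupling: with $X'$ an independent copy of $X$, the de la Pe\~na--Montgomery-Smith inequality gives $\lVert S\rVert_p\lesssim\lVert\sum_{i,j}a_{ij}X_iX_j'\rVert_p$ (up to the lower-order decoupled-diagonal correction $\lVert\sum_i a_{ii}X_iX_i'\rVert_p$, itself of the same order). Conditionally on $X'$ this is a linear form $\sum_i X_i(AX')_i$ in independent centred $\psi_\beta$ variables, so a sharp moment bound for such linear forms---a consequence of Lata\l{}a's moment comparison theorem for sums of independent random variables---yields $\lVert\sum_i X_i(AX')_i\mid X'\rVert_p\lesssim M\big(\sqrt p\,\lVert AX'\rVert_2+p^{1/\beta}\lVert AX'\rVert_\infty\big)$; integrating over $X'$ one uses $\Eb\lVert AX'\rVert_2^2=\sum_{i,j}a_{ij}^2\sigma_j^2\lesssim M^2\lVert A\rVert_{\mathrm{HS}}^2$ together with $\lVert(AX')_i\rVert_q=\lVert\sum_j a_{ij}X_j'\rVert_q\lesssim Mq^{1/\beta}\lVert A\rVert_{\mathrm{op}}$ for the $\ell^\infty$-term. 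The delicate point---and the reason \cite{goetze2019concentration} is a paper in its own right---is that estimating $\lVert AX'\rVert_\infty=\max_i|(AX')_i|$ by a naive union bound over the $n$ coordinates costs a spurious factor $(\log n)^{1/\beta}$, whereas the inequality as stated carries none; removing it requires the more careful multilevel-concentration and moment-comparison machinery (in the spirit of Adamczak--Wolff and Lata\l{}a), which yields the deterministic operator norm $\lVert A\rVert_{\mathrm{op}}$ with the correct power $p^{2/\beta}$ directly rather than through a coordinatewise maximum. Granting the resulting estimate $\lVert S\rVert_p\lesssim M^2(\sqrt p\,\lVert A\rVert_{\mathrm{HS}}+p^{2/\beta}\lVert A\rVert_{\mathrm{op}})$, adding the two contributions gives the displayed moment bound and hence, by the optimisation described above, the claimed tail bound.
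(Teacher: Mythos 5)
This statement is not proved in the paper at all: it is quoted verbatim as Proposition~1.1 of \cite{goetze2019concentration} and placed among the auxiliary results, so the paper's ``proof'' is the citation, and your attempt has to be judged as a reproof of that external result. Your overall architecture is the standard and correct shape for such inequalities: the reduction of the tail bound to the moment bound $\lVert Q\rVert_p \lesssim M^2(\sqrt p\,\lVert A\rVert_{\mathrm{HS}}+p^{2/\beta}\lVert A\rVert_{\mathrm{op}})$ via Markov and optimisation in $p$ is fine, the identification of the case $\beta=2$ with Proposition~\ref{HWI} is correct, and the diagonal estimate via a Bernstein-type bound for centred $\psi_{\beta/2}$ summands is sound.

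There is, however, a genuine gap: the off-diagonal chaos estimate, which is the entire content of the proposition for $\beta\in(0,1]$, is not established but ``granted''. You flag this yourself: the conditional Gluskin--Kwapie\'n/Lata\l{}a linear-form bound produces the term $M\,p^{1/\beta}\lVert AX'\rVert_\infty$, and integrating it out coordinatewise costs a factor $(\log n)^{1/\beta}$ (or forces $p\gtrsim\log n$), which the stated inequality does not carry; removing it is precisely the multilevel concentration and moment-comparison work of \cite{goetze2019concentration} (in the spirit of Adamczak--Wolff and Lata\l{}a), which you invoke rather than carry out. A second, unflagged soft spot is the Hilbert--Schmidt part after decoupling: you need $\lVert\,\lVert AX'\rVert_2\,\rVert_p$ (to pair with the factor $\sqrt p$), not merely $\Eb\lVert AX'\rVert_2^2$, and controlling the upper tail of $\lVert AX'\rVert_2$ is itself a concentration statement for a quadratic form in $\psi_\beta$ variables, so as written the argument is partially circular. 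In short, your sketch correctly lays out the road map and honestly localises the difficulty, but it does not constitute an independent proof; for this paper the citation is the intended justification, and a self-contained argument would have to import the machinery of the cited work at exactly the step you leave open.
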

 
\bigskip

\section*{Acknowledgements}

This research has been supported in part by the research grant DFG DE 502/27-1 of the German Research Foundation (DFG). 
\printbibliography

\end{document}